\documentclass{scrartcl}
\usepackage{amssymb,amsmath,amsthm}
\usepackage{hyperref,comment}
\usepackage{mathrsfs}
\usepackage{caption}
\usepackage{graphicx, float, subfig}
\usepackage{color, mathtools, tikz, xcolor}
\usepackage{enumerate,enumitem}
\usepackage[capitalise]{cleveref}
\usepackage[noadjust]{cite}
\usepackage[margin=1.1in]{geometry}

\hypersetup{colorlinks=true,
   citecolor=blue,
   filecolor=blue,
   linkcolor=blue,
   urlcolor=blue}

\usepackage{pgfplots}
\pgfplotsset{width=10cm,compat=1.9}

\usepgfplotslibrary{fillbetween}

\numberwithin{equation}{section}
\creflabelformat{equation}{(#2#1#3)}
\crefdefaultlabelformat{#2#1#3}
\Crefname{enumi}{}{}

\newcommand{\COMMENT}[1]{}

	\newtheorem{theorem}{Theorem}[section]

	\newtheorem{corollary}[theorem]{Corollary}
	
	\newtheorem{question}[theorem]{Question}
	\newtheorem{claim}[theorem]{Claim}
	\newtheorem{proposition}[theorem]{Proposition}
	\newtheorem{lemma}[theorem]{Lemma}
	\newtheorem{fact}[theorem]{Fact}
	\newtheorem{definition}[theorem]{Definition}

\newenvironment{proofclaim}[1][Proof of claim]{\begin{proof}[#1]}{\end{proof}}

\def\eps{{\varepsilon}}

\title{Compatible Hamilton cycles in graphs with large minimum degree}

\author{Natalie Behague\thanks{School of Mathematical Sciences, Dublin City University, Dublin D09 W6Y4, Ireland; e-mail: \texttt{natalie.behague@dcu.ie}.
} \and Francesco Di Braccio\thanks{Department of Mathematics, London School of Economics and Political Science, London WC2A 2AE, United Kingdom; e-mail: \texttt{f.di-braccio@lse.ac.uk}.} \and Bertille Granet\thanks{School of Mathematics, Monash University, Melbourne VIC 3800, Australia; e-mail: \texttt{bertille.granet@monash.edu}.} \and Allan Lo\thanks{School of Mathematics, University of Birmingham, Birmingham B15 2TT, United Kingdom; e-mail: \texttt{s.a.lo@bham.ac.uk}.}}

\date{}

\begin{document}

\maketitle

\begin{abstract}
The renowned theorem of Dirac states that if $G$ is a graph with minimum degree at least $n/2$ then $G$ has a Hamilton cycle. A natural generalisation asks what properties of an edge-colouring of $G$ guarantee the existence of a properly edge-coloured Hamilton cycle in $G$. This concept can be further generalised as follows: an \emph{incompatibility system} for $G$ is a set~$\mathcal{F}$ of `forbidden' pairs of adjacent edges, that is, $\mathcal{F}\subseteq \{\{uv,vw\}\in \binom{E(G)}2\}$. A cycle in $G$ is then \emph{compatible} if no two of its edges form a pair in $\mathcal{F}$.
The system $\mathcal{F}$ is called \emph{$\mu n$-bounded} if for all $v\in V(G)$ and $uv\in E(G)$, there are at most $\mu n$ pairs $\{uv,vw\}\in \mathcal{F}$.
How small must $\mu$ be to guarantee the existence of a compatible Hamilton cycle in $G$?
Krivelevich, Lee and Sudakov showed that $\mu=10^{-16}$ suffices (for $n$ large), while an example of Bollob\'as and Erd\H{o}s shows that $\mu\leq 1/4$ is necessary. We significantly reduce this gap for large graphs of minimum degree at least $(1/2+\varepsilon)n$, by showing that $\mu=1/8$ suffices but $\mu\leq 1/6$ is necessary for such graphs. In fact, we give more precise bounds which are functions of $\delta(G)/n$.
\end{abstract}

\section{Introduction}

\subsection{Background and results}

Let $G$ be a graph on $n \ge 3$ vertices with $\delta(G) \ge n/2$.
In 1952, Dirac famously proved that $G$ contains a Hamilton cycle. Since then, a well-studied strand of research consists in generalising this seminal result, in particular by finding Hamilton cycles satisfying specific properties.

A typical example of this is to colour the edges of $G$ and seek Hamilton cycles with specific colour patterns, such as \emph{rainbow} Hamilton cycles, where all edges have distinct colours.
For example, Albert, Frieze and Reed~\cite{AFR} showed that every edge-coloured~$K_n$ with no colour appearing more than $n/32$ times contains a rainbow Hamilton cycle. 
This was then strengthened by Coulson and Perarnau~\cite{CoulsonPerarnau}, who showed that if an edge-coloured graph $G$ with $\delta(G) \ge n/2$  has no colour appearing more than $\mu n$ times, for some sufficiently small $\mu>0$, then it contains a rainbow Hamilton cycle.

In this paper, we focus on local restrictions on Hamilton cycles. 
Typically one seeks \emph{properly} edge-coloured Hamilton cycles, where any two incident edges receive distinct colours, in edge-coloured (complete) graphs. Let $\Delta_{\mathrm{mono}}(G)$ be the maximum number of times a colour appears at a vertex. 
Equivalently, $\Delta_{\mathrm{mono}}(G) \coloneqq \max \{ \Delta(H) : H \subseteq G \text{ is monochromatic}\}$. 
The Bollab\'as--Erd\H{o}s conjecture~\cite{BollobasErdos} states that $\Delta_{\mathrm{mono}}(K_n) < \lfloor n/2 \rfloor$ suffices for the existence of a properly edge-coloured Hamilton cycle. 
Following various results~\cite{BollobasErdos, shearer, alongutin, chendaykin} toward this conjecture, it was then solved asymptotically by the last author~\cite{AsymptoticBollobasErdos}.

Here, we will generalise proper edge-colourings further using incompatibility systems.
Let $G$ be a graph.  
An \emph{incompatibility system}~$\mathcal{F}$ for~$G$ is a family of pairs of intersecting edges of~$G$, that is, $\mathcal{F} \subseteq \{\{e,e'\} \in E(G) \colon e \cap e' \ne \emptyset\}$.
Here, $\mathcal{F}$ is to be understood as the list of `forbidden' pairs of edges. 
We say that a subgraph $H$ in~$G$ is \emph{$\mathcal{F}$-compatible} if there do not exist $e,e' \in E(H)$ such that $\{e,e'\} \in \mathcal{F}$, that is, $\binom{E(H)}{2} \cap \mathcal{F} = \emptyset$. 
For example, consider an edge-coloured graph~$G$.
If $\mathcal{F}_{\textrm{mono}}$ consists of all pairs of intersecting edges of the same colour, then $\mathcal{F}_{\textrm{mono}}$-compatible is equivalent to properly edge-coloured.

Recall that  for all $\{e,e'\} \in \mathcal{F}$, we have $e \cap e' \ne \emptyset$. 
Thus $\mathcal{F}$ can be written as $\bigcup_{v \in V(G)} \mathcal{F}_v$, where $\mathcal{F}_v$ is the list of forbidden pairs of edges containing the vertex~$v$, that is, $\mathcal{F}_v \subseteq \{ \{e,e'\} : e \ne e' \in E(G), e \cap e' = \{v\} \}$.
We say an incompatibility system~$\mathcal{F}$ is \emph{$\Delta$-bounded} if for each $v \in V(G)$ and edge $e$ containing~$v$, there are at most $\Delta$ other edges $e'$ such that $\{e,e'\}\in \mathcal{F}_v$.
Hence $\mathcal{F}_{\textrm{mono}}$ is $\Delta_{\textrm{mono}}(G)$-bounded.

An additional motivation for the study of compatible Hamilton cycles in Dirac graphs is that they also generalise \emph{transition systems}, which are  equivalent to 1-bounded incompatibility systems. Motivated by a problem of Nash-Williams on cycle coverings of Eulerian graphs, Kotzig \cite{Kotzig} first introduced this concept in 1968.
H\"{a}ggkvist (see \cite[Conjecture 8.40]{bondy}) then
conjectured that if $G$ is a Dirac graph and $\mathcal{F}$ is a 1-bounded incompatibility system (a.k.a.~transition system) for $G$,
then $G$ contains an $\mathcal{F}$-compatible Hamilton cycle. 
The first major result on this topic is due to Krivelevich, Lee and Sudakov~\cite{KLS}, who proved H\"{a}ggkvist's conjecture in a strong sense.

\begin{theorem}[Krivelevich, Lee and Sudakov~\cite{KLS}]\label{thm:KLS}
For $n$ sufficiently large, every $n$-vertex graph~$G$ with $\delta(G) \ge n/2$ with a $10^{-16} n$-bounded incompatibility system~$\mathcal{F}$ has an $\mathcal{F}$-compatible Hamilton cycle.
\end{theorem}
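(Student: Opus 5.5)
The plan follows the Pósa rotation–extension strategy, combined with a random-partition and absorption-free reasoning on the robustness of Dirac graphs. Set $\mu=10^{-16}$ and let $\mathcal{F}$ be $\mu n$-bounded.

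\textbf{Case split.} First I would reduce to a structural dichotomy. Either $G$ is close to one of the two extremal Dirac configurations, namely two cliques of order about $n/2$ or the complete bipartite graph $K_{n/2,n/2}$, or $G$ is a \emph{robust expander}. Robust expansion means that for every $S$ with $\eps n\le |S|\le (1-\eps)n$, at least $|S|+\nu n$ vertices have $\nu n$ neighbours in $S$.

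\textbf{Non-extremal case.} In this case I would construct the cycle greedily with rotations.

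\begin{enumerate}
\item Take a longest $\mathcal{F}$-compatible path $P$. Note that at each endpoint $v$, the edge of $P$ at $v$ forbids at most $\mu n$ of its at least $n/2$ incident edges.
\item Perform Pósa rotations that preserve compatibility. Rotating with a pivot $x$ replaces one edge $xy$ by an edge $vx$ and creates at most two new adjacencies at $x$ and $v$. Each such adjacency kills at most $O(\mu n)$ choices.
\item Consequently, the standard argument shows that the set of reachable endpoints still expands. This gives linearly many endpoints, each with linearly many compatible closing options.
\item Robust expansion then yields a compatible cycle of the same length.
\item Connectivity together with minimum degree about $n/2$ lets me extend this cycle to a longer compatible path, which is a contradiction unless the path is Hamiltonian.
\end{enumerate}

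\textbf{Avoiding repeated rotations.} To prevent the losses from accumulating over many rotations, I would fix in advance a random reservoir set $R$ of size $\gamma n$. I would use rotations only through the typical neighbourhoods, where every vertex still has about $|R|/2$ neighbours in $R$. A union bound keeps the forbidden-pair density inside $R$ small.

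\textbf{Extremal cases.} These are handled directly.
\begin{itemize}
\item \emph{Near two cliques.} Here $\delta\ge n/2$ forces at least two disjoint edges between the parts, or a cut vertex structure compatible with Hamiltonicity. I would pick two such crossing edges whose incident forbidden pairs are avoidable, which is possible since each kills only $\mu n$ options. I would then find compatible Hamilton paths inside each nearly complete part between prescribed endpoints. Inside a dense part this is easy, for example by a random greedy construction plus a small absorbing fix, because almost-complete graphs with a $\mu n$-bounded system are very robust.
\item \emph{Near $K_{n/2,n/2}$.} Here I would first balance the parts by covering the excess vertices with short compatible paths inside the larger side. I would then find a compatible Hamilton cycle in the balanced almost-complete bipartite graph in the same way.
\end{itemize}

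\textbf{Main obstacle.} I expect the main difficulty to be the compatible rotation-extension step in the non-extremal case. Each rotation changes the endpoint edge, so the set of forbidden closing edges changes, and it must be shown that the expansion property survives these changes. My first attempt would be to bound, for each candidate endpoint, the number of neighbours lost to $\mathcal{F}$ by $2\mu n$. I would then apply the robust expansion with parameter $\nu\gg\mu$ to absorb this loss.
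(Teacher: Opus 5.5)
The paper does not prove this theorem; it quotes it from Krivelevich, Lee and Sudakov. The one remark it makes about that proof identifies exactly where your plan breaks: \emph{a compatible path need not have a single rotation.}

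\textbf{The rotation step fails at the pivot.} Your step 2 says each new adjacency created by a rotation kills at most $O(\mu n)$ choices. That is true only at the endpoint. Take $P=v_1v_2\dots v_\ell$ and rotate with $v_1v_i$ to get $v_{i-1}\dots v_1v_iv_{i+1}\dots v_\ell$. Two conditions are needed.
\begin{itemize}
\item $\{v_1v_2,v_1v_i\}\notin\mathcal{F}_{v_1}$. This excludes at most $\mu n$ values of $i$, as you say.
\item $\{v_1v_i,v_iv_{i+1}\}\notin\mathcal{F}_{v_i}$. These pairs sit at different vertices and involve a different path edge $v_iv_{i+1}$ for each $i$.
\end{itemize}
Boundedness gives no control over how many pairs of the second kind are forbidden. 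Forbidding all of them uses one pair per vertex, so even a $1$-bounded system allows it. Nothing in your argument stops the longest compatible path from having no rotation at all, so step 3 (expansion of the endpoint set) does not follow.

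The random reservoir does not help, since it only controls densities, not this pivot-dependent configuration. Your proposed fix in the last paragraph, bounding the loss at each candidate endpoint by $2\mu n$, also misses the point. The loss occurs at the pivot, where it is not bounded.

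\textbf{The extension step fails near the end.} To attach an outside vertex $w$ to the cycle $C$ at $x$, you need $\{wx,xx^{+}\}\notin\mathcal{F}_x$ or $\{wx,xx^{-}\}\notin\mathcal{F}_x$. Suppose at most $\mu n$ vertices lie outside $C$. Then a $\mu n$-bounded system can forbid all these pairs at once: each cycle edge $xx^{\pm}$ is then incompatible at $x$ with at most $n-|V(C)|\le\mu n$ edges, and each $wx$ with only two. So simple extension cannot pick up the last $\mu n$ vertices, even given a near-spanning compatible cycle.

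\textbf{What is missing.} You need a new ingredient for the non-extremal case. Krivelevich, Lee and Sudakov restrict to compatible paths with extra properties that guarantee rotations survive. This paper drops rotations entirely in favour of absorption: a connecting lemma via iterated auxiliary digraphs, absorbing paths, and an almost spanning compatible linear forest. That route only works for $\delta(G)\ge(1/2+\varepsilon)n$, so it would not by itself give the exact $n/2$ statement; you would still need your extremal-case analysis, which looks plausible in outline.
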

 
They also mentioned that the constant $10^{-16}$ has not been optimized.
This gives rise to the following natural question, also included in a survey of Sudakov~\cite{sudakov2017robustness}.
\begin{question}[Krivelevich, Lee and Sudakov~\cite{KLS,sudakov2017robustness}]\label{q:main}
What is the largest constant $\mu$ such that every $n$-vertex graph~$G$ with $\delta(G) \ge n/2 $ and a $\mu n$-bounded incompatibility system~$\mathcal{F}$ has an $\mathcal{F}$-compatible Hamilton cycle?
\end{question} 

We know that $10^{-16} < \mu \le 1/4$, where the lower bound is due to~\cref{thm:KLS}, and the upper bound is due to the following simple example based on a construction of Bollob\'{a}s and Erd\H{o}s~\cite{BollobasErdos}, see~\cite{KLS}.
Consider a red-blue coloured $n$-vertex graph~$G$ such that $n = 8k-1$ and $G$ is the union of two edge-disjoint spanning $2k$-regular graphs with one of each colour.
Since $G$ is $2$-edge-coloured, $G$ contains no properly coloured odd cycle, and so no properly coloured Hamilton cycle.
By defining $\mathcal{F}_{\textrm{mono}}$ as above, we deduce that $\mathcal{F}_{\textrm{mono}}$ is $(2k-1)$-bounded
and $G$ does not have an $\mathcal{F}_{\textrm{mono}}$-compatible Hamilton cycle.

In this paper, we consider $n$-vertex graphs~$G$ with $\delta(G) \ge (1/2+ \eps) n$ and improve the bounds on $\mu$ significantly to $ 1/8 \le \mu \le 1/6$. 
In fact, our bounds on $\mu$ are functions of $\delta(G)/n$. To that end, we introduce the following definition.
\begin{definition}\label{def:mu}
    For $\delta \ge 1/2$, let $\mu(\delta)$ be the supremum over all $\mu$ for which the following holds: for all $\rho>0$, there exists $n_0 = n_0(\delta, \rho)$ such that every $n$-vertex graph~$G$ with $n \ge n_0$, $\delta(G) \ge\delta n $ and a $(1- \rho)\mu n$-bounded incompatibility system has a compatible Hamilton cycle. 
\end{definition}

When $n$ is odd, a straightforward generalisation of the aforementioned example of Bollob\'as and Erd\H{o}s shows that $\mu(\delta) \le \delta/2$.
We provide another simple construction which covers all values of $n$, which can also be adapted to the corresponding problem for properly coloured Hamilton cycles.

\begin{proposition}\label{prop:upper_bound_reg}
Let $d\in \mathbb{N}$. 
Then for any $d$-regular graph~$G$, there exists a $\lceil d/2 \rceil$-bounded incompatibility system~$\mathcal{F}$ such that $G$ has no $\mathcal{F}$-compatible Hamilton cycle.
Moreover, there exists an edge-colouring of $G$ such that $\Delta_{\textrm{mono}}(G) \le \lceil d/2 \rceil+1$ without a properly coloured Hamilton cycle. 
\end{proposition}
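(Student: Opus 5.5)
The plan is to encode a parity obstruction into local ``half'' labels. At every vertex $w$ we partition the edges at $w$ into two classes $L_w, R_w$, and we forbid every pair of edges lying in the same class at $w$. An edge in a class of size $s$ is then in conflict with $s-1$ other edges. So if every class has size at most $\lceil d/2\rceil+1$, the system $\mathcal{F}$ is $\lceil d/2\rceil$-bounded. A compatible Hamilton cycle must use, at every vertex, exactly one edge from $L_w$ and one from $R_w$.

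Next we count parity around a putative compatible Hamilton cycle $C=x_1x_2\dots x_nx_1$. Call an edge $xy$ \emph{standard} if it lies in different classes at its two endpoints (one in $L$, one in $R$), and \emph{nonstandard} otherwise. Walk around $C$ and record the class ($L$ or $R$) of the current edge as seen at the current vertex.
\begin{itemize}
\item At each vertex the class switches, since $C$ uses one edge of each class there. This gives $n$ switches.
\item Along each edge the class switches exactly when the edge is standard.
\end{itemize}
Returning to the start requires the total number of switches, $n+(n-k)$, to be even, where $k$ is the number of nonstandard edges of $C$. Hence $k$ must be even. It therefore suffices to build the labelling so that every Hamilton cycle contains exactly one nonstandard edge.

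To build it, take an orientation of $G$ in which every vertex has in- and out-degree in $\{\lfloor d/2\rfloor,\lceil d/2\rceil\}$.
\begin{itemize}
\item If $d$ is even, take an Eulerian orientation of each component.
\item If $d$ is odd, then $n$ is even. Add an auxiliary perfect matching on $V(G)$ (as new, possibly parallel, edges), take an Eulerian orientation of the resulting even-degree multigraph, and delete the auxiliary edges.
\end{itemize}
Set $L_w$ to be the in-edges at $w$ and $R_w$ the out-edges; now every edge is standard. Fix a vertex $v$. For each edge $vu\in L_v$, move $vu$ to the other class at $u$, making it nonstandard. All other edges stay standard. A Hamilton cycle uses exactly one edge of $L_v$, so it contains exactly one nonstandard edge, contradicting the parity count. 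It remains to check class sizes. Each neighbour $u$ of $v$ has at most one edge moved, so its classes have size at most $\lceil d/2\rceil+1$, as required. Since $G$ is simple there are no multi-edges to worry about.

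For the colouring statement, I would try to realise the same obstruction with colours. For $n$ odd this is easy: colour in-edges and out-edges with two colours coming from a near-balanced red/blue decomposition (again via the Euler orientation). Each colour then has degree at most $\lceil d/2\rceil+1$ at each vertex, and parity forbids a properly coloured odd cycle. The main obstacle is $n$ even, where the labels at different endpoints must now be consistent colours. I would first try colouring edges red or blue according to a suitable $2$-factorisation-like split. I would then recolour the edges of a star at $v$, say those in $L_v$, with a third colour, arranged so that any properly coloured Hamilton cycle uses exactly one such edge and the alternation argument again yields an odd count. The extra $+1$ in the bound absorbs the resulting imbalance at the neighbours of $v$. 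Checking that the third colour creates no new escape is the step I expect to require the most care.
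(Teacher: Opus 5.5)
Your construction of $\mathcal{F}$ is correct and is essentially the paper's. The paper also takes a near-balanced orientation, reorients every edge at a fixed vertex $x$ away from $x$, and forbids same-direction pairs at every $u\neq x$. It then argues that $x$ is a source rather than using your parity count. Your auxiliary-matching orientation for odd $d$ is a nice touch, since it avoids the paper's reduction to $G$ being Hamiltonian.

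The gap is in the colouring (``moreover'') part. For $n$ odd your near-balanced red/blue colouring works, but it comes from alternately colouring an Euler circuit. It does not come from ``colouring in-edges and out-edges'', since every edge is an in-edge at one end and an out-edge at the other. For $n$ even you give no construction, and the proposed repair produces no obstruction. If the third colour lives only on a star at $v$, a properly coloured Hamilton cycle uses at most one edge of that colour. If it uses none, it is an alternating red/blue Hamilton cycle, which $n$ even does not forbid. If it uses exactly one, the other $n-1$ edges form an alternating red/blue Hamilton path, and an alternating path can have any length. The only leverage is the colours of that path's first and last edges, at $v$ and at the star neighbour. Exploiting it would force, e.g., all non-star edges at the star neighbours to be blue, pushing monochromatic degrees to about $d$. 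The root problem is that a colour is the same at both ends of an edge, whereas your in/out labels differ at the two ends of a standard edge. Two or three colours cannot encode that asymmetry.

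The missing idea is to use many colours: colour each edge by its head in the flipped orientation $H$, in which $v$ is a source. Each colour class is then an in-star, so $\Delta_{\mathrm{mono}}(G)\le \max_u d^-_H(u)\le \lceil d/2\rceil+1$. Two edges with head $u$ share $u$ and both have colour $u$, so a properly coloured cycle contains at most one edge with head $u$ for each $u$. A Hamilton cycle has $n$ edges and so needs $n$ distinct heads, but $v$ is the head of no edge of $H$, leaving only $n-1$ candidates. This contradiction holds whatever the parity of $n$, so the case split and the red/blue scheme are unnecessary.
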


Perhaps surprisingly, we give a new construction showing that $\mu(\delta) \le \delta -1/3$. 
Again, there is an edge-coloured analogue of this construction.

\begin{proposition}\label{prop:upper_bound_nonreg}
    Let $n \in \mathbb{N}$ and $1/2 \le \delta < 1$.
    There exists an $n$-vertex graph~$G$ with minimum degree $\delta(G) \ge \delta n$ and a $(\delta n -n/3 + 1)$-bounded incompatibility system~$\mathcal{F}$ such that $G$ has no $\mathcal{F}$-compatible Hamilton cycle.
    Moreover, there exists an edge-coloured $n$-vertex graph~$G$ with minimum degree $\delta(G) \ge \delta n$ and  $\Delta_{\textrm{mono}}(G) \le \delta n -n/3 + 2$ without a properly coloured Hamilton cycle. 
\end{proposition}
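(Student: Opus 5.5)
The idea is to force any Hamilton cycle to spend too much time inside a large part $B$, and then use the incompatibility system to stop it from doing so. Partition the vertex set as $V=A\cup B$ with $|A|=\lceil n/3\rceil-1$, so that $|B|=n-|A|>2|A|$. Let $G[A]$ be complete and let $G[A,B]$ be complete bipartite. Inside $B$, place a graph $H$ with minimum degree $d\coloneqq\lceil\delta n\rceil-|A|$ and maximum degree at most $d+1$. Such an $H$ exists because $d\le |B|-1$: take a $d$-regular graph on $|B|$ vertices if $d|B|$ is even, and otherwise add a matching to a $(d-1)$-regular graph to fix parity. (If $d\le 0$ we do not need $H$.) The degrees in $G$ are then as follows.
\begin{itemize}
\item A vertex of $A$ has degree $n-1\ge\delta n$.
\item A vertex of $B$ has degree at least $|A|+d\ge\delta n$.
\end{itemize}
So $\delta(G)\ge\delta n$.

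Let $\mathcal{F}\coloneqq\bigcup_{v\in B}\mathcal{F}_v$, where $\mathcal{F}_v$ consists of all pairs $\{vb,vb'\}$ with $b,b'\in N_H(v)$ distinct. Put no forbidden pairs at vertices of $A$, and none at $v\in B$ involving an edge to $A$. We check that $\mathcal{F}$ is bounded as required.
\begin{itemize}
\item An edge $vb$ with $b\in B$ lies in at most $d_H(v)-1\le d\le\delta n-n/3+1$ pairs of $\mathcal{F}_v$, using $|A|\ge n/3-1$.
\item All other edge–vertex incidences lie in no forbidden pairs.
\end{itemize}
So $\mathcal{F}$ is $(\delta n-n/3+1)$-bounded.

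Now suppose $C$ is an $\mathcal{F}$-compatible Hamilton cycle. No vertex of $B$ has both its cycle edges inside $B$, so every maximal subpath of $C$ inside $B$ has at most two vertices. Each such segment is attached to $A$ by exactly two edges of $C$ (note $A\ne\emptyset$ for $n\ge 6$, say). There are at most $2|A|$ $A$–$B$ edges on $C$, so there are at most $|A|$ segments. Hence $|B|\le 2|A|$, contradicting $|B|>2|A|$.

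For the coloured version, use the same graph. Colour every edge of $H$ red and give every other edge its own distinct new colour. A properly coloured Hamilton cycle never uses two red edges at a vertex of $B$, so it is exactly an $\mathcal{F}$-compatible one, and hence none exists. Moreover $\Delta_{\mathrm{mono}}(G)=\Delta(H)\le d+1\le\delta n-n/3+2$.

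The main delicate points are small-$n$ and rounding issues: checking $d\le|B|-1$ and the existence of the near-regular $H$. These reduce to $\delta<1$ together with routine floor and ceiling bookkeeping.
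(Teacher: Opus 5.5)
Your construction is essentially the paper's with the names of the two parts swapped: a set of size just under $n/3$ is joined to everything, and the rest carries a (near-)regular graph all of whose adjacent edge pairs are forbidden, so that $|B|>2|A|$ forces two consecutive forbidden edges. The only real difference is parity: you use a near-regular $H$, which costs $+1$ in the bound but is absorbed by taking $|A|\ge n/3-1$, whereas the paper takes the large part of even size so that an exactly regular graph exists. One caveat: your step $d=\lceil\delta n\rceil-|A|\le\delta n-n/3+1$ tacitly assumes $\delta n\in\mathbb{Z}$, and it fails without it (e.g.\ $n=30$, $\delta n=15.3$ lands in your near-regular case with bound $7>6.3$). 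The paper's proof makes the same assumption, since it writes $G[A]$ as $(\delta n-(n-a))$-regular, so this is a shared convention rather than a gap in your argument.
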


The main result of this paper is a lower bound on $\mu(\delta)$. 

\begin{theorem}\label{thm:main}
     For any $\rho> 0$ and $\delta >1/2$, there exists $n_0\in \mathbb{N}$ such that the following holds.
Let $G$ be an $n$-vertex graph with $n\geq n_0$ and $\delta(G) \ge \delta n$. 
If $\mathcal{F}$ is a $\mu n$-bounded incompatibility system for $G$ with
\begin{align*}
\mu\coloneqq (1-\rho)\frac{\delta + \sqrt{2\delta - 1}}{4},
\end{align*}
then $G$ contains an $\mathcal{F}$-compatible Hamilton cycle.
\end{theorem}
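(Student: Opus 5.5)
We will follow the absorption strategy used by Krivelevich, Lee and Sudakov, but track constants carefully so that the threshold becomes $(\delta+\sqrt{2\delta-1})/4$. The argument has three parts.

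\textbf{Absorbing structure.} First we build a short compatible absorbing path $P_{\mathrm{abs}}$, covering at most $\eps n$ vertices. We also fix a small random reservoir $R$ such that every pair of vertices can be joined by many short compatible paths through $R$. Since $\delta>1/2$, any two vertices have at least $(2\delta-1)n$ common neighbours. A uniformly random common neighbour avoids the forbidden transitions at both ends except with probability $O(\mu n/((2\delta-1)n))$. This is not small when $\mu$ is a constant, so instead we use paths of length $3$ to $5$. The counting uses the fact that in an edge $uv$ only $\mu n$ continuations at each endpoint are forbidden, while the available continuations number at least $\delta n$.

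\textbf{Compatible almost-spanning structure.} Next we cover almost all of $V(G)\setminus(R\cup V(P_{\mathrm{abs}}))$ with few long compatible paths. For this we apply the regularity lemma to $G$ and get a reduced graph $\Gamma$ with $\delta(\Gamma)\ge(\delta-\eps)|\Gamma|$. In $\Gamma$ we find an almost perfect fractional (or integral) matching, which we convert into long paths inside regular pairs. Inside an $\eps$-regular pair of density $d$, a greedy or random walk embedding of a path keeps compatibility as long as the number of forbidden continuations at each step, at most $\mu n$, is smaller than the typical number of allowed neighbours. This forces us to work with dense pairs and cluster structures rather than arbitrary edges.

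\textbf{Connecting.} The main obstacle is joining the $O(1)$ path pieces, plus $P_{\mathrm{abs}}$, into one cycle while keeping compatibility at the junctions. This is exactly where the value $(\delta+\sqrt{2\delta-1})/4$ must come from. Let the endpoint of one path be $x$, with last edge $x'x$, and let the start of the next path be $y$, with first edge $yy'$. The edge $x'x$ forbids at most $\mu n$ edges at $x$, and $yy'$ forbids at most $\mu n$ edges at $y$. We connect $x$ and $y$ through a short path inside the reservoir and must ensure a compatible choice exists.

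\begin{itemize}
\item \emph{Obstruction structure.} We consider the graph $H_x$ of allowed neighbours of $x$, of size at least $(\delta-\mu)n$, and the analogous $H_y$. A pair $(a,b)$ with $a\in H_x$, $b\in H_y$ and $ab\in E$ works as a link $x\,a\,b\,y$ unless $ab$ is forbidden at $a$ with $xa$, or at $b$ with $by$. Each such constraint kills at most $\mu n$ partners per vertex.
\item \emph{Double counting.} Counting edges between $H_x$ and $H_y$ via the degree bound (each vertex has at least $\delta n$ neighbours, of which at least $(2\delta-1)n$ are shared with any other vertex) should give a quadratic inequality in $\mu$.
\item \emph{Threshold.} A good link exists unless $4\mu^2-2\delta\mu\cdot 2+\dots\le 0$ fails, which is solved by $\mu<(\delta+\sqrt{2\delta-1})/4$. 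Working this quadratic out exactly, with the right pairs counted, is the step I expect to be hardest.
\end{itemize}

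If length-$3$ links are insufficient, the first thing I would try is a flexible choice of the endpoints themselves: rotate the last few vertices of each path so that the endpoint's forbidden set can be chosen favourably. With the cycle through all pieces formed, the leftover vertices are absorbed into $P_{\mathrm{abs}}$, which yields the compatible Hamilton cycle.
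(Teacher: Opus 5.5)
Your proposal has a genuine gap, and it puts the threshold in the wrong place. Connecting and absorbing are not the bottleneck: the paper's connecting lemma (\cref{lem:connecting}) and absorbing lemma (\cref{lem:absorbing}) work for every $\mu<\delta/2$. That is strictly larger than $(\delta+\sqrt{2\delta-1})/4$ for $\delta<1$, since $\sqrt{2\delta-1}<\delta$. Your quadratic ``$4\mu^2-2\delta\mu\cdot 2+\dots\le 0$'' is not derived from any count; it appears to be reverse-engineered from the answer. Short links also cannot reach the claimed range. For a link $x\,a\,b\,y$, the allowed neighbours of $a$ (given $xa$) and the allowed neighbours of $y$ (given $yy'$) each have size only about $(\delta-\mu)n$. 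So the naive count does not even produce a candidate $b$ unless $\mu<\delta-1/2$, which tends to $0$ as $\delta\to1/2$. The paper instead uses paths of large constant length. These come from an iterated expansion argument: digraphs $D_i^{uv}$ and sets $V_i\subseteq U_i$ are grown until $|V_t^{uv}|\ge(1/2+\eps/2)n$, so the sets for the two end-edges must meet. Your ``rotation'' fallback is not available either: as noted in the introduction, a compatible path may admit no compatible rotation at all.

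The real missing idea is in the covering step, which you treat as routine.

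\begin{itemize}
\item \emph{Why your plan fails.} In a regularity partition the clusters have size about $n/k$. A single edge can forbid up to $\mu n\gg n/k$ continuations at its endpoint, so $\mathcal{F}$ can kill all of a vertex's neighbours in any cluster you want to walk into. $\eps$-regularity gives no control over $\mathcal{F}$. Greedy extension fails for the same reason once the unused vertices are exactly the forbidden ones.
\item \emph{Where the threshold actually enters.} The paper passes to a spanning $d'n$-regular subgraph with $d'\approx(\delta+\sqrt{2\delta-1})/2$, using the Christofides--K\"uhn--Osthus theorem, so that $\mu<d'/2$. This inequality makes the compatibility graph $H_v$ on $N(v)$ a Dirac-type graph for every $v$. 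Hence $H_v$ also has a regular spanning subgraph, which lets one enlarge $\mathcal{F}$ to a \emph{regular} incompatibility system (\cref{prop:regF}).
\item \emph{How the linear forest is then built.} With both $G$ and $\mathcal{F}$ regular, one randomly partitions the vertex set into parts. The odd-indexed matchings are chosen independently and uniformly from K\"onig decompositions. McDiarmid's inequality then shows that each graph $G_i^*$ of edges compatible with $M'_{i-1}$ and $M'_{i+1}$ is almost regular. So each $G_i^*$ has an almost perfect matching, giving a compatible linear forest with $o(n)$ paths.
\end{itemize}

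Without an argument of this kind (regularising $G$ and $\mathcal{F}$, and decoupling the choices of alternate matchings), your plan neither produces the almost-spanning compatible structure nor explains the value $(\delta+\sqrt{2\delta-1})/4$.
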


In summary, for $1/2<\delta<1$ we have 
\begin{align*} 
\frac{\delta + \sqrt{2\delta - 1}}{4} \le 
\mu (\delta) \le \min\left\{\frac{\delta}{2},~\delta - \frac{1}{3}\right\}.
\end{align*}
This bound is sharp when $\delta$ tends to~$1$ and more surprisingly, when $\delta =5/9$. See Figure~\ref{fig:feasible_mu} for diagram of the a feasible region for~$\mu(\delta)$.

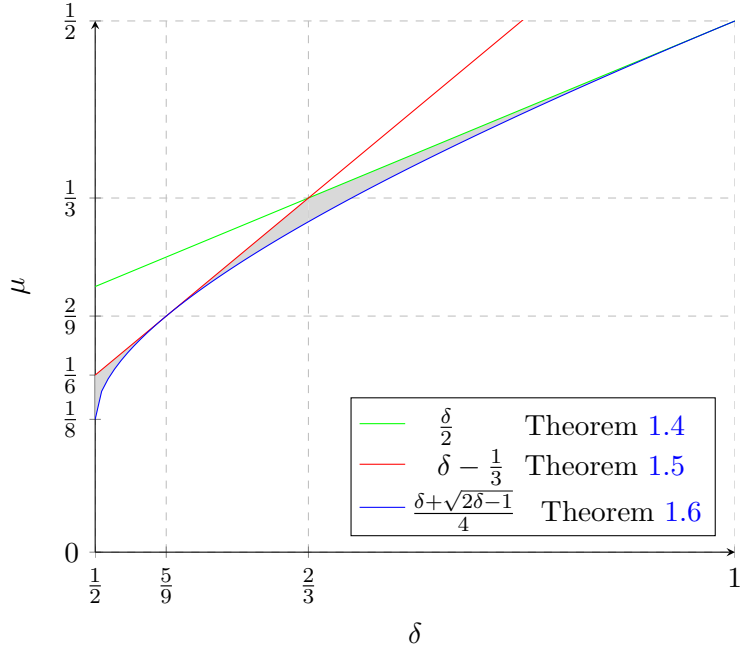
\begin{figure}[t]
    \centering
\begin{tikzpicture}
\begin{axis}[
    axis lines = left,
    xlabel = \(\delta\),
    ylabel = {\(\mu\)},
    xmin=1/2, xmax=1,
    ymin=0, ymax=.5,
    xtick={1/2,5/9,2/3,1},
xticklabels={$\frac{1}{2}$, $\frac{5}{9}$, $\frac{2}{3}$, 1},
    ytick={0,2/9,1/3,.5},
    yticklabels={0,$\frac{2}{9}$,$\frac{1}{3}$,$\frac{1}{2}$},
    extra y ticks={1/8, 1/6},
extra y tick style={grid=none},
extra y tick labels={$\frac{1}{8}$,$\frac{1}{6}$},
    ymajorgrids=true,
    xmajorgrids=true,
    grid style=dashed,
    legend pos=south east,
]
\addplot [ name path = red,
    domain=1/2:1, 
    samples=100, 
    color=green,
]
{x/2};
\addlegendentry{~$\frac{\delta}{2}$ \hspace{.655cm} \cref{prop:upper_bound_reg}}
\addplot [ name path = green,
    domain=1/2:1, 
    samples=100, 
    color=red,
    ]
    {x-1/3};
\addlegendentry{~$\delta - \frac{1}{3}$ \hspace{0cm} \cref{prop:upper_bound_nonreg}}
\addplot [ name path = blue,
    domain=.5:1, 
    samples=100, 
    color=blue,
    ]
    {(x+sqrt(2*x-1))/4};
\addlegendentry{$\frac{\delta + \sqrt{2\delta - 1}}{4}$ \hspace{.07cm} \cref{thm:main}}
\addplot[black!15] fill between[of=blue and green,
 soft clip = {domain=.5:2/3}, 
];
\addplot[black!15] fill between[of=blue and red,
 soft clip = {domain=2/3:1}, 
];
\end{axis}
\end{tikzpicture}
    \caption{The bounds on $\mu\coloneqq \mu(\delta)$. The feasible region is shaded.}
    \label{fig:feasible_mu}
\end{figure}

The lower bound on $\mu(\delta)$ arises from the fact that in our proof of~\cref{thm:main}, we would need to find a spanning $\delta' n$-regular subgraph of~$G$ with $2 \mu < \delta'$.
By Tutte's $f$-factor theorem~\cite{Tutte} (or see discussion in Section~\ref{sec:regularising}),  every $n$-vertex graph with $\delta(G) = \delta n \ge n/2$ contains a $2 \ell$-regular spanning subgraph with $\ell = \left\lfloor (\delta + \sqrt{2\delta - 1})n/4\right\rfloor$.
Therefore, it is of no surprise that we also prove a regular graph version of~\cref{thm:main}. In fact, we prove a more general statement that holds for almost regular graphs. \cref{prop:upper_bound_reg} implies that this theorem is asymptotically sharp. 

\begin{theorem}[Almost regular graphs]\label{thm:reg}
    For any $\rho,\eps > 0$, there exist $\gamma>0$ and $n_0\in \mathbb{N}$ such that the following holds.
		Let $G$ be an $n$-vertex graphs such that $n \ge n_0$ and $(d - \gamma) n \le \delta (G) \le \Delta(G) \le (d+\gamma) n$ for some $ d \ge 1/2+\eps$. 
		If $\mathcal{F}$ is a $\mu n$-bounded incompatibility system for~$G$ with $\mu\coloneqq (1-\rho)d/2$, then $G$ contains an $\mathcal{F}$-compatible Hamilton cycle.
\end{theorem}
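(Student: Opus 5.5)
We propose to prove \cref{thm:reg} by the absorption method, run on a random partition of $G$. The key parameter is the ratio between forbidden pairs and degree: each edge $uv$ at $v$ is incompatible with at most $\mu n\approx(1-\rho)\frac{d}{2}n$ edges at $v$. This is less than half of the roughly $dn$ edges at $v$. Hence for a typical vertex $v$ and a random pair of edges $e,e'$ at $v$, the probability that $\{e,e'\}\in\mathcal{F}_v$ is at most about $(1-\rho)/2$. We exploit this in the following steps.

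\textbf{Step 1 (preprocessing and compatible neighbourhoods).} For each vertex $v$ and each edge $uv$, call $w\in N(v)$ \emph{good for $uv$} if $\{uv,vw\}\notin\mathcal{F}_v$. Every edge has at least $(d-\gamma-\mu)n\ge(\frac d2+\frac{\rho d}{2}-\gamma)n$ good continuations. This exceeds half the degree by a margin linear in $n$, and since $d>1/2+\eps$ it also exceeds $n/4$ by a linear margin.

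We then build an auxiliary structure. Look at the directed ``transition'' structure: a path $u v w$ is \emph{admissible} if it is compatible. We want a connecting lemma: for any two edges $x_1x_2$ and $y_1y_2$ (oriented ends of paths), there are many short compatible paths from $x_2$ to $y_1$, of length about $3$ or $4$, whose first and last transitions are compatible with the given edges. To prove it, count walks $x_2\,a\,b\,y_1$. The choice of $a$ uses the good set for $x_1x_2$, of size greater than $(\frac d2)n$. Symmetrically, $b$ ranges over the good set for $y_2y_1$. We then need edges $ab$ that are compatible at both $a$ and $b$. Using almost-regularity and the fact that each side loses strictly less than half, a double-counting argument shows that linearly many edges between the two good sets survive. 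Here $d>1/2$ gives $e(A,B)$ large, and each incompatibility removes at most a $(1-\rho)/2$ fraction per endpoint. Almost-regularity is exactly what makes ``fraction of the degree'' uniform, and this is where the constant $d/2$ enters.

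\textbf{Step 2 (absorbers and reservoir).} Take a random reservoir $R$ of size $\beta n$; by Chernoff, the Step 1 properties are inherited inside $R$. Build a compatible absorbing path $P_{\mathrm{abs}}$ capable of absorbing any small set of vertices. For a vertex $v$, the absorber is an edge $ab$ on the path, with $a,v,b$ such that $avb$ is compatible and the transitions at $a$ and $b$ remain compatible after the path's neighbours are adjusted. The number of such gadgets per $v$ is $\Omega(n^{k})$, again by counting with the more-than-half margin. We then chain the gadgets together using the connecting lemma through $R$.

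\textbf{Step 3 (almost cover and closing).} In $G-V(P_{\mathrm{abs}})-R$, cover all but $o(n)$ vertices by few compatible paths. One option is to take a random-like near-perfect path cover, for instance via a regularity-based argument or by finding an almost-spanning linear forest of long paths in a random subgraph. Compatibility can be ensured by deleting incompatible transitions, which only costs a $(1-\rho)/2$ fraction at each vertex; with a Lovász Local Lemma or a nibble, an almost-spanning compatible linear forest exists. Next, connect the paths and the absorber into a cycle via $R$ using the connecting lemma, absorbing the leftover vertices along the way. Finally, absorb the leftover and unused reservoir vertices with $P_{\mathrm{abs}}$.

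\textbf{Main obstacle.} The main obstacle is the connecting lemma with the sharp constant $d/2$: showing that between two good sets, each of size just over $\frac d2 n$, there are many edges compatible at both ends. I would first try weighting by the regularity of $G$. The total number of pairs (edge $ab$, transition at $a$) that are forbidden is at most $(1-\rho)/2$ times the total, and similarly at $b$. A union bound then leaves a $\rho$ fraction of the $\Omega(n^2)$ edges between $A$ and $B$. If this fails for a bad choice of $A,B$, the fallback is to use longer paths, which lets the good sets expand to almost all of $V$.
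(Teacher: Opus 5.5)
\textbf{The main gap is Step~3.} This is where the real difficulty of the problem lies, and your sketch there does not work.

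\emph{What Step~3 must deliver.} The absorber can handle only a linear forest with at most $\beta n$ components for a tiny $\beta$. The same holds for any reservoir-based connecting step, since each connection costs up to $\ell$ vertices. So you need an almost spanning compatible linear forest with $o(n)$ paths.

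\emph{Why your suggestions do not give it.}
\begin{itemize}
\item Take any random-like path system, for example random near-perfect matchings between consecutive parts of a partition. At a vertex $v$, its two edges form a forbidden pair with probability as large as about $\mu/d\approx(1-\rho)/2$. That is a constant, not a small probability. So the Local Lemma cannot avoid these events, and ``deleting incompatible transitions'' breaks the forest into $\Theta(n)$ paths.
\item Choosing the matchings greedily one after another also fails. The graph of edges between $S_i$ and $S_{i+1}$ that are compatible with the already chosen $M_{i-1}$ need not be anywhere near regular. So it need not contain a near-perfect matching.
\item A nibble would need an auxiliary hypergraph with near-equal degrees and small codegrees. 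Constructing one is exactly the missing step.
\end{itemize}

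\emph{What the paper does instead.} This is also where almost-regularity is genuinely used.
\begin{enumerate}
\item It passes to a spanning $d'n$-regular subgraph with $d'\approx d$, via \cref{lm:regularising}.
\item It enlarges $\mathcal F$ to an exactly $\mu'n$-regular system (\cref{prop:regF}). It does this by taking a regular spanning subgraph of the compatibility graph $H_v$ on $N(v)$. This is possible because $\mu<d/2$ gives $\delta(H_v)>|N(v)|/2$.
\item It fixes a random partition into parts $S_i^{(j)}$. The odd-indexed matchings are chosen uniformly from K\"onig decompositions, independently on each subpart. For even $i$, McDiarmid's inequality then shows that the graph $G_i^*$ of edges compatible with both neighbouring matchings has all degrees $(d-\mu)^2m^2/d\pm n^{7/16}$. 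K\"onig's theorem gives $G_i^*$ a near-perfect matching. The result is a compatible linear forest with at most $n^{32/33}$ components (\cref{lem:nearspanninglinearforest}).
\end{enumerate}

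\textbf{Steps~1 and~2 also have gaps, though less fundamental ones.}

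\emph{The length-$3$ count in Step~1 does not work.} When $d$ is near $1/2$, the good sets $A\subseteq N(x_2)$ and $B\subseteq N(y_1)$ have size only about $dn/2\approx n/4$. They may span no edges at all. Also, the $\mu n$ bound at $a$ is measured against $d(a)$, not $d(a,B)$. So the forbidden edges at $a$ may be exactly those going into $B$, and the union bound over fractions gives nothing.

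Your fallback of longer paths is the right direction, but it needs a key observation: compatibility is a property of transitions, not of vertices. A vertex can be left along \emph{every} edge only once it is reached via more than $\mu n$ (in the paper, $(\mu+\eta)n$) distinct compatible final edges. \Cref{lem:connecting} iterates this through the digraphs $D_i$ and the sets $V_i$ until $|V_t|>(1/2+\eps/2)n$, so that the sets for the two edges intersect. This argument uses only $\delta(G)\ge\delta n$ and $\mu<\delta/2$, not almost-regularity. So you have placed the role of almost-regularity in the wrong step.

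\emph{The single-edge vertex absorber in Step~2 is unjustified.} It needs $a,b\in N(v)$ with $ab\in E(G)$ and $\{av,vb\}\notin\mathcal F_v$. But when $d$ is close to $1/2$, $a$ is guaranteed only $(2d-1)n<\mu n$ neighbours inside $N(v)$, so $\mathcal F_v$ may forbid all of them. Hence the count ``by the more-than-half margin'' does not go through. This is why the paper builds its absorbing paths (\cref{lem:absorbinggadget}) from several applications of the connecting lemma. It then obtains supersaturation through random subsets (\cref{lem:randomsubset}).
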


Structures other than Hamilton cycles have also been considered with respect to incompatibility systems, including powers of Hamilton cycles~\cite{cheng2025compatible} and graph tilings~\cite{hu2023graph}. 
However, as in~\cref{thm:KLS}, the lower bounds on~$\mu$ obtained in these papers are very small and likely to be far from the truth.

One can also generalise compatible systems further as follows, to allow pairs of edges in~$\mathcal{F}$ that may not be intersecting, that is, $\mathcal{F} \subseteq \binom{E(G)}{2}$.
If $G$ is an edge-coloured graph and $\mathcal{F}_{\textrm{mono}}'$ consists of all pairs of edges of the same colour, then $\mathcal{F}_{\textrm{mono}}'$-compatible is equivalent to rainbow. 

Furthermore, we can even let $\mathcal{F} \subseteq 2^{E(G)}$ be a family of all subsets of~$E(G)$ and seek a subgraph of~$G$ that does not contain any members of~$\mathcal{F}$. 
In the literature, these are sometimes known as \emph{configuration graphs} and \emph{$\mathcal{F}$-avoiding subgraphs} respectively, see for example the work of Delcourt and Postle \cite{delcourtpostleconflictfree}, and sometimes known as \emph{conflict systems} and \emph{conflict-free subgraphs} respectively, see for example the work of Glock, Joos, Kim, K\"uhn, and Lichev \cite{conflictfree}.

\subsection{Corollaries for properly coloured Hamilton cycles}

As previously discussed, compatible Hamilton cycles generalise properly coloured Hamilton cycles, through the incompatibility system $\mathcal{F}_{\textrm{mono}}$ consisting of all pairs of intersecting edges of the same colour. 
Recall that $\Delta_{\mathrm{mono}}(G) \coloneqq \max \{ \Delta(H) : H \subseteq G \text{ is monochromatic}\}$. 
Although they are straightforward corollaries, we explicitly state the following two results on properly coloured Hamilton cycles as they are likely to be of independent interest.
\begin{corollary}\label{cor:properly_coloured_main}
     For any $\rho> 0$ and $\delta>1/2$, there exists $n_0\in \mathbb{N}$ such that the following holds.
     If $G$ is an edge-coloured $n$-vertex graph with $n\geq n_0$, 
\begin{align*}
\delta(G) \ge \delta n
\text{ and }
\Delta_{\mathrm{mono}}(G) \le  (1-\rho)\frac{\delta + \sqrt{2\delta - 1}}{4}n,
\end{align*}
then $G$ contains a properly coloured Hamilton cycle.
\end{corollary}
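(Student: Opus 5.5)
This follows directly from \cref{thm:main} applied to the incompatibility system $\mathcal{F}_{\textrm{mono}}$. Given $\rho>0$ and $\delta>1/2$, let $n_0$ be the integer supplied by \cref{thm:main} for the same $\rho$ and $\delta$. Let $G$ be an edge-coloured $n$-vertex graph with $n\ge n_0$, $\delta(G)\ge \delta n$ and $\Delta_{\mathrm{mono}}(G)\le \mu n$, where $\mu\coloneqq(1-\rho)\frac{\delta+\sqrt{2\delta-1}}{4}$. Define $\mathcal{F}_{\textrm{mono}}$ to be the set of all pairs $\{uv,vw\}$ of distinct edges of $G$ that share a vertex and have the same colour.

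First, we check that $\mathcal{F}_{\textrm{mono}}$ is $\mu n$-bounded. Fix a vertex $v$ and an edge $e=uv$ of colour $c$. Every $e'$ with $\{e,e'\}\in(\mathcal{F}_{\textrm{mono}})_v$ is an edge at $v$ of colour $c$ other than $e$. The colour-$c$ edges at $v$ form the star at $v$ in the monochromatic subgraph of colour $c$, so there are at most $\Delta_{\mathrm{mono}}(G)$ of them. Hence there are at most $\Delta_{\mathrm{mono}}(G)-1\le \mu n$ such $e'$; the slack of $1$ is not even needed.

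Next, \cref{thm:main} gives an $\mathcal{F}_{\textrm{mono}}$-compatible Hamilton cycle $C$ in $G$. Any two incident edges of $C$ meet in a single vertex. If they had the same colour, they would form a pair in $\mathcal{F}_{\textrm{mono}}$, contradicting compatibility. So $C$ is properly coloured.

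There is no genuine obstacle here. The only points to verify are that the boundedness count is taken per vertex and per edge, exactly as in the definition of $\Delta$-bounded, and that the constants $\rho$ and $\delta$ are passed unchanged to \cref{thm:main}.
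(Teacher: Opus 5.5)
Your proof is correct and matches the paper's argument: the paper deduces this corollary immediately from Theorem~\ref{thm:main} applied to $\mathcal{F}_{\textrm{mono}}$, using that $\mathcal{F}_{\textrm{mono}}$ is $\Delta_{\mathrm{mono}}(G)$-bounded and that $\mathcal{F}_{\textrm{mono}}$-compatible is the same as properly coloured. Your per-vertex, per-edge count of at most $\Delta_{\mathrm{mono}}(G)-1$ is slightly sharper than needed but entirely fine.
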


\begin{corollary}\label{cor:properly_coloured_reg}
    For any $\rho,\eps > 0$, there exist $\gamma > 0 $ and $n_0\in \mathbb{N}$ such that the following holds.
    Let $G$ be an $n$-vertex graph where $n\geq n_0$ and $(d-\gamma)n \le \delta(G) \le \Delta(G) \le (d+\gamma)n$ for some $d \ge 1/2 + \eps$. If $G$ is edge-coloured such that $\Delta_{\mathrm{mono}}(G) \leq (1-\rho)dn/2$, 
then $G$ contains a properly coloured Hamilton cycle.
\end{corollary}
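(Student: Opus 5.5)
This follows directly from \cref{thm:reg}, applied to the incompatibility system $\mathcal{F}_{\textrm{mono}}$ induced by the colouring. Given $\rho,\eps>0$, I take $\gamma>0$ and $n_0$ to be exactly the constants provided by \cref{thm:reg} for the same $\rho$ and $\eps$. Now let $G$ be an edge-coloured $n$-vertex graph with $n\ge n_0$, with $(d-\gamma)n\le\delta(G)\le\Delta(G)\le(d+\gamma)n$ for some $d\ge 1/2+\eps$, and with $\Delta_{\mathrm{mono}}(G)\le(1-\rho)dn/2$.

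Let $\mathcal{F}_{\textrm{mono}}$ be the set of pairs $\{uv,vw\}$ of distinct intersecting edges of $G$ that have the same colour. The first step is to check that $\mathcal{F}_{\textrm{mono}}$ is $\mu n$-bounded with $\mu=(1-\rho)d/2$.

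Fix a vertex $v$ and an edge $e=uv$ of colour $c$. The edges $e'$ with $\{e,e'\}\in(\mathcal{F}_{\textrm{mono}})_v$ are the edges at $v$ of colour $c$ other than $e$. All edges of colour $c$ at $v$ lie in the monochromatic subgraph of colour $c$, so there are at most $\Delta_{\mathrm{mono}}(G)$ of them. Excluding $e$ itself leaves at most $\Delta_{\mathrm{mono}}(G)-1\le(1-\rho)dn/2=\mu n$ such edges $e'$. So $\mathcal{F}_{\textrm{mono}}$ is $\mu n$-bounded, and in fact has a spare unit that is not even needed.

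By \cref{thm:reg}, $G$ therefore contains an $\mathcal{F}_{\textrm{mono}}$-compatible Hamilton cycle $C$. Compatibility means no two intersecting edges of $C$ share a colour. In particular, consecutive edges of $C$ have distinct colours, so $C$ is properly coloured.

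There is no real obstacle here. The only point needing care is to match the quantifiers, choosing $\gamma$ and $n_0$ from \cref{thm:reg} with the same $\rho,\eps$, and to note that the bound on $\mathcal{F}_{\textrm{mono}}$ comes from $\Delta_{\mathrm{mono}}(G)$, which by definition counts same-coloured edges at a single vertex.
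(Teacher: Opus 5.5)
Your proposal is correct and is exactly the paper's argument: the paper notes the corollary is immediate from \cref{thm:reg} applied to $\mathcal{F}_{\textrm{mono}}$ with the same $\gamma$ and $n_0$. Your check that $\mathcal{F}_{\textrm{mono}}$ is $(\Delta_{\mathrm{mono}}(G)-1)$-bounded, hence $\mu n$-bounded with $\mu=(1-\rho)d/2$, and that a compatible Hamilton cycle is properly coloured, is precisely the verification needed.
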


The two corollaries  are immediate from \cref{thm:main} and \cref{thm:reg} respectively, using the incompatibility system $\mathcal{F}_{\textrm{mono}}$ defined above.
Corollary~\ref{cor:properly_coloured_reg} is asymptotically sharp by Proposition~\ref{prop:upper_bound_reg}.
Note that by taking $d = 1$, Corollary~\ref{cor:properly_coloured_main} implies that if $\delta(G) \ge (1-\gamma)n$ and $\Delta_{\mathrm{mono}}(G) \le (1-\rho) n/2$, then there is a properly coloured Hamilton cycle.
Thus, we recover an asymptotic version of the Bollab\'as--Erd\H{o}s conjecture, which is also generalised to almost complete graphs.
(The original proof in~\cite{AsymptoticBollobasErdos} relies heavily on the fact that $G$ is complete, so it does not generalise to almost complete graphs easily.)

\subsection{Paper outline and proof overview}

We start with the two constructions proving~\cref{prop:upper_bound_reg,prop:upper_bound_nonreg} in \cref{sec:upper_bounds} and some preliminaries in \cref{sec:preliminaries}. We end with some open problems in \cref{sec:open_problems}. The rest of the paper is devoted to the proofs of~\cref{thm:reg,thm:main}, which follow a common strategy which we now briefly present.

One of the most famous techniques for finding long paths is P\'{o}sa's rotation-extension.
Roughly speaking, given a path~$P= v_1 \dots v_{\ell}$ of maximum length and an edge~$v_1 v_i$ with $i \ge 3$, we have that either $P \cup v_1v_i$ is a cycle or $(P \cup v_1v_i) - v_{i-1}v_i$ is another path of the same length.
The latter is often referred to as a \emph{rotation of~$P$}. 
However, as observed in~\cite{KLS}, a compatible path may not have a single rotation. 
One idea that was used to circumvent this issue in the proof of \cref{thm:KLS} in~\cite{KLS} is to restrict to compatible paths with stronger properties that enable rotations. 
Unfortunately, this results in a very small~$\mu$. 

In this paper, we instead adapt another celebrated technique for constructing Hamilton cycles: absorption, which is often credited to R\"odl, Ruci\'nski and Szemer\'edi~\cite{rodl2008approximate}.
The proof is divided into three key lemmas: connecting, absorbing and covering.

The connecting lemma enables us join any two compatible paths together using constantly many vertices. 
The absorbing lemma states that one can reserve a small set of vertices~$A$ such that given a compatible linear forest~$H$ in $G\setminus A$ with at most $\beta n$ component (that is, $H$ is union of at most $\beta n$ paths), there exists a compatible cycle covering~$V(H) \cup A$. 
Finally, the covering lemma states that we can indeed find such a compatible linear forest.

Adapting this methodology to the compatible setting is non-trivial.
The main difficulty lies in the covering lemma, that is, finding an almost spanning compatible linear forest. 
Fix an equipartition of $V(G)$ into $V_1, \dots, V_m$, each of size $\beta n$ and with $m$ even. 
A linear forest can be constructed by picking a perfect matching~$M_i$ between $V_i$ and $V_{i-1}$ for each $i <m$. 
But clearly, an arbitrary choice of such a linear forest may not be compatible. 
A key property is that the incompatible system~$\mathcal{F}$ only imposes local restrictions, that is, $e \cap e' \ne \emptyset$ for all $\{e,e'\} \in \mathcal{F}$.
Consider the union of the odd matchings, $M_1 \cup M_3 \cup \dots \cup M_{m-1}$, which is itself a matching and so is compatible.
The choices of $M_2, M_4, \dots, M_{m-2}$ are now independent of each other. 
Therefore if we know how to pick $M_2$, then we should know how to pick the rest and so obtain a compatible linear forest. We discuss this strategy in more detail in \cref{sec:linear_forest}.

\subsection{Notation}

Throughout the paper, we use the following standard notation. All graphs in this paper are \emph{simple}, that is, without loops and multiple edges. Let $G$ be a graph. We denote by $V(G)$ and $E(G)$ the \emph{vertex set} and \emph{edge set} of $G$, respectively. An edge between two vertices $u$ and $v$ is denoted by $uv$ or $vu$ interchangeably. The \emph{size} of $G$ is $e(G) \coloneqq |E(G)|$. Given a set of vertices $X \subseteq V(G)$, we denote by $G[X]$ the \emph{induced subgraph} of $G$ on $X$ and define $G-X\coloneqq G[V(G)\setminus X]$. Similarly, if $\mathcal{F}$ is an incompatibility system for $G$ and $X\subseteq V(G)$, then we denote by $\mathcal{F}[X]$ the incompatibility system for $G[X]$ \emph{induced} by $X$, that is, $\mathcal{F}[X]\coloneqq \{\{e,e'\}\in \mathcal{F}: V(e)\cup V(e')\subseteq X\}$, and we write $\mathcal{F}-X\coloneqq \mathcal{F}[V(G)\setminus X]$.

Given a vertex $v \in V(G)$, we denote by $N_G(v)$ the \emph{neighbourhood} of $v$ in $G$, and by $d_G(v) \coloneqq |N_G(v)|$ the \emph{degree} of $v$ in $G$. Given a set of vertices $X \subseteq V(G)$, we let $d_G(v, X) \coloneqq |N_G(v) \cap X|$. We denote by $\delta(G) \coloneqq \min_{v \in V(G)} d_G(v)$ the \emph{minimum degree} of $G$, and by $\Delta(G) \coloneqq \max_{v \in V(G)} d_G(v)$ its \emph{maximum degree}.

A \emph{directed graph}, or \emph{digraph}, is a graph in which edges have a direction and up to two edges between any pair of vertices are allowed, one for each direction. An \emph{oriented graph} is a digraph $D$ with at most one edge between a pair of vertices, in other words, $D$ can be obtained by orienting the edges of an undirected graph.
A directed edge from a vertex $u$ to a vertex $v$ is denoted by $uv$.
Given a digraph $D$, its vertex set is denoted $V(D)$ and its edge set $E(D)$. We use $e(D) \coloneqq |E(D)|$ to denote its \emph{size}. 
Given vertex sets $X, Y \subseteq V(D)$, we use $D[X,Y]$ to denote the subgraph of $D$ consisting of all the edges $xy\in E(D)$ such that $x\in X$ and $y\in Y$, and denote by $e_D(X,Y) \coloneqq |E(D[X,Y])|$ the size of $D[X,Y]$. Given a vertex $v \in V(D)$, we denote by $N^+_D(v)$  the \emph{outneighbourhood} of $v$ and by $N^-_D(v)$ its \emph{inneighbourhood}. Also, $d^+_D(v) \coloneqq |N_D^+(v)|$ denotes the \emph{outdegree} of $v$ and $d^-_D(v) \coloneqq |N_D^-(v)|$ its \emph{indegree}; additionally, $d^+_D(v, X) \coloneqq |N^+_D(v) \cap X|$ and $d^-_D(v, X) \coloneqq N^-_D(v, X)$ denote its in and outdegree into the set $X$, respectively.
For all the above, we omit the subscript $G$ or $D$ if it is clear from context.

The \emph{length} of a path is the number of its edges. The \emph{endpoints} of a path $v_0v_1\dots v_\ell$ are $v_0$ and $v_\ell$. When we say $xyQzw$ is path, we implicitly mean that $Q$ is a path with $y$ and $z$ as endpoints. 

In our statements, we will often make assumptions of the form $0 < a_1 \ll \dots \ll a_n$. This should be interpreted as saying that there exist positive real-valued functions $f_1, \dots, f_{n-1}$ such that the statement holds for any choice of $a_1, \dots, a_n > 0$ with $a_{i} \leq f_i(a_{i+1})$ for each $i \in [n-1]$. When writing $1/a$ in place of some $a_i$, we further implicitly assume that $a \in \mathbb{N}$. 

Given $a, b, c, d \in \mathbb{R}$, the notation $a \pm b = c \pm d$ should be interpreted as $[a - b, a + b] \subseteq [c - d, c+d]$. Throughout the paper, we omit floors and ceilings whenever they are not critical to the argument.

\section{Upper bounds}\label{sec:upper_bounds}

In this section, we first prove~\cref{prop:upper_bound_reg}, that is, showing $\mu(\delta) \le \delta/2$. 
To cover the case where $n$ is even, we generalise a different extremal example for the Bollob\'as--Erd\H{o}s conjecture that was first observed by Fujita and Magnant~\cite{FujitaMagnant}.

\begin{proof}[Proof of~\cref{prop:upper_bound_reg}]
Let $G$ be any $d$-regular graph. 
Clearly, we may assume that $G$ contains a Hamilton cycle as otherwise the result is trivial. 

Orient $G$ so that $d^-(v), d^+(v) \in \{ \lfloor d/2\rfloor, \lceil d/2 \rceil\}$ for all~$v \in V(G)$. 
This is possible since, when $d$ is even, we can orient along an Eulerian walk on~$G$, and when $d$ is odd (and so $|V(G)|$ is even), there is a perfect matching~$M$ in~$G$ (as $G$ contains a Hamilton cycle) and so we can orient $G - M$ into a regular oriented graph and use an arbitrary orientation on~$M$. 

Fix a vertex $x \in V(G)$ and re-orient all edges containing~$x$ to be directed away from~$x$. 
Call the resulting oriented graph~$H$. 
Set $\mathcal{F}$ to be the set of $\{uv,uw\} \in \binom{E(G)}2$ such that $u \ne x$ and either $uv, uw \in E(H)$ or $vu, wu \in E(H)$. 
Note that $\mathcal{F}$ is $\lceil d/2 \rceil$-bounded and that an $\mathcal{F}$-compatible cycle in~$G$ is a directed cycle in~$H$. 
However $H$ does not have a directed Hamilton cycle as $x$ is a source, as desired. 

For the `moreover part', define an edge-colouring on~$G$ with colour set $V(G)$ such that the edge $uv \in E(G)$ receives colour~$v$ if and only if $uv \in E(H)$.  
Note that a properly coloured cycle corresponds to a directed cycle in~$H$. 
Thus it is the desired edge-colouring of~$G$.
\end{proof}

Next we exhibit a different construction to prove~\cref{prop:upper_bound_nonreg}, which significantly improves this upper bound.

\begin{proof}[Proof of~\cref{prop:upper_bound_nonreg}]
    Let $a$ be the unique even integer such that $2n/3 < a \le 2n/3 + 2$.
    Let $A$ and $B$ be disjoint vertex sets of sizes $a$ and $n-a$ respectively. Let $G$ be a graph on $A\cup B$ where  each vertex in $B$ joined to all other vertices and $G[A]$ is $(\delta n -(n-a))$-regular. 
    Note that since $a$ is even and $a > (\delta n -(n-a)) > 0$ it is always possible to find a $(\delta n -(n-a))$-regular graph $G[A]$  -- for example, take an appropriate circulant graph.
    Let \[\mathcal{F} \coloneqq \left\{ \{e,e'\} \in \binom{E(G[A]))}{2} \colon e \cap e' \ne \emptyset\right\}.\]
    Note that this is the incompatibility system $\mathcal{F}_{\textrm{mono}}$ for the edge colouring of $G$ where all edges in $A$ are coloured red and all other edges receive unique colours. 
    The resulting incompatibility system $\mathcal{F}$ is $(\delta n-n +a - 1)$-bounded, where we have $\delta n -n+a - 1\leq \delta n-n/3+1$ since $a \le 2n/3 + 2$.

    Since $a> 2n/3$, we have that $|A|>2|B|$. Therefore, any Hamilton cycle in $G$ contains two consecutive edges in $A$. However, any two such edges are incompatible, as desired.  

    For the `moreover part', we now edge-colour $G$ such that all edges in $G[A]$ use red and each remaining edge of~$G$ receives a unique colour. 
    A properly coloured cycle is $\mathcal{F}$-compatible, so $G$ has no properly coloured Hamilton cycle. 
\end{proof}

\section{Preliminaries}\label{sec:preliminaries}

\subsection{Probabilistic estimates}

Let $n,m, k \in \mathbb{N}$ with $m,k < n$.
Recall that the hypergeometric distribution with parameters $n, m$ and $k$ is the distribution of the random variable $X$ defined as follows.
Let $M$ be a random subset of $[n]$ of size $m$ and let $X \coloneqq |M \cap [k]|$.

We will need the following standard Chernoff bound, as well as McDiarmid's and Markov's inequalities.

\begin{lemma}[{Chernoff's inequality (see e.g.~\cite[Corollary~2.3 and Theorem~2.10]{JLR})}]\label{lm:chernoff}
    Let $X$ denote the sum of independent identically distributed binomial random variables, or a hypergeometric random variable. Let $\mu \coloneqq \mathbb{E}[X]$. Then for any $0 \le \delta \le 1$, we have
    \[
    \mathbb{P}(|X- \mu| \geq \delta \mu) \le 2 \exp(-\delta^2\mu/3).
    \]
\end{lemma}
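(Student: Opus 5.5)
This is a standard result, so the plan is to reproduce the exponential-moment (Chernoff) argument for the binomial case and then transfer it to the hypergeometric case. Throughout, write $\mu=\mathbb{E}[X]$ and split the event $|X-\mu|\ge\delta\mu$ into its upper tail $X\ge(1+\delta)\mu$ and lower tail $X\le(1-\delta)\mu$. It suffices to show that each tail has probability at most $\exp(-\delta^2\mu/3)$; a union bound then gives the factor $2$.

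\emph{Binomial case.} Let $X=\sum_{i=1}^N X_i$ with the $X_i$ independent Bernoulli$(p)$, so $\mu=Np$.
\begin{enumerate}[label=(\roman*)]
\item For $t>0$, Markov's inequality applied to $e^{tX}$ gives
\[
\mathbb{P}(X\ge(1+\delta)\mu)\le e^{-t(1+\delta)\mu}\,\mathbb{E}[e^{tX}].
\]
\item By independence, $\mathbb{E}[e^{tX}]=(1+p(e^t-1))^N\le \exp(\mu(e^t-1))$, using $1+x\le e^x$.
\item Choosing $t=\log(1+\delta)$ yields the bound $\exp(-\mu\varphi(\delta))$, where $\varphi(\delta)=(1+\delta)\log(1+\delta)-\delta$.
\item For the lower tail, apply Markov to $e^{-tX}$ and take $t=-\log(1-\delta)$. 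This gives the bound $\exp(-\mu\varphi(-\delta))$ with $\varphi(-\delta)=(1-\delta)\log(1-\delta)+\delta$.
\end{enumerate}
The remaining step is the elementary calculus inequalities $\varphi(\delta)\ge\delta^2/3$ and $\varphi(-\delta)\ge\delta^2/2$ for $0\le\delta\le1$. Both functions vanish at $0$ together with their first derivatives, so it suffices to compare second derivatives. The second derivative of $\varphi(\delta)$ is $1/(1+\delta)\ge 1/2\ge 2/3\cdot\tfrac34$; the cleanest route is to check that $g(\delta)=\varphi(\delta)-\delta^2/3$ satisfies $g(0)=g'(0)=0$, with $g'$ first increasing and then decreasing on $[0,1]$, and $g'(1)=\log 2-2/3>0$. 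The lower-tail inequality follows in the same way. The endpoint $\delta=1$ of the lower tail is handled by continuity, or directly: $\mathbb{P}(X=0)=(1-p)^N\le e^{-\mu}$.

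\emph{Hypergeometric case.} The only place independence was used is step (ii), so the key point is to show that the hypergeometric $X$ (drawing $m$ elements without replacement from $[n]$ and counting those in $[k]$) satisfies $\mathbb{E}[e^{tX}]\le\mathbb{E}[e^{tY}]$ for every real $t$, where $Y\sim\mathrm{Bin}(m,k/n)$ has the same mean. This is Hoeffding's classical comparison: sampling without replacement is dominated by sampling with replacement for every continuous convex function $f$. One way to prove it is to write $X$ as a sum of exchangeable indicators $I_1,\dots,I_m$, one per draw, and $Y$ as the corresponding sum of i.i.d.\ indicators. One then shows that $\mathbb{E}[f(\sum_j I_j)]$ is an average of $\mathbb{E}[f(\cdot)]$ over with-replacement samples conditioned on their pattern of repetitions, and concludes by Jensen's inequality. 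An alternative is to note that the hypergeometric draws are negatively associated, so that $\mathbb{E}\bigl[\prod_j e^{tI_j}\bigr]\le\prod_j\mathbb{E}[e^{tI_j}]$ for $t$ of either sign. With this comparison in hand, steps (i)--(iv) go through verbatim.

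I expect the main obstacle to be this hypergeometric comparison; the rest is routine. Given that the paper cites the result (see e.g.~\cite[Corollary~2.3 and Theorem~2.10]{JLR}), it would be acceptable to invoke Hoeffding's theorem there rather than reprove it.
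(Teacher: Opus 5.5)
Your proposal is correct and follows the same approach as the paper, which gives no proof but cites \cite[Corollary~2.3 and Theorem~2.10]{JLR}: there the binomial bound comes from your exponential-moment computation with $\varphi(x)=(1+x)\log(1+x)-x$, and the hypergeometric case is reduced to it via Hoeffding's comparison $\mathbb{E}[e^{tX}]\le\mathbb{E}[e^{tY}]$ with $Y\sim\mathrm{Bin}(m,k/n)$. There are two cosmetic slips. First, the chain $1/(1+\delta)\ge 1/2\ge \tfrac23\cdot\tfrac34$ establishes nothing, since comparing second derivatives would need $1/(1+\delta)\ge\tfrac23$, which fails for $\delta>1/2$; your analysis of $g'(\delta)=\log(1+\delta)-2\delta/3$ does settle it. Second, your Hoeffding sketch is worded backwards: it is the with-replacement sum that decomposes over repetition patterns and, after a random relabelling, has conditional mean $\sum_j I_j$ given the without-replacement sample, so conditional Jensen yields $\mathbb{E}f(\sum_j I_j)\le\mathbb{E}f(Y)$.
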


For simplicity of use, we state the following corollary of \cref{lm:chernoff}, which follows by applying it to the hypergeometric distribution with parameters $n, m, (\alpha + \lambda)n$.

\begin{corollary}\label{cor:chernoff2}
Let $\alpha , \lambda >0$ with $\alpha + \lambda <1$. 
Suppose that $Y \subseteq [n]$ and $ |Y| \ge (\alpha + \lambda)n$. 
Then 
\begin{align*}
\left| \left\{ M \in \binom{[n]}{m} : |M \cap Y | \le \alpha m \right\} \right| \le 2 \binom{n}m e^{-\lambda^2 m /3}.
\end{align*}
\end{corollary}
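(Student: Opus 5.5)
The plan is to apply \cref{lm:chernoff} to a single hypergeometric random variable and then translate the resulting probability into a count of $m$-subsets. Choose $M$ uniformly at random from $\binom{[n]}{m}$, and fix a subset $Y' \subseteq Y$ with $|Y'| = \lceil(\alpha+\lambda)n\rceil$; this is possible because $|Y| \ge (\alpha+\lambda)n$. Since $|M \cap Y| \ge |M \cap Y'|$, it is enough to bound the probability that $|M\cap Y'| \le \alpha m$. After relabelling we may assume $Y' = [k]$ with $k = \lceil(\alpha+\lambda)n\rceil$. Then $X \coloneqq |M\cap Y'|$ is hypergeometric with parameters $n$, $m$, $k$, and its mean is $\mathbb{E}[X] = mk/n \ge (\alpha+\lambda)m$.

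Next, we rewrite the event $X \le \alpha m$ as a deviation from the mean. Let $\mu_X \coloneqq \mathbb{E}[X]$ and set $\delta \coloneqq 1 - \alpha m/\mu_X$. Then $X \le \alpha m$ is the same event as $X \le (1-\delta)\mu_X$, which is contained in $\{|X-\mu_X| \ge \delta\mu_X\}$. Because $\mu_X \ge (\alpha+\lambda)m > \alpha m$, we have $0<\delta\le 1$. \cref{lm:chernoff} therefore gives
\[
\mathbb{P}(X\le \alpha m)\le 2\exp(-\delta^2\mu_X/3).
\]

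The only step needing care is the exponent. We have $\delta\mu_X = \mu_X - \alpha m \ge \lambda m$, so
\[
\delta^2\mu_X = \frac{(\mu_X-\alpha m)^2}{\mu_X} \ge \frac{\lambda^2 m^2}{\mu_X} \ge \lambda^2 m,
\]
where the last inequality uses $\mu_X \le m$. The middle inequality is valid because $\mu_X - \alpha m \ge \lambda m \ge 0$. This yields $\mathbb{P}(|M\cap Y|\le \alpha m)\le 2e^{-\lambda^2 m/3}$. Multiplying by $\binom{n}{m}$ turns the probability into a count of subsets, which is exactly the claimed bound.

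I do not expect any real obstacle. The only subtlety is that the mean depends on $|Y|$, and the monotone reduction to $Y'$ followed by the estimate $\mu_X\le m$ handles this.
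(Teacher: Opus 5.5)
Your proposal is correct and is the same argument as the paper's, which applies \cref{lm:chernoff} to the hypergeometric distribution with parameters $n$, $m$, $(\alpha+\lambda)n$. That is implicitly what you do when you pass to a subset $Y'\subseteq Y$ of that size. Your handling of the ceiling, and the estimate $\delta^2\mu_X \ge \lambda^2 m$ via $\mu_X \le m$, spell out details the paper leaves unstated.
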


\begin{theorem}[McDiarmid's inequality \cite{mcdiarmid1989method}]\label{thm:mcdiarmid} Let $n \in \mathbb{N}$ and $c_1, \dots, c_n \geq 0$. For each $i \in [n]$, let $X_i$ be an independent random variable taking values in $\Omega_i$, and let $X \coloneqq (X_1, \dots, X_n)$. Let $f: \prod_{i=1}^m \Omega_i \to \mathbb{R}$ be a function such that, for each $i \in [n]$, changing $X$ in the $i$th coordinate changes the value of $f(X)$ by at most $c_i$. Then, for all $t >0$, 
\[\mathbb{P}\left(\left|f(X) - \mathbb{E}[f(X)] \right| \geq t \right) \leq 2\exp \left(-\frac{t^2}{\sum_{i \in [n]}c_i^2}\right).\]
\end{theorem}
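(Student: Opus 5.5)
The plan is the standard martingale (Azuma--Hoeffding) argument applied to the Doob martingale of $f(X)$. In fact it yields the stronger bound $2\exp\bigl(-2t^2/\sum_{i}c_i^2\bigr)$, which implies the stated one. (I read the domain of $f$ as $\prod_{i=1}^n\Omega_i$.)

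First I set up the martingale and show its increments lie in short intervals. For $0\le k\le n$ let $Z_k\coloneqq\mathbb{E}[f(X)\mid X_1,\dots,X_k]$, so $Z_0=\mathbb{E}[f(X)]$ and $Z_n=f(X)$. Let $D_k\coloneqq Z_k-Z_{k-1}$. By independence, $Z_k=g_k(X_1,\dots,X_k)$, where
\[
g_k(x_1,\dots,x_k)\coloneqq\mathbb{E}\bigl[f(x_1,\dots,x_k,X_{k+1},\dots,X_n)\bigr].
\]
Here the expectation is over the product measure of the remaining coordinates. Fix $x_1,\dots,x_{k-1}$ and define
\[
L_k\coloneqq\inf_{y}g_k(x_1,\dots,x_{k-1},y)-g_{k-1}(x_1,\dots,x_{k-1}),\qquad U_k\coloneqq\sup_{y}g_k(x_1,\dots,x_{k-1},y)-g_{k-1}(x_1,\dots,x_{k-1}).
\]
Then $L_k\le D_k\le U_k$. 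For any $y,y'$, the difference $g_k(\dots,y)-g_k(\dots,y')$ is the expectation of a difference of $f$-values in which only the $k$th coordinate changes. It is therefore at most $c_k$ in absolute value, so $U_k-L_k\le c_k$. Moreover, $L_k$ and $U_k$ depend only on $X_1,\dots,X_{k-1}$.

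Next I bound the moment generating function. By Hoeffding's lemma, a random variable $D$ with $\mathbb{E}[D]=0$ and values in an interval $[a,a+c]$ satisfies $\mathbb{E}[e^{sD}]\le e^{s^2c^2/8}$. The proof uses convexity to bound $e^{sD}$ by the chord through the endpoints, and then a Taylor estimate of the resulting log-MGF. Applying the lemma conditionally on $X_1,\dots,X_{k-1}$ gives $\mathbb{E}[e^{sD_k}\mid X_1,\dots,X_{k-1}]\le e^{s^2c_k^2/8}$. Peeling off one increment at a time via the tower property gives
\[
\mathbb{E}\bigl[e^{s(f(X)-\mathbb{E}f(X))}\bigr]\le\exp\Bigl(\frac{s^2}{8}\sum_{i=1}^n c_i^2\Bigr).
\]

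Finally I convert this into the tail bound. Markov's inequality applied to $e^{s(f-\mathbb{E}f)}$, with the choice $s=4t/\sum_i c_i^2$, gives $\mathbb{P}(f(X)-\mathbb{E}f(X)\ge t)\le\exp\bigl(-2t^2/\sum_i c_i^2\bigr)$. The same argument applied to $-f$ bounds the lower tail. A union bound over the two tails then gives $2\exp\bigl(-2t^2/\sum_i c_i^2\bigr)\le 2\exp\bigl(-t^2/\sum_i c_i^2\bigr)$.

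I expect the main obstacle to be the interval claim in the first step. There independence is essential: it is what lets us write $Z_k$ as an integral over the remaining coordinates with a measure that does not depend on $X_k$. One also has to make sure the interval endpoints are measurable with respect to the past, so that Hoeffding's lemma can be applied conditionally. Everything after that is routine.
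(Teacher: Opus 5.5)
Your argument is correct and is the standard Doob-martingale/Hoeffding-lemma proof from McDiarmid's survey, which the paper only cites and does not reprove; it even yields the sharper exponent $2t^2/\sum_i c_i^2$, which implies the stated bound. Two small loose ends: treat $\sum_i c_i^2=0$ separately, since then $f$ is constant. You can also avoid measurability of $\sup_y$ by applying Hoeffding's lemma, for each fixed $(x_1,\dots,x_{k-1})$, to $g_k(x_1,\dots,x_{k-1},X_k)$, which has mean $g_{k-1}(x_1,\dots,x_{k-1})$ and range in an interval of length $c_k$.
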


\begin{lemma}[{Markov's inequality (see \cite[Equation~1.3]{JLR})}]\label{lm:Markov}
    For any non-neqative random variable $X$ and $a>0$, we have 
    \[\mathbb{P}(X>a\mathbb{E}[X])\leq 1/a.\]
\end{lemma}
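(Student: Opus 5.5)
The plan is the standard pointwise domination argument: bound an indicator of the event $\{X>a\mathbb{E}[X]\}$ by $X$ itself, then take expectations. No earlier result from the paper is needed.

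First I dispose of the degenerate cases. If $\mathbb{E}[X]=\infty$, the event $\{X>a\mathbb{E}[X]\}$ is empty, so its probability is $0\le 1/a$. If $\mathbb{E}[X]=0$, then since $X\ge 0$ we have $X=0$ almost surely. Hence $\mathbb{P}(X>0)=0\le 1/a$. So I may assume $0<\mathbb{E}[X]<\infty$ and set $t\coloneqq a\mathbb{E}[X]>0$.

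The key step is the pointwise inequality
\[
t\cdot \mathbf{1}_{\{X>t\}} \le X,
\]
which holds on every outcome. On the event $\{X>t\}$ the left side equals $t$, which is less than $X$. Off that event the left side is $0$, which is at most $X$ because $X\ge 0$. Taking expectations and using monotonicity and linearity of expectation gives
\[
t\,\mathbb{P}(X>t)\le \mathbb{E}[X].
\]
Dividing by $t=a\mathbb{E}[X]>0$ yields $\mathbb{P}(X>a\mathbb{E}[X])\le 1/a$, as required.

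There is no genuine obstacle. The only points needing care are the degenerate cases $\mathbb{E}[X]\in\{0,\infty\}$, which are handled separately above so that the final division is legitimate. The nonnegativity hypothesis is used exactly once, to justify the domination off the event.
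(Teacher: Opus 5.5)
Your proof is correct and is the standard argument: the pointwise bound $t\,\mathbf{1}_{\{X>t\}}\le X$ followed by taking expectations is exactly how this inequality is proved in the reference the paper cites, since the paper gives no proof of its own. Handling the cases $\mathbb{E}[X]\in\{0,\infty\}$ separately, so that the final division by $t=a\mathbb{E}[X]$ is legitimate, adds care that the citation leaves implicit.
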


\subsection{Regularising}\label{sec:regularising}

When proving \Cref{thm:main}, 
it will be useful to first find a spanning regular subgraph and then to find a compatible linear forest in that regular subgraph. The following result allows us to pass to a regular subgraph.

\begin{theorem}[Christofides,  K\"{u}hn and Osthus {\cite[Theorem 7]{regular_subgraph}}]\label{thm:regular_subgraph}
Let $1/n \ll \eps$, let $\delta \ge (1/2 + \eps)$ and let $\ell \in \mathbb{N}$ with $\ell \le (\delta - \eps + \sqrt{2\delta - 1})n/4$.
Then every $n$-vertex graph with minimum degree $\delta n$ contains a spanning connected $2\ell$-regular subgraph.
\end{theorem}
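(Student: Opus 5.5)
Since \cref{thm:regular_subgraph} is quoted from~\cite{regular_subgraph}, we only indicate how we would prove it. The plan is to verify Tutte's $f$-factor condition for the constant function $f \equiv 2\ell$. Afterwards we make the resulting factor connected by edge switchings. Tutte's theorem says that $G$ has a $2\ell$-factor if and only if, for all disjoint $S,T \subseteq V(G)$,
\begin{align*}
\theta(S,T) \coloneqq 2\ell|S| - 2\ell|T| + \sum_{v \in T} d_{G-S}(v) - q(S,T) \ge 0 .
\end{align*}
Here $q(S,T)$ is the number of components $C$ of $G-(S\cup T)$ with $2\ell|C| + e_G(C,T)$ odd. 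Since $f$ is even, these are exactly the components with $e_G(C,T)$ odd.

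Write $|S| = sn$ and $|T| = tn$. Every $v \in T$ satisfies $d_{G-S}(v) \ge (\delta - s)n$, so
\begin{align*}
\theta(S,T) \ge tn\bigl((\delta - s)n\bigr) - 2\ell(t-s)n - q(S,T).
\end{align*}
If $t \le s$, this is trivially nonnegative, since $q(S,T) \le n - |S| - |T|$ and that term is also dominated when $s$ is large. So assume $t > s$. The worst case is $s + t$ close to $1$, that is, $T$ nearly complementary to $S$. Then one needs $4\ell/n \le 2(1-s)(\delta-s)/(1-2s)$ for all $0 \le s < 1/2$.

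Minimising $g(s) = (1-s)(\delta-s)/(1-2s)$ is a one-variable calculus exercise. It gives exactly the threshold $\ell \le (\delta + \sqrt{2\delta-1})n/4$, attained at $1-2s = \sqrt{2\delta-1}$. The hypothesis that $\ell$ lies $\eps n/4$ below this threshold leaves slack of order $\eps t n^2$ in the main term.

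We would then control $q(S,T)$ in two regimes.
\begin{itemize}
\item If $s + t \le \delta - \eps/2$, every vertex of $G-(S\cup T)$ has at least $\eps n/2$ neighbours there. So every component has size at least $\eps n/2$, giving $q \le 2/\eps$, which is absorbed by the slack.
\item If $s + t > \delta - \eps/2$, we use the trivial bound $q \le (1-s-t)n \le (1-\delta+\eps)n$. We also use the sharper bound $\sum_{v\in T} d_{G-S}(v) \ge e_G(T, V\setminus(S\cup T)) + 2e(G[T])$, together with the fact that odd components must send edges to $T$. This lets each such component be charged against degree into $T$.
\end{itemize}
The case $t$ tiny (for instance $t = 0$) is handled separately: there each odd component must have $e(C,T)$ odd, hence $q = 0$ when $T = \emptyset$, and $q \le \sum_{v\in T} d_{G-S}(v)$ in general. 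Careful bookkeeping near the boundary $s + t \approx 1$ and for small $t$ is the part we expect to be the main obstacle, since the crude estimate above is tight exactly there.

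For connectivity, take a $2\ell$-factor $H$ with the fewest components. Suppose $H$ has two components $C_1$ and $C_2$. We look for edges $ab \in E(H[C_1])$ and $cd \in E(H[C_2])$ with $ac, bd \in E(G)$. Replacing $ab, cd$ by $ac, bd$ keeps $H$ $2\ell$-regular and merges $C_1$ and $C_2$, contradicting minimality. Such a pair exists by counting. Since $\delta(G) \ge (1/2+\eps)n$, any two vertices have at least $2\eps n$ common neighbours, so $G$ has many edges between $C_1$ and $C_2$. An averaging argument over the $\ell|C_1|\cdot\ell|C_2|$ pairs $(ab,cd)$ then finds one with both cross edges present. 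If $\ell$ is very small relative to $n$, this counting needs a cruder variant: first pick $a$ and $c$ with $ac \in E(G)$, then choose $b \in N_H(a)$ and $d \in N_H(c)$ with $bd \in E(G)$. This can fail only if $N_H(a)$ and $N_H(c)$ have no $G$-edges between them, which the density $\delta(G) > n/2$ prevents for a suitable choice of $a, c$.
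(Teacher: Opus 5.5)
The paper does not prove this statement: it quotes it from Christofides, K\"uhn and Osthus and attributes the factor part to Tutte's $f$-factor theorem. Your first half is therefore the expected route, and your minimisation of $(1-s)(\delta-s)/(1-2s)$ correctly recovers the threshold. The bookkeeping you expect to be the main obstacle is easy once the slack is stated correctly: it is $\frac{\eps}{2}(|T|-|S|)n$, not of order $\eps t n^2$.
\begin{itemize}
\item If $|T|\le|S|$, use $q(S,T)\le e_G(T,V\setminus(S\cup T))\le\sum_{v\in T}d_{G-S}(v)$.
\item If $|T|-|S|\ge 2/\eps$, the slack alone beats $q\le n$.
\item Otherwise $|T|(\delta n-|S|)-2\ell(|T|-|S|)\ge \eps|S|n-2n/\eps$, which beats $q\le n$ unless $|S|=O(\eps^{-2})$. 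In that case $G-(S\cup T)$ is connected, so $q\le 1$, and $2\ell\le\delta n-\eps n/2$ finishes.
\end{itemize}

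The genuine gap is the connectivity step. You never use minimality of $H$ beyond the existence of one merging switch $ab,cd\mapsto ac,bd$. The claim that such a switch exists for every $2\ell$-factor with at least two components is false. Here is a counterexample.
\begin{itemize}
\item Let $C_2\subseteq V(G)$ have even size slightly above $(1/2+\eps)n$, and let $H[C_2]$ be a connected bipartite $2\ell$-regular graph with parts $Y,Y'$.
\item Let $H[C_1]$ be any connected $2\ell$-regular graph on $C_1\coloneqq V(G)\setminus C_2$.
\item Let $G$ consist of cliques on $C_1$ and on $C_2$, together with all $C_1$--$Y$ edges.
\end{itemize}
For small $\eps$ we have $\delta(G)\ge(1/2+\eps)n$, and there are $\Theta(n^2)$ edges between $C_1$ and $C_2$. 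Yet every such edge $ac$ has $c\in Y$, so $N_H(c)\subseteq Y'$, and $Y'$ sends no edge to $C_1$. Hence no switch exists, and both your averaging over pairs $(ab,cd)$ and your ``cruder variant'' fail on this example.

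This works for every $\ell$ up to about $n/8$, including $\ell=1$. For $\ell=1$ the statement is a (slightly weakened) Dirac theorem, since a connected spanning $2$-regular graph is a Hamilton cycle, and a single local switch from an arbitrary factor cannot deliver that.

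A simple repair uses the $\eps$-slack. Take a Hamilton cycle $C$ of $G$ by Dirac's theorem. Run your $f$-factor argument in $G-E(C)$, whose minimum degree is at least $\delta n-2$, to get a $(2\ell-2)$-factor $F$. Then $C\cup F$ is a connected $2\ell$-factor. (The paper itself never uses connectivity, and its introduction states the result without it.)
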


Note that~\cref{thm:regular_subgraph} is essentially tight up to the $\eps$ factor: for all positive integers~$n$ with and all $1/2 < \delta < 1$ such that  $\delta n$ is an integer, Christofides,  K\"uhn and Osthus~\cite{regular_subgraph} exhibit an $n$-vertex graph~$G$ with minimum degree $\delta(G) = \delta n$ such that~$G$ contains no $r$-regular graph for any 
$r > (\delta  + \sqrt{(2\delta - 1)})n/2 + 1$.

\bigskip

There are better bounds when the graph is almost regular.

\begin{theorem}[{\cite[Theorem 6]{regular_subgraph}}]\label{lm:regularising}
    Let $0<1/n\ll \varepsilon \ll 1$ and $\delta \ge 1/2+\eps$.
    Then every $n$-vertex graph with $\delta n \le d(v) \le (\delta  + \varepsilon^2 / {5})n$ for all vertices~$v$
    contains a spanning regular subgraph with degree at least $(\delta -\varepsilon) n$.
\end{theorem}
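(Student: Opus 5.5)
The plan is to prove this directly from Tutte's $f$-factor theorem with the constant function $f\equiv r$. We choose $r$ to be an even integer with $(\delta-\varepsilon)n\le r\le(\delta-\varepsilon)n+2$; taking $r$ even removes all parity issues coming from $\sum_v f(v)$. Write $\Delta\le(\delta+\varepsilon^2/5)n$ for the maximum degree. Tutte's theorem says that $G$ has an $r$-factor if and only if, for all disjoint $S,T\subseteq V(G)$,
\[
\theta(S,T)\coloneqq r|S|-r|T|+\sum_{v\in T}d_{G-S}(v)-q(S,T)\ \ge\ 0 .
\]
Here $q(S,T)$ is the number of components $C$ of $G-S-T$ with $e(C,T)$ odd; since $r$ is even, the term $r|C|$ plays no role in the parity. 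The whole proof is a case analysis verifying this inequality. Throughout we write $s=|S|$ and $t=|T|$, and use
\[
\sum_{v\in T}d_{G-S}(v)=\sum_{v\in T}d(v)-e(S,T)\ \ge\ t\delta n-e(S,T).
\]

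We first handle the regimes where $e(S,T)$ can be bounded crudely.

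\emph{Small $S$.} Using $e(S,T)\le st$ gives $\theta\ge rs+t(\delta n-r-s)-q$. This is comfortable when $s\le \varepsilon n/2$, provided $q$ is controlled (see below).

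\emph{$T$ much larger than $S$.} Using $e(S,T)\le s\Delta$ gives $\theta\ge t(\delta n-r)-s(\Delta-r)\ge t\varepsilon n-s(\varepsilon+\varepsilon^2/5)n-q$. This is positive once $t\ge(1+\varepsilon/4)s$ and $q$ is not too large.

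\emph{$S$ at least as large as $T$.} Using $e(S,T)\le t\Delta$ gives $\theta\ge r(s-t)-t\varepsilon^2 n/5-q$. This is positive once $s\ge t+\varepsilon t/4$, again provided $q$ is not too large.

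It remains to control $q$ in all cases, and to treat the narrow middle window $t\le s\le (1+\varepsilon/4)t$ with $s\ge\varepsilon n/2$.

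For $q$ we use the trivial bound $q\le n-s-t$. We also use that every vertex of $G-S-T$ has degree at least $\delta n-s-t$ there, so each component has at least $\delta n-s-t+1$ vertices. Hence $q\le n/(\delta n-s-t)$, which is $O(1/\varepsilon)$ unless $s+t\ge \delta n-O(\varepsilon n)$. Moreover, a component counted in $q$ must send an edge to $T$. Thus when $s+t$ is close to $n$, the components are few or tiny, and each contributes at least $1$ to $\sum_{v\in T}d_{G-S}(v)$. We can therefore trade $q$ against this sum, replacing the lower bound $t\delta n-e(S,T)$ by a version that keeps track of edges from $T$ into $G-S-T$.

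The main obstacle is the middle window $s\approx t$, both of order at least $\varepsilon n/2$. There the linear terms nearly cancel, and the slack is only about $r(s-t)-t\varepsilon^2n/5$. Here I would first try to use $\delta>1/2$. If $s+t$ is at most about $(1-\varepsilon)n$, the components of $G-S-T$ have linear size, so $q=O(1/\varepsilon)$. Then we count $e(S,T)$ more carefully: each $v\in T$ has at least $\delta n-(n-s)$ of its neighbours inside $S$ forced, so the remaining degree must go into $T\cup(V\setminus(S\cup T))$. This yields $\sum_{v\in T}d_{G-S}(v)\ge t(\delta n-s)$, and hence $\theta\ge r(s-t)+t(\delta n-s)-q$. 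Combining this with the bound $e(S,T)\le\min\{st,\,s\Delta,\,t\Delta\}$, and with the fact that near-regularity only costs $\varepsilon^2n/5$ per vertex while we sacrificed $\varepsilon n$ in $r$, should give a positive margin of order $\varepsilon^2 n\cdot\min(s,t)$. If instead $s+t$ is close to $n$, then $G-S-T$ is tiny and every component has a vertex adjacent to $T$ (by $\delta>1/2$). In that case $q\le\sum_{v\in T}d_{G-S-T}(v)$ absorbs $q$, and the estimate from the third case above finishes the argument.
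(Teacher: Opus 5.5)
The paper does not prove this statement; it quotes it from Christofides, K\"uhn and Osthus. So the only question is whether your argument stands on its own. Checking Tutte's condition for an even $r\approx(\delta-\varepsilon)n$ is a sound plan, but the case analysis as written has a genuine gap, and it sits exactly where the hypotheses are tight.

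Your three crude cases cover $s\le\varepsilon n/2$, $t\ge(1+\varepsilon/4)s$ and $s\ge(1+\varepsilon/4)t$. What remains is therefore $(1+\varepsilon/4)^{-1}s<t<(1+\varepsilon/4)s$, not only $t\le s\le(1+\varepsilon/4)t$.

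\emph{The region you dropped.} Consider $s<t<(1+\varepsilon/4)s$ with $s>\varepsilon n/2$.
\begin{itemize}
\item Your second bound is roughly $\varepsilon n(t-s)-\varepsilon^2 ns/5$, which is negative once $t-s<\varepsilon s/5$.
\item Your third bound is negative, because $r(s-t)<0$.
\item Trading $q$ against $e(T,V\setminus(S\cup T))$ only leaves $\theta\ge r(s-t)+2e(G[T])$, which is useless when $t>s$.
\item Your claim that components of $G-S-T$ have linear size whenever $s+t\le(1-\varepsilon)n$ is false. For instance, with $\delta=0.6$ and $s+t=0.8n$ the bound $\delta n-s-t$ is negative, so $G-S-T$ may consist of isolated vertices.
\end{itemize}

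\emph{The window you called the main obstacle is trivial.} Every component counted by $q$ sends at least one edge to $T$. Hence $q\le e(T,V\setminus(S\cup T))$, and so $\theta\ge r(s-t)+2e(G[T])\ge0$ for every pair with $s\ge t$.

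\emph{How to repair it.} Write $t=s+x$ with $x\ge1$.
\begin{itemize}
\item If $x\ge\varepsilon s/4$, use $e(S,T)\le s\Delta$ together with $\delta n-r\ge\varepsilon n-2$. This gives $\theta\ge x(\varepsilon n-2)-s(\varepsilon^2n/5+2)-q\ge x\varepsilon n/6-q$.
  \begin{itemize}
  \item If $x\ge6/\varepsilon$, this is positive since $q\le n$.
  \item Otherwise $s+t=O(\varepsilon^{-2})$, so $G-S-T$ has minimum degree above half its order. It is then connected and $q\le1$.
  \end{itemize}
\item If $1\le x<\varepsilon s/4$, the missing ingredient is that $t>s$ forces $s<n/2$, hence $\delta n-s\ge\varepsilon n$. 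With $e(S,T)\le st$ this gives $\theta+q\ge s(\delta n-s)+x(\delta n-r-s)\ge s\varepsilon n-x(s+2)\ge s\varepsilon n/2$. Since $s>4/\varepsilon$, we get $\theta>2n-q>0$.
\end{itemize}

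The split at $x=\varepsilon s/4$ works precisely because $\Delta-\delta n\le\varepsilon^2n/5$. This is the one place where that hypothesis is really used. With these three cases ($s\ge t$, and the two subcases of $t>s$), your separate small-$S$ case and the component-size estimates become unnecessary.
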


\Cref{thm:regular_subgraph} also enables us to `regularise' our incompatibility system.
More precisely, we say an incompatibility system~$\mathcal{F}$ is \emph{$d$-regular} if for every vertex $v \in V(G)$ and every edge $e$ containing $v$, there are exactly $d$ edges  $e'$ such that $\{e,e'\}\in \mathcal{F}$. 
The next proposition allows us to pass from a bounded incompatibility system to a regular one.

\begin{proposition}\label{prop:regF}
    Let  $0 < 1/n \ll \rho ,d \le 1$ and $\mu \coloneqq (1-\rho)d/2$.
    Define $\mu'$ such that $\mu'n = dn - 2\lfloor dn/8 \rfloor -1$.
Let $G$ be an $n$-vertex $d n $-regular  graph and let $\mathcal{F}$ be a $\mu n $-bounded  incompatibility system for $G$.
Then there is a $\mu'n$-regular incompatibility system $\mathcal{F}'$ for~$G$ containing~$\mathcal{F}$.
\end{proposition}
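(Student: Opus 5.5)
The plan is to work locally at each vertex and reduce the statement to \cref{thm:regular_subgraph} applied to an auxiliary `link graph'. Let $D \coloneqq dn$, so every vertex has exactly $D$ incident edges. Set $k \coloneqq 2\lfloor D/8 \rfloor$, so that $\mu' n = D - 1 - k$.

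Since $\mathcal{F} = \bigcup_v \mathcal{F}_v$ and each pair in $\mathcal{F}_v$ meets exactly in~$v$, the system decomposes vertex by vertex. It therefore suffices, for each $v \in V(G)$, to find a family $\mathcal{F}'_v \supseteq \mathcal{F}_v$ of pairs of edges at~$v$ such that every edge at~$v$ lies in exactly $\mu' n$ pairs of $\mathcal{F}'_v$. We then take $\mathcal{F}' \coloneqq \bigcup_v \mathcal{F}'_v$. Regularity holds because, for an edge $e \ni v$, the pairs $\{e,e'\} \in \mathcal{F}'$ with $e \cap e' = \{v\}$ are exactly those in $\mathcal{F}'_v$. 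Different $v$ do not interact, as a pair $\{e,e'\}$ of distinct edges determines its common vertex.

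Fix $v$ and let $L_v$ be the graph whose vertex set is the set $E_v$ of the $D$ edges at~$v$, with $ee'$ an edge of $L_v$ whenever $\{e,e'\} \in \mathcal{F}_v$. By assumption $\Delta(L_v) \le \mu n = (1-\rho)D/2$. Let $H_v$ be the complement of $L_v$ on $E_v$. Then
\[
\delta(H_v) \ge D - 1 - (1-\rho)D/2 = (1+\rho)D/2 - 1 \ge (1/2 + \rho/4)D,
\]
using that $D = dn$ is large since $1/n \ll \rho, d$. We want a spanning $k$-regular subgraph $R_v \subseteq H_v$. Given such an $R_v$, we define $\mathcal{F}'_v$ to be the edge set of the complement of $R_v$ on $E_v$, viewed as a set of pairs of edges at~$v$. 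This graph is $(D-1-k) = \mu'n$-regular, and it contains $L_v$ because $R_v \subseteq H_v$ is disjoint from $L_v$. So $\mathcal{F}_v \subseteq \mathcal{F}'_v$, as required.

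To find $R_v$, I would apply \cref{thm:regular_subgraph} to $H_v$ with $D$ vertices in place of $n$, minimum-degree proportion $\delta_v \coloneqq \delta(H_v)/D \ge 1/2 + \rho/4$, $\eps \coloneqq \rho/8$, and $\ell \coloneqq \lfloor D/8 \rfloor$. The hypothesis needed is
\[
\ell \le (\delta_v - \eps + \sqrt{2\delta_v - 1})D/4,
\]
which holds because $\delta_v - \eps \ge 1/2$ and $\ell \le D/8$. The assumption $1/D \ll \eps$ holds since $D = dn$ and $1/n \ll \rho, d$. The theorem then gives a spanning (connected) $2\ell = k$-regular subgraph $R_v$ of $H_v$.

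There is no real obstacle. The only care needed is in the constant bookkeeping: checking that the minimum degree of $H_v$ stays a constant fraction above $D/2$ (this is exactly where $\mu < d/2$ with the slack $\rho$ is used), and that $n$ being large makes $D$ large enough for \cref{thm:regular_subgraph}. The choice of $k$ as an even number, namely $2\lfloor D/8 \rfloor$, is what makes the regular-subgraph theorem directly applicable without parity issues.
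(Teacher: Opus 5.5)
Your proposal is correct and follows essentially the same route as the paper: at each vertex $v$ you take the compatibility graph on the $dn$ edges (equivalently, the neighbours) at $v$, use its minimum degree of at least $(1+\rho)dn/2-1$ to extract a spanning $2\lfloor dn/8\rfloor$-regular subgraph via \cref{thm:regular_subgraph}, and declare every pair outside this subgraph incompatible. The differences are cosmetic: you take $\eps=\rho/8$ where the paper takes $\eps=\rho/4$ with $\delta=1/2+\rho/4$, and you explicitly check that the local systems $\mathcal{F}'_v$ for different $v$ do not interact.
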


\begin{proof}
    Let $\eps \coloneqq \rho/4$ and $\delta \coloneqq 1/2 + \rho/4$, so that
    \begin{align*}
        \frac{\delta - \eps + \sqrt{2\delta - 1}}{4} \ge \frac{ 1/2 + \sqrt{{\rho}/{2} }}{4} > \frac18.
    \end{align*}
    Let $v\in V(G)$. Let $H_v$ be the graph on vertex set $N_G(v)$ such that $uw \in E(H_v)$ if and only if $\{ vu  ,vw \} \notin \mathcal{F}_v $.
    Since $G$ is $dn$-regular and $\mathcal{F}$ is $\mu n$-bounded, $H_v$ is a $dn$-vertex graph with minimum degree
    $\delta(H_v)  \ge dn -1 - \mu n = (1+\rho)dn/2 - 1 > \left(1/2 + \rho/4\right)dn$
    and so, by \cref{thm:regular_subgraph}, it has a spanning $2\lfloor dn/8 \rfloor$-regular subgraph~$H_v'$.
    
    Now define $\mathcal{F}'_v$ to be the set of all $\{ vu  ,vw \}$ such that $uw \notin E(H_v')$ and let $\mathcal{F}'\coloneqq \bigcup_{v\in V(G)}\mathcal{F}_v'$. Given $uv\in E(G)$, the number of edges $uv$ is compatible with is $d_{H_v'}(u)$ and so the number of edges it is incompatible with is $dn-1-d_{H_v'}(v)=dn - 2\lfloor dn/8 \rfloor -1 = \mu'n$. In particular, $\mathcal{F}'$ is a $\mu'n$-regular incompatibility system for $G$ containing $\mathcal{F}$, as required.
\end{proof}

\subsection{Matchings}

We will need two easy corollaries of K\"onig's theorem.

\begin{lemma}[{K\"onig's theorem \cite{konig1916graphok} (see also \cite{konig1916graphen} for a German translation)}]\label{lm:Koenig}
    Any bipartite graph $G$ can be decomposed into $\Delta(G)$ matchings.
\end{lemma}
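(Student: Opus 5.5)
We prove the equivalent statement that the edges of any bipartite graph $G$ can be properly coloured with $\Delta \coloneqq \Delta(G)$ colours; the colour classes are then the required $\Delta$ matchings. The plan is a Kempe-chain (alternating path) recolouring argument, by induction on $e(G)$ with $\Delta$ fixed: we show that every subgraph of $G$ has a proper edge-colouring with colours from $[\Delta]$. The empty graph is trivial. For the inductive step, remove one edge $uv$ from the current subgraph, properly colour the rest with colours from $[\Delta]$ by induction, and then insert $uv$.

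Since $uv$ is uncoloured and every vertex has degree at most $\Delta$, at most $\Delta-1$ colours appear at $u$ and at most $\Delta-1$ appear at $v$. So there is a colour $a$ missing at $u$ and a colour $b$ missing at $v$. If $a$ is also missing at $v$, give $uv$ colour $a$ and we are done. Otherwise $a \ne b$ and $a$ appears at $v$. Let $P$ be the maximal path starting at $v$ whose edges alternate between colours $a$ and $b$, beginning with the edge of colour $a$ at $v$. It is well defined, since each vertex has at most one edge of each colour, so the $\{a,b\}$-subgraph is a union of paths and even cycles. It is a path rather than a cycle because $v$ has no $b$-edge.

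The key step, and the only place bipartiteness is used, is to show that $P$ does not end at (or pass through) $u$. Suppose $P$ reaches $u$. Since $u$ and $v$ lie on opposite sides of the bipartition, the subpath from $v$ to $u$ has odd length. Its edges in odd positions have colour $a$, so its last edge, the one at $u$, has colour $a$. This contradicts the fact that $a$ is missing at $u$. Hence $u \notin V(P)$.

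Now swap colours $a$ and $b$ along $P$. By maximality of $P$, this keeps the colouring proper: the interior vertices still see one edge of each of $a,b$, and the far endpoint lacked the colour it now receives. Vertex $u$ is untouched, and $v$ now misses $a$, because its $a$-edge became $b$ and $b$ was missing at $v$ before. Thus $a$ is missing at both $u$ and $v$, and we colour $uv$ with $a$, which completes the induction. I expect no real obstacle beyond the parity check above.
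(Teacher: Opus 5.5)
Your proof is correct: it is the standard alternating-path (Kempe chain) argument. The parity step showing $u \notin V(P)$ handles both the case where $P$ ends at $u$ and the case where it passes through $u$, and the swap along $P$ preserves properness. The paper gives no proof of this lemma and only cites K\"onig, so there is no argument to compare against beyond noting that yours is the classical one.
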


 \begin{corollary}[{see e.g.~\cite[Exercise 7.1.33]{west2001introduction}}]\label{lm:matdecomp}
    For all $D\in \mathbb{N}$, any bipartite graph $G$ with $\Delta(G)\leq D$ can be decomposed into $D$ matchings $M_1, \dots, M_D$ such that $||M_i|-|M_j||\leq 1$ for all $i,j\in [D]$.
\end{corollary}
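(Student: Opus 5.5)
We start from König's theorem (\cref{lm:Koenig}) and then rebalance the matchings by swapping edges along alternating paths. Since $\Delta(G)\le D$, \cref{lm:Koenig} decomposes $E(G)$ into $\Delta(G)$ matchings. Adding $D-\Delta(G)$ empty matchings gives a decomposition into exactly $D$ matchings $M_1,\dots,M_D$, with no balance condition yet.

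Among all decompositions of $E(G)$ into $D$ matchings, choose one minimising the potential $\Phi\coloneqq\sum_{i\in[D]}|M_i|^2$. Suppose, for a contradiction, that $|M_i|\ge |M_j|+2$ for some $i,j$. Consider the subgraph $M_i\cup M_j$.
\begin{itemize}
\item Every vertex has degree at most $2$ in $M_i\cup M_j$, and the edges at any vertex come from different matchings.
\item So each component is an even cycle or a path whose edges alternate between $M_i$ and $M_j$.
\item Cycles and even-length paths contain equally many edges of each matching.
\item Since $|M_i|>|M_j|$, some component must be an odd path $P$ with exactly one more $M_i$-edge than $M_j$-edges. Its first and last edges lie in $M_i$.
\end{itemize}

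Now swap along $P$: set $M_i'\coloneqq M_i\triangle E(P)$ and $M_j'\coloneqq M_j\triangle E(P)$. Both remain matchings, because
\begin{itemize}
\item the interior vertices of $P$ still see one edge of each matching;
\item the endpoints of $P$ have no $M_j$-edge outside $P$, since $P$ is a maximal component of $M_i\cup M_j$.
\end{itemize}
The collection $M_1,\dots,M_{i-1},M_i',M_{i+1},\dots,M_{j-1},M_j',M_{j+1},\dots,M_D$ (in whichever order $i,j$ occur) is again a decomposition of $E(G)$. Here $|M_i'|=|M_i|-1$ and $|M_j'|=|M_j|+1$, so $\Phi$ changes by
\[
-2|M_i|+1+2|M_j|+1 = -2(|M_i|-|M_j|-1)<0,
\]
contradicting minimality. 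Hence $\big||M_i|-|M_j|\big|\le 1$ for all $i,j$.

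The only point needing care is that the swapped sets are still matchings. This holds because a maximal alternating path cannot be extended, which is exactly where bipartiteness-free local structure (maximum degree $2$ in a union of two matchings) suffices. Bipartiteness itself is used only through \cref{lm:Koenig}.
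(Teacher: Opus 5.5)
Your proof is correct and is essentially the argument the paper has in mind: the paper gives no proof of its own, only the citation to West's exercise, presenting the statement as an easy corollary of König's theorem. That standard argument is the one you give — take a König decomposition padded with empty matchings, then rebalance by swapping along an odd alternating path component of $M_i\cup M_j$; minimising $\sum_i |M_i|^2$ is simply a tidy way to package the repeated swaps.
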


\begin{corollary}\label{lem:almost_matching}
    Any $2n$-vertex balanced bipartite graph $G$ such that $d_G(v)=d\pm r$ for all vertices $v\in V(G)$, with $d+r\leq n$,
    contains a matching with at least $n - 2rn/(d + r)$ edges.
\end{corollary}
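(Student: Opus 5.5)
The plan is a pigeonhole argument: cover the edge set of $G$ by few matchings and take the largest one. The only tool needed is K\"onig's theorem (\cref{lm:Koenig}). No probabilistic or structural input is required.

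First, bound the maximum degree. Since every vertex has degree $d\pm r$, we have $\Delta(G)\le d+r$. By \cref{lm:Koenig}, $E(G)$ decomposes into $\Delta(G)$ matchings. Adding empty matchings if necessary, we may write $E(G)=M_1\cup\dots\cup M_{d+r}$ as a union of $d+r$ matchings. Here I take $d+r$ to be an integer, in line with the paper's convention of omitting floors and ceilings; otherwise use $\lfloor d+r\rfloor\le d+r$ matchings, which only helps.

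Next, count the edges. $G$ is balanced bipartite with parts $X$ and $Y$ of size $n$, and every vertex of $X$ has degree at least $d-r$. Summing over $X$ gives
\[
e(G)=\sum_{x\in X}d_G(x)\ge n(d-r).
\]

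Finally, apply pigeonhole. Some $M_i$ satisfies
\[
|M_i|\ge \frac{e(G)}{d+r}\ge \frac{n(d-r)}{d+r}= n-\frac{2rn}{d+r},
\]
which is the required matching. The hypothesis $d+r\le n$ only guarantees that the degree condition is consistent, since degrees in a bipartite graph with parts of size $n$ are at most $n$. It is not otherwise used.

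There is no real obstacle here. The one point needing care is that the number of matchings in the decomposition is at most $d+r$, and the upper bound on the degrees gives exactly this.
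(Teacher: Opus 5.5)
Your proposal is correct and is essentially the paper's proof: bound $e(G)\ge (d-r)n$, decompose $G$ into $\Delta(G)\le d+r$ matchings by K\"onig's theorem, and take the largest one to get at least $(d-r)n/(d+r)=n-2rn/(d+r)$ edges. Your remarks about integrality (since $\Delta(G)\le\lfloor d+r\rfloor$) and about the hypothesis $d+r\le n$ not being needed are also accurate.
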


\begin{proof}
    We have $e(G) \ge (d-r)n$.
    By \cref{lm:Koenig}, $G$ can be partitioned into $\Delta(G) \le d+r$ matchings. 
    Hence there exists a matching of size at least
    \begin{align*}
    \frac{e(G)}{\Delta(G)} \ge \frac{(d-r)n}{d+r} = \left( 1- \frac{2r}{d+r}\right) n = n - \frac{2rn}{d+r},
    \end{align*}
    as desired.
\end{proof}

\section{Finding a compatible linear forest}
\label{sec:linear_forest}

In this section we prove that the graph contains a compatible linear forest on many edges, that is, consisting of few paths.
We first give an overview of our approach.
 
A folklore method to construct a large linear forest with few paths in an $n$-vertex $dn$-regular graph~$G$ proceeds as follows. (Here we have $0< d <1$.)
First randomly partition $V(G)$ into equal-sized $S_1, \dots, S_k$ such that $G[S_i,S_{i+1}]$ is (almost) $d|S_i|$-regular for each $i\in [m-1]$. 
Hence for each $i\in [k-1]$,  $G[S_i,S_{i+1}]$ contains an (almost) perfect matching (by Corollary~\ref{lem:almost_matching}).
The union $\bigcup_{i\in[k-1]}M_i$ is then an (almost) spanning linear forest which consists of (roughly) $|S_1|$ paths.

We now attempt to apply this to incompatible systems.
Let $G$ be an $n$-vertex graph with $\delta(G) \geq (1/2+ \eps)n$ and a $\mu$-bounded incompatible system~$\mathcal{F}$. 
By Theorem~\ref{thm:regular_subgraph} we may now assume that $G$ is $dn$-regular for some $0<d<1$.

Let $S_1, \dots, S_k$ be as above.
A naive approach would be to choose the matchings one by one: first choose a matching~$M_1$ of~$G[S_1,S_2]$ freely, and then for each $i \ge 2$ in turn, choose a matching~$M_{i}$ in the subgraph $G_{i}^*$ of $G[S_{i},S_{i+1}]$ induced by the edges which are compatible with~$M_i$. 
There is however no guarantee that $G_{i}^*$ is anywhere close to being regular, and so $M_{i}$ could be very small.
We now highlight two main issues of this approach that prevent~$G^*_i$ from being almost regular and explain how we address them.  

Firstly, the subgraph $G_{i}^*$ is obtained from~$G[S_{i},S_{i+1}]$ by deleting all edges that are incompatible with~$M_{i-1}$.
This edge deletion is not symmetrical as it is based on how edges are intersecting at~$S_{i}$. 
We overcome this by choosing an almost perfect matching~$M_i$ in~$G[S_{i},S_{i+1}]$ for each \emph{odd} $i\in [k-1]$.
For each \emph{even} $i\in [k-1]$, $G^*_i$ is the induced subgraph of $G[S_i,S_{i+1}]$ given by the edges which are compatible with~$M_{i-1}$ and~$M_{i+1}$.

Consider even $i \in [k-1]$.
To ensure that $G^*_i$ is almost regular, it is natural to choose $M_{i-1}$ and $M_{i+1}$ randomly. 
Recall that $G[S_{i-1},S_{i}]$ is almost $d|S_1|$-regular, so an edge~$e'$ in~$G[S_{i-1},S_{i}]$ appears in $M_{i-1}$ with probability roughly~$d$ (and a similar statement for $e' \in G[S_{i+1},S_{i+2}]$).
However the probability that an edge~$uv$ lies in~$G[S_{i},S_{i+1}]$ may still vary dramatically depending on $\mathcal{F}_u$ and $\mathcal{F}_v$. 
Our second idea is to ensure that $\mathcal{F}$ is regular, which can be achieved by~\cref{prop:regF}.

For technical reasons, the random matchings~$M_i$ for odd $i\in [k-1]$ will not be sampled from the set of all large matchings of~$G[S_i,S_{i+1}]$.
We also randomly partition each~$S_j$ further into $S_j^{(1)}, \dots, S_{j}^{(\sqrt{k})}$.
The random matching~$M_i$ is now the union of an independently chosen random matching of $G[S_i^{(j)},S_{i+1}^{(j)}]$ for each~$j\in [\sqrt{k}]$.
This enables us to apply McDiarmid's inequality (\cref{thm:mcdiarmid}) to show that for even $i \in [k-1]$, $G^*_i$ is almost regular with high probability.

Therefore, our main goal of this section is to prove the following lemma. 

\begin{lemma}\label{lem:nearspanninglinearforest}
     Let $0 < 1/n \ll \mu \le 4d/5 \leq 4/5$.
     Let $G$ be an $n$-vertex $dn$-regular graph and let $\mathcal{F}$ be a $\mu n$-regular incompatibility system for $G$. Then $G$ contains an $\mathcal{F}$-compatible linear forest on at least $n - n^{32/33}$ edges. 
\end{lemma}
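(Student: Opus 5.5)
We will follow the strategy outlined above. Fix $k$ with $1/n \ll 1/k$; concretely we take $k \approx n^{1/33}$ with $k$ a perfect square and $k-1$ even. Partition $V(G)$ uniformly at random into $S_1,\dots,S_k$ of size $n/k$, and each $S_j$ further into $S_j^{(1)},\dots,S_j^{(\sqrt{k})}$ of size $n/k^{3/2}$. By \cref{lm:chernoff} and a union bound, with positive probability the following degree bounds hold for every vertex $v$ and all indices $j,\ell$:
\begin{itemize}
\item $d(v,S_j)=(d\pm n^{-1/10})|S_j|$ and $d(v,S_j^{(\ell)})=(d\pm n^{-1/10})|S_j^{(\ell)}|$;
\item the relevant compatibility counts are concentrated. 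For a vertex $v$ and an edge $vu$, the number of $w\in S_j^{(\ell)}$ with $\{vu,vw\}\in\mathcal{F}$ is $(\mu\pm n^{-1/10})|S_j^{(\ell)}|$. This uses the $\mu n$-regularity of $\mathcal{F}$ together with the fact that $u$ is fixed before $S_j^{(\ell)}$ is revealed, which we handle by the hypergeometric bound on $N(v)\setminus\{u\}$.
\end{itemize}
Fix such a partition.

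For each odd $i\in[k-1]$ and each $\ell\in[\sqrt k]$, pick $M_i^{(\ell)}$ independently and uniformly from a family $\mathcal{M}_i^{(\ell)}$ of near-perfect matchings of $G[S_i^{(\ell)},S_{i+1}^{(\ell)}]$. To make the marginals nearly uniform, we take $\mathcal{M}_i^{(\ell)}$ to be the matchings of an equitable decomposition of $G[S_i^{(\ell)},S_{i+1}^{(\ell)}]$ given by \cref{lm:matdecomp}, into $D=(d+n^{-1/10})|S_i^{(\ell)}|$ matchings. Each matching then has size $(1-O(n^{-1/10}/d))|S_i^{(\ell)}|$, and each edge lies in exactly one of them, so every edge is chosen with probability exactly $1/D$. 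Set $M_i=\bigcup_\ell M_i^{(\ell)}$. The union of the odd $M_i$ is a matching, hence compatible.

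For each even $i$, let $G_i^*$ be the subgraph of $G[S_i,S_{i+1}]$ consisting of the edges $uv$, $u\in S_i$, $v\in S_{i+1}$, such that $uv$ is compatible with the $M_{i-1}$-edge at $u$ (if any) and with the $M_{i+1}$-edge at $v$ (if any). Since the odd matchings are chosen independently, the sets of endpoints involved in different even layers are disjoint. Hence any choice of matchings $M_i\subseteq G_i^*$ for the even $i$ gives a compatible linear forest $\bigcup_i M_i$, as every pair of incident edges meets at a vertex where one odd-layer and one even-layer edge meet.

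Next we compute the expected degree in $G_i^*$. For $u\in S_i^{(\ell)}$, the $M_{i-1}$-partner of $u$ is nearly uniform on $N(u)\cap S_{i-1}^{(\ell)}$, which has size about $d|S^{(\ell)}|$. Therefore the expected number of neighbours $v\in S_{i+1}$ with $uv$ incompatible at $u$ is about $\tfrac{\mu}{d}\,d|S_{i+1}|$. The correct count is the average over partners $w$ of the number of $v\in S_{i+1}$ with $\{uw,uv\}\in\mathcal{F}$. Summed over $w$ and $v$, each $v$ contributes $\mu n$ forbidden partners, which the partition concentration converts into a density of $\mu/d$ on $S_{i-1}^{(\ell)}$. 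The same count applies at $v$ via $M_{i+1}$. We then need the two events to be nearly independent for a typical edge $uv$; they are, because they depend on different matchings $M_{i-1}$ and $M_{i+1}$. This gives
\[
\mathbb{E}[d_{G_i^*}(u)]\approx d|S_{i+1}|\Big(1-\tfrac{\mu}{d}\Big)^2,
\]
the same for every vertex. The hypothesis $\mu\le 4d/5$ keeps this linear in $|S_i|$; the $1/5$ margin is what guards against the error terms.

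The main obstacle is concentration. The degree $d_{G_i^*}(u)$ depends on one coordinate $M_{i-1}^{(\ell)}$, which only affects $u$ through its own partner and so changes the degree by up to $|S_{i+1}|$. It also depends on the $\sqrt k$ coordinates $M_{i+1}^{(\ell')}$, each of which affects at most $|S^{(\ell')}|=n/k^{3/2}$ of the neighbours $v$. We therefore condition on $M_{i-1}$, i.e.\ on $u$'s partner, and apply McDiarmid's inequality (\cref{thm:mcdiarmid}) over the $\sqrt k$ independent coordinates $M_{i+1}^{(\ell')}$, with $c_{\ell'}=n/k^{3/2}$. This gives deviation $t=n^{1-\eta}/k$ with failure probability $\exp(-\Omega(n^{-2\eta}k^{1/2}\cdot k^{... }))$. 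We tune $k=n^{1/33}$ and the exponents so that the bound is $\exp(-n^{\Omega(1)})$, and take a union bound over the $n$ vertices. Conditioned on $u$'s partner $w$, the expectation equals $d_{\mathcal{F}\text{-compatible}}$ at $u$ into $S_{i+1}$ minus the loss at the $v$ side. The first quantity is deterministic given $w$; it equals $(d-\mu)|S_{i+1}|\pm$ error by the partition concentration applied to $uw$. The loss at the $v$ side is about a factor $(1-\mu/d)$, again uniform. So with positive probability every $G_i^*$ is balanced bipartite with all degrees $d^*|S_i|\pm n^{1-\eta}/k$, where $d^*=(d-\mu)(1-\mu/d)$ (or whatever the uniform value turns out to be; only the uniformity matters). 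Applying \cref{lem:almost_matching} to each even $i$ then gives matchings of size $|S_i|-O(n^{1-\eta}/k)$. Fixing such a choice of the odd matchings, the resulting linear forest misses $O(k\cdot n^{1-\eta}/k + n^{0.9})+|S_1|$ edges, and the parameters are chosen so that this is at most $n^{32/33}$.
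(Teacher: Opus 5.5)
\textbf{Verdict: genuine (but repairable) gap.} Your construction is the paper's construction. That is: layers split into sub-parts; for odd $i$, a uniformly random member of an equitable K\"onig decomposition of each $G[S_i^{(\ell)},S_{i+1}^{(\ell)}]$; the graphs $G_i^*$ for even $i$; conditioning on the partner of $u$ and applying \cref{thm:mcdiarmid} over the sub-part matchings of layer $i+1$; and finally \cref{lem:almost_matching}. The gap is quantitative, and since the lemma is a quantitative statement, it matters. With your choice $k\approx n^{1/33}$ the argument cannot produce $n-n^{32/33}$ edges, so your closing claim that ``the parameters are chosen so that this is at most $n^{32/33}$'' is false.

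\textbf{Why $k\approx n^{1/33}$ fails.} Two things go wrong.

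First, every path in $\bigcup_i M_i$ is monotone through the layers, so each vertex of $S_1$ lies in a different component. The forest therefore has at least $n/k\approx n^{32/33}$ components, hence at most about $n-n^{32/33}$ edges. The path count alone uses up the entire budget.

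Second, and decisively, your concentration is too weak at this scale. You have $\sqrt k$ coordinates, each with Lipschitz constant $n/k^{3/2}$, so $\sum_{\ell'}c_{\ell'}^2=n^2/k^{5/2}$. For a deviation $t=n^{1-\eta}/k$ the McDiarmid exponent is $n^{-2\eta}k^{1/2}=n^{1/66-2\eta}$, which forces $\eta<1/132$. But \cref{lem:almost_matching} then loses about $2t/d^*$ edges in each of the $k/2$ even layers, i.e.\ order $n^{1-\eta}$ in total. This is below $n^{32/33}$ only when $\eta>1/33$, so the two requirements are incompatible. In general, the loss your argument can certify is at least of order $nk^{-1/4}$, so within your scheme you need $k$ at least about $n^{4/33}$.

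\textbf{The fix.} The paper takes $k=m^2$ with $m\approx n^{1/4}$, i.e.\ layers of size $m^2$ and sub-parts of size $m$. Then:
\begin{itemize}
\item $\sum c^2=m^3\approx n^{3/4}$, so a deviation of $n^{7/16}$ against degrees of order $n^{1/2}$ fails with probability only $\exp(-\Omega(n^{1/8}))$ per vertex;
\item the even layers lose $O(n^{31/32})$ edges in total;
\item the odd layers lose a negligible amount;
\item there are only about $n^{1/2}$ paths.
\end{itemize}
With that choice your argument goes through.

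\textbf{Smaller points.}
\begin{itemize}
\item Take the number of matchings to be odd, as the paper does with $m^2-1$. Then both end layers are odd and every $G_i^*$ is constrained on both sides. With your $k-1$ even, $G_{k-1}^*$ is constrained only at $S_{k-1}$, so its common degree is about $(d-\mu)|S_k|$ and it needs a separate (easy) check.
\item Your claim that the expected degree is ``the same for every vertex'' is not quite true. A vertex left uncovered by $M_{i-1}$ has expected $G_i^*$-degree about $(d-\mu)|S_{i+1}|$ rather than $(d-\mu)^2|S_{i+1}|/d$; the paper's write-up glosses over this too. Since such vertices are few, trimming their degrees before applying \cref{lem:almost_matching} handles it.
\end{itemize}
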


Before we prove this lemma, note that by~\cref{prop:regF}, the regularity condition on~$\mathcal{F}$ in \cref{lem:nearspanninglinearforest} can be relaxed to $\mu n$-boundedness.

\begin{corollary}\label{cor:nearspanninglinearforest}
    Let $0 < 1/n \ll \rho, d \leq 1$ and $\mu\coloneqq (1-\rho)d/2$.
    Let $G$ be an $n$-vertex $dn$-regular graph and let $\mathcal{F}$ be a $\mu n$-bounded incompatibility system for~$G$. 
    Then $G$ contains an $\mathcal{F}$-compatible linear forest on at least $n - n^{32/33}$ edges. 
\end{corollary}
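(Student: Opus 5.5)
The corollary follows by combining \cref{prop:regF} with \cref{lem:nearspanninglinearforest}. The plan is to enlarge $\mathcal{F}$ to a regular system and then apply the lemma to the enlarged system.

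\textbf{Step 1: regularise $\mathcal{F}$.} We are given a $dn$-regular $G$ and a $\mu n$-bounded $\mathcal{F}$ with $\mu=(1-\rho)d/2$. These are exactly the hypotheses of \cref{prop:regF}, and $1/n \ll \rho,d$ holds by assumption. The proposition produces a $\mu' n$-regular incompatibility system $\mathcal{F}'\supseteq\mathcal{F}$ for $G$, where
\[
\mu' n = dn-2\lfloor dn/8\rfloor-1 .
\]

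\textbf{Step 2: check the hypotheses of \cref{lem:nearspanninglinearforest} for $\mathcal{F}'$.} Since $2\lfloor dn/8\rfloor \ge dn/4-2$, we get
\[
\mu' \le d - \tfrac{d}{4} + \tfrac{1}{n} = \tfrac{3d}{4}+\tfrac1n \le \tfrac{4d}{5},
\]
where the last inequality uses $1/n \le d/20$, which follows from $1/n\ll d$. The lemma also needs $1/n \ll \mu'$. Here $\mu' \ge 3d/4 - 1/n \ge d/2$, so $\mu'$ is bounded below by a function of $d$, and $n$ can be taken large relative to it. Finally $d\le 1$, so $4d/5\le 4/5$.

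One subtlety is that $\mu'$ depends on $n$. This is harmless: the lemma's threshold is a function of a lower bound on $\mu'$, and I take $n_0$ large compared to $d/2$. I would phrase this monotonicity explicitly, or else apply the lemma with the constant $d/2$ as its lower-bound parameter.

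\textbf{Step 3: apply the lemma and transfer back.} \cref{lem:nearspanninglinearforest} gives an $\mathcal{F}'$-compatible linear forest with at least $n-n^{32/33}$ edges. Since $\mathcal{F}\subseteq\mathcal{F}'$, any $\mathcal{F}'$-compatible subgraph is also $\mathcal{F}$-compatible, which proves the corollary.

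There is no real obstacle here. The only point needing care is the dependence of $\mu'$ on $n$ discussed in Step 2, together with confirming $\mu'\le 4d/5$.
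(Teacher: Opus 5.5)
Your proposal is correct and follows the paper's proof exactly: regularise $\mathcal{F}$ to the $\mu'n$-regular system $\mathcal{F}'\supseteq\mathcal{F}$ via \cref{prop:regF}, check $\mu'\le 4d/5$ using $1/n\ll d$, apply \cref{lem:nearspanninglinearforest}, and use that $\mathcal{F}'$-compatibility implies $\mathcal{F}$-compatibility. Your explicit bound $\mu'\le 3d/4+1/n$ and your remark on the $n$-dependence of $\mu'$ (handled by the lower bound $\mu'\ge d/2$) spell out what the paper covers with the word ``clearly''.
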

\begin{proof}
    Define $\mu'$ such that $\mu'n= dn-2\lfloor dn/8\rfloor-1$. By \cref{prop:regF}, $G$ has a $\mu'n$-regular incompatibility system $\mathcal{F}'$ which contains $\mathcal{F}$. Clearly $\mu' \le 4d/5$ as $1/n \ll d$ and so we apply~\cref{lem:nearspanninglinearforest} to get the result.
\end{proof}

\begin{proof}[Proof of~\cref{lem:nearspanninglinearforest}]

For each edge $e \in E(G)$, vertex $v\in e$, and vertex subset $S \subseteq V(G)$, let 
\[d_{\mathcal{F}}(e, v, S) \coloneqq |\{u \in V(G)\in S : vu \in E(G) \mbox{ and } \{e, vu\} \in \mathcal{F}\}|,\]
be the number of incompatible edges (at $v$) that $e$ sends to $S$.
Let $m\in \{ \lfloor n^{1/4} \rfloor , \lfloor n^{1/4} \rfloor -1\}$ be even. 
Randomly partition $V(G)$ into $\lfloor n/m\rfloor\geq m^3$ sets of size $m$ and at most one set of size less than $m$.
Delete the set of size less than~$m$ and let $\{S_i^{(j)} : i \in [m^2],\, j \in [m]\}$ be a set of $m^3$ of those sets of size~$m$.
By Chernoff's bound for the hypergeometric distribution (\cref{lm:chernoff}), for each $v \in V(G)$, $i\in [m^2]$, and $j \in [m]$, we have
\COMMENT{$\mathbb{P}\left(d(v, S_i^{(j)}) \neq dm \pm m^{3/5} \right)\leq \mathbb{P}\left(d(v, S_i^{(j)}) \neq (1\pm m^{-2/5})\mathbb{E}[d(v, S_i^{(j)})]\right)\leq 2\exp(-\frac{dm^{1/5}}{3})\leq \exp(-\frac{dn^{1/20}}{100})$.}
\[\mathbb{P}\left(|d(v, S_i^{(j)}) - dm|> m^{3/5} \right) \leq e^{- n^{1/21}}.\]
Similarly, for each $e \in E(G)$, $v \in e$, $i \in [m^2]$ and $j \in [m]$, we have%
\COMMENT{$\mathbb{P}\left(d_{\mathcal{F}}(e, v, S_i^{(j)}) \neq \mu m\pm m^{3/5}\right) \leq \mathbb{P}[d_{\mathcal{F}}(e, v, S_i^{(j)}) \neq (1\pm m^{-2/5})\mathbb{E}[X]]\leq 2e^{-\mu m^{1/5}/3}\leq e^{-\frac{\mu n^{1/20}}{100}}$.}
\[\mathbb{P}\left(|d_{\mathcal{F}}(e, v, S_i^{(j)}) - \mu m|> m^{3/5}\right) \leq e^{-n^{1/21}}.\]
Then, with positive probability we have $d(v, S_i^{(j)}) = dm\pm m^{3/5}$ for each $v \in V(G)$, $i\in [m^2]$ and $j \in [m]$, as well as  $d_{\mathcal{F}}(e, v, S_i^{(j)}) = \mu m\pm m^{3/5}$ for each $e \in E(G)$, $v \in e$, $i \in [m^2]$ and $j \in [m]$. Fix a choice of $\{S_i^{(j)}\}_{i \in [m^2], j\in [m]}$ satisfying these conditions. For each $i \in [m^2]$, let $S_i \coloneqq \bigcup_{j \in [m]} S_i^{(j)}$.

Given $i \in [m^2-1]$ and $j, j' \in [m]$, note that $G_{i,j,j'}\coloneqq G[S_i^{(j)}, S_{i+1}^{(j')}]$ is a balanced bipartite graph with degrees $dm\pm m^{3/5}$.
Observe that for each choice of $j_1, j_2, j_3, j_4 \in [m]$, each edge $uv \in E(G_{i,j_2,j_3})$, where $u\in S_i^{(j_2)}$ and $v\in S_{i+1}^{(j_3)}$, is compatible with $ (d-\mu)m \pm 2m^{3/5}$ edges $vw \in E(G_{i+1,j_3,j_4})$ and with $ (d-\mu)m \pm 2m^{3/5}$ edges $tu \in E(G_{i-1,j_1,j_2})$.

Let $D\coloneqq dm+2m^{3/5}$. For each odd $i \in [m^2-1]$ and $j \in [m]$, let $\mathcal{M}_{i,j}=\{M_{i,j}^{(k)}: k\in [D]\}$ be the decomposition of $G_{i,j,j}$ obtained by applying \cref{lm:matdecomp} and note that
\begin{equation}\label{eq:M}
    |M_{i,j}^{(k)}|\geq \frac{(dm-m^{3/5})m}{D}-1\geq m-n^{1/5}
\end{equation}
for each $k\in [D]$. For each odd $i \in [m^2-1]$ and $j \in [m]$, let $M'_{i,j}$ be chosen uniformly at random from $\mathcal{M}_{i,j}$ and independently of other choices of $i$ and $j$. Note that $M'_i \coloneqq \bigcup_{j \in [m]} M'_{i,j}$ is a matching of $G[S_i, S_{i+1}]$.  For each even $i \in [m^2-2]$, define $G^*_i$ as the subgraph of $G[S_i, S_{i+1}]$ obtained by removing all edges that are incompatible with some edge of $M'_{i+1}$ or $M'_{i-1}$.

\begin{claim}\label{claim:almostreg}
    With probability at least $1 - e^{-n^{1/10}}$ we have
    $d_{G_i^*}(v) = (d-\mu)^2 m^2/d \pm n^{7/16}$
    for all even $i \in [m^2-2]$ and $v\in V(G_i^*)$.
\end{claim}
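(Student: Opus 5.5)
The plan is to fix an even $i\in[m^2-2]$ and a vertex $v$ of $G_i^*$, show that $d_{G_i^*}(v)$ has the claimed expectation, show it is concentrated via McDiarmid's inequality (\cref{thm:mcdiarmid}), and finish with a union bound over the at most $n$ vertices and $m^2$ indices. By symmetry (reversing the order of the parts) it suffices to treat $v\in S_i$, say $v\in S_i^{(j)}$. An edge $vw$ with $w\in S_{i+1}^{(j')}$ survives in $G_i^*$ exactly when two things hold:
\begin{itemize}
\item it is compatible at $v$ with the edge $uv$ of $M'_{i-1,j}$ covering $v$, if there is one;
\item it is compatible at $w$ with the edge $wx$ of $M'_{i+1,j'}$ covering $w$, if there is one.
\end{itemize}
The first event depends only on $M'_{i-1,j}$. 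The events of the second kind depend on the matchings $M'_{i+1,j'}$ for $j'\in[m]$, which are independent of each other and of $M'_{i-1}$.

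\emph{Expectation.} I will first condition on $M'_{i-1}$. The matchings of $\mathcal{M}_{i-1,j}$ partition $E(G_{i-1,j,j})$, so each edge at $v$ lies in exactly one of them. Hence $v$ is matched with probability $d_{G_{i-1,j,j}}(v)/D=1-O(m^{-2/5})$.

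If $v$ is matched to $u$, the set $W$ of neighbours $w\in S_{i+1}$ with $vw$ compatible with $uv$ is determined. Summing the estimates on $d(v,S_{i+1}^{(j')})$ and $d_{\mathcal{F}}(uv,v,S_{i+1}^{(j')})$ over $j'\in[m]$ gives $|W|=(d-\mu)m^2\pm 2m^{8/5}$.

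Next fix $w\in W\cap S_{i+1}^{(j')}$. Its partner $x$ under $M'_{i+1,j'}$ is uniform among the neighbours of $w$ in $S_{i+2}^{(j')}$, up to an unmatched probability $O(m^{-2/5})$. By the observation preceding the claim, $vw$ is compatible at $w$ with $(d-\mu)m\pm 2m^{3/5}$ of these neighbours. So $\mathbb{P}(vw\in G_i^*)=(d-\mu)/d\pm O(m^{-2/5})$.

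Summing over $w\in W$ gives
\[
\mathbb{E}\bigl[d_{G_i^*}(v)\mid M'_{i-1}\bigr]=\frac{(d-\mu)^2m^2}{d}\pm O(m^{8/5}),
\]
and $m^{8/5}\approx n^{2/5}\ll n^{7/16}$.

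\emph{Concentration.} Conditionally on $M'_{i-1}$, $d_{G_i^*}(v)$ is a function of the $m$ independent choices $M'_{i+1,j'}$, $j'\in[m]$. Changing $M'_{i+1,j'}$ only affects edges $vw$ with $w\in S_{i+1}^{(j')}$, so it changes the degree by at most $c_{j'}=m$. McDiarmid's inequality with $t=n^{7/16}/2$ and $\sum c_{j'}^2=m^3\le n^{3/4}$ gives a failure probability at most $2\exp(-n^{1/8}/4)$. A union bound over $v$ and $i$ (polynomially many events) then gives total failure probability at most $e^{-n^{1/10}}$.

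\emph{Main obstacle.} The step I expect to be delicate is the vertex $v$ itself being left uncovered by $M'_{i-1,j}$. This happens with probability about $m^{-2/5}$, which is not small enough for a union bound. For such $v$ there is no restriction at $v$, and the degree would be about $(d-\mu)m^2$ rather than $(d-\mu)^2m^2/d$.

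Uncovered \emph{neighbours} $w$ are harmless: they only contribute $O(m^{8/5})$ to the error term. For $v$ I would try first to read the claim as concerning vertices covered by $M'_{i\pm1}$, or else to add a fixed dummy edge at each uncovered vertex so that it is treated as if covered. Either way, the uncovered vertices number at most $n^{1/5}$ per piece $S_i^{(j)}$ by \eqref{eq:M}. They can be discarded at a cost of $O(n^{1/5}m^3)\ll n^{32/33}$ vertices in the final linear forest, so this does not affect the use of the claim in \cref{lem:nearspanninglinearforest}.
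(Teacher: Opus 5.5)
Your argument is the paper's: condition on $M'_{i-1,j}$, count the matchings in $\mathcal{M}_{i+1,j'}$ whose edge at $w$ is compatible with $vw$ to get the conditional expectation, apply McDiarmid with $c_{j'}=m$ over the $m$ independent choices $M'_{i+1,j'}$, and finish with a union bound. The obstacle you flag is genuine, and the paper's proof passes over it by simply writing ``let $uv$ be the edge of $M$ incident with $v$''; in fact $e(G_{i-1,j,j})\le (D-m^{3/5})m$ and the $D$ matchings of the equitable decomposition differ in size by at most one, so none of them is perfect, uncovered vertices always exist and typically have degree about $(d-\mu)m^2$ in $G_i^*$, and hence the claim fails as literally stated, whereas your fixed-dummy-edge repair works verbatim (any edge of $G$ at $v$ has $\mu m\pm m^{3/5}$ incompatible partners in each $S_{i+1}^{(j')}$, deleting the extra edges keeps the resulting forest compatible, and no vertices need to be discarded).
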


\begin{proof}
    Let $i \in [m^2-2]$ be even, $v \in S_i$, and let $j \in [m]$ be such that $v \in S_i^{(j)}$.
    Let $X$ be the event that $\left| d_{G_i^*}(v) - (d-\mu)^2 m^2/d \right| \geq n^{7/16}$.
    By symmetry and the union bound, it suffices to show that $X$ occurs with probability at most $e^{-n^{1/9}}$.
    Moreover, note that if we can show that
    $\mathbb{P}(X \mid M'_{i-1,j} = M ) \leq e^{-n^{1/9}}$
    for all $M\in \mathcal{M}_{i-1,j}$, then
    \[ \mathbb{P}(X) = \sum_{ M\in \mathcal{M}_{i-1,j} } \mathbb{P}(X \mid M'_{i-1, j} = M) \cdot \mathbb{P}(M'_{i-1,j} = M) \leq e^{-n^{1/9}}.\]
    It therefore suffices to show that $\mathbb{P}(X \mid M'_{i-1,j} = M ) \leq e^{-n^{1/9}}$ for all $M\in \mathcal{M}_{i-1,j}$.
    
    Fix $M\in \mathcal{M}_{i-1,j}$ and suppose that $M'_{i-1, j} = M$. Let $uv$ be the edge of $M$ that is incident with $v$. 
    Recall that $uv$ is compatible with $\left((d-\mu)m \pm 2m^{3/5}\right)m$ distinct edges $vw\in E(G)$ with $w \in S_{i+1}$.
    Moreover, for any $j'\in [m]$ and $w \in N_{G}(v)\cap S_{i+1}^{(j')}$, there are $(d-\mu)m \pm 2m^{3/5}$ matchings in $\mathcal{M}_{i+1,j'}$ whose edge incident to $w$ is compatible with $vw$.
    Thus, 
    \begin{align*}
        \mathbb{E}_{v,i,j,M}&\coloneqq \mathbb{E}[d_{G_i^*}(v) \mid M'_{i-1, j} = M] = ((d-\mu)m \pm 2m^{3/5}) m \cdot ((d-\mu)m \pm 2m^{3/5})\frac{1}{D}\\
        &= \frac{(d-\mu)^2m^3}{dm+2m^{3/5}}  \pm m^{17/10} =\frac{(d-\mu)^2m^2}{d}\pm \frac{n^{7/16}}{2}.
    \end{align*}
    We will now apply McDiarmid's inequality (\cref{thm:mcdiarmid}). Observe that, after conditioning on $M'_{i-1, j} = M$, changing our choice of matching $M'_{i+1, j'}$ for any given $j' \in [m]$ only alters $d_{G_i^*}(v)$ by at most $m$ (whereas changing any other matching does not affect it at all). So we have
    \begin{align*}
        \mathbb{P}\left(X \mid M'_{i-1, j} = M \right)&\leq \mathbb{P}\left(\left| d_{G_i^*}(v) -\mathbb{E}_{v,i,j,M} \right| \geq \frac{n^{7/16}}{2} \quad \mid \quad M'_{i-1, j} = M\right) \\
        &\leq 2 \exp\left( \frac{- n^{7/8}}{4m^3}\right) \leq e^{-n^{1/9}},
    \end{align*}
    as desired.
    \end{proof}

    Therefore, there exists a choice of $\{M_{i'}' \colon i'\in [m^2-1],\, i' \equiv 1 \mod 2\}$ such that $d_{G_i^*}(v)=(d-\mu)^2 m^2/d \pm n^{7/16}$ for all even $i\in [m^2-2]$ and $v\in V(G_i^*)$.
    Since $(d-\mu)^2 m^2/d - n^{7/16} \ge dm^2/25 - n^{7/16}  \geq n^{15/32}$, \cref{lem:almost_matching} (applied with $m^2, (d-\mu)^2m^2/d$, and $n^{7/16}$ playing the roles of $n, d$, and $r$) implies that for each even $i\in [m^2-2]$, the graph $G^*_i$ contains a matching $M'_i$ with at least
    \[m^2 - \frac{2n^{7/16}m^2 }{n^{15/32}} \geq \left(1 - \frac{2}{n^{1/32}} \right)m^2\]
    edges. Recalling \eqref{eq:M} and that $m$ is even, we observe that $\{M_i': i\in [m^2-2]\}$
    induces a compatible linear forest on at least
    \begin{align*}
        (m-n^{1/5})m\cdot \frac{m^2}{2}+\left(1 - \frac{2}{n^{1/32}} \right)m^2\cdot \left( \frac{m^2}2 - 1 \right)\geq\left(1 - \frac{2}{n^{1/32}} \right) m^4\geq n-n^{32/33}
    \end{align*}    
    edges, as desired.
    \end{proof}

\section{Connecting}\label{sec:connecting}

In this section we will prove that given any pair of edges, there is a compatible path joining them that is not too long. This lemma will also be crucial for constructing the absorbing structure in Section~\ref{sec:absorbing}.

\begin{lemma}\label{lem:connecting}
Let $0< 1/n \ll \alpha \ll 1/\ell \ll \rho,\varepsilon$, and let $\delta \geq 1/2 + \eps$ and $\mu \coloneqq (1 - \rho)\delta/2$. Let $G$ be an $n$-vertex graph with $\delta(G)\geq \delta n$, and let $\mathcal{F}$ be a $\mu n$-bounded incompatibility system for $G$. Let $S\subseteq V(G)$ with $|S|\leq \alpha n$. Then, for any disjoint $x_1x_1',x_2x_2' \in E(G)$, there is a compatible path $x_1x_1'P x_2'x_2$ in $G$ of length at most $\ell$ and such that $V(P)\cap S\subseteq \{x_1',x_2'\}$.
\end{lemma}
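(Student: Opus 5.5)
Starting from the edge $x_1x_1'$, I will grow a large family of short compatible paths, do the same from $x_2x_2'$, and then join the two families by a single compatible edge. For $i\in\{1,2\}$ and $k\ge 1$, let $R_i^k$ be the set of vertices $y\notin S\cup\{x_1,x_2,x_1',x_2'\}$ that are endpoints of a compatible path $x_ix_i'\dots y$ of length at most $k+1$ avoiding $S$ (apart from $x_i'$) and $x_{3-i},x_{3-i}'$. For each such $y$ I also remember the last edge $e_y$ of one witnessing path. The available freedom at each step is as follows. At $x_i'$, the edge $x_ix_i'$ forbids at most $\mu n$ of the at least $\delta n$ neighbours of $x_i'$, and $S$ removes at most $\alpha n$ more. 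So $|R_i^1|\ge (\delta-\mu-2\alpha)n\ge (\delta/2+\rho\delta/2-2\alpha)n$. This is already more than $n/4$, but not more than $n/2$, which is what a crude pigeonhole argument would need.

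To do better I will count ordered pairs. Given a vertex $y$ reached via $e_y$, it has at least $(\delta-\mu-\alpha)n - \ell$ neighbours $z$ such that $e_yyz$ is compatible and $z$ is outside $S$ and outside the (length at most $\ell$) path. Such a $z$ may lie on the path to $y$, but that costs only $O(\ell)$ choices. The key step is an expansion claim: if $|R_i^k|\le (1-\delta/2+\rho\delta/8)n$ (say), then $|R_i^{k+1}|\ge |R_i^k|+\beta n$ for some $\beta=\beta(\rho,\varepsilon)>0$. Since $1/\ell\ll\rho,\varepsilon$, after $O(1/\beta)$ steps both $R_1$ and $R_2$ will have size larger than $(1-\delta/2+\rho\delta/8)n$.

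To conclude, I pick any $y\in R_1$ with witness edge $e_y$. Its number of good neighbours is at least $(\delta-\mu)n-o(n)=(\delta/2+\rho\delta/2)n-o(n)$. Adding this to $|R_2|>(1-\delta/2+\rho\delta/8)n$ gives a total exceeding $n$ by roughly $\rho\delta n/2$, so the two sets overlap substantially. Hence the intersection contains at least about $\rho\delta n/2$ vertices $z$ with $z\in R_2$ and $e_yyz$ compatible. It remains to make $yz$ compatible with $e_z$ at $z$ as well. Because of this, I will not take $y$ arbitrary. Instead I count pairs $(y,z)$ with $y\in R_1$, $z\in R_2$, where $yz$ is compatible with both $e_y$ and $e_z$. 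Each vertex forbids at most $\mu n$ edges at itself, and each $y\in R_1$ has at least $(\delta-\mu)n-o(n)$ acceptable $z$. Summing over $y\in R_1$ and subtracting the pairs killed at the $z$ side ($\le \mu n$ per $z$) will give a positive count, provided $|R_1|(\delta-\mu - (1-|R_2|/n))n > |R_2|\mu n$. To make this inequality comfortable, I will grow $R_i$ to size $(1-o(1))n$ rather than just past the threshold above.

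Finally, the two paths must be disjoint. I will handle this by making the witness paths short, of length $O(1/\beta)$, and excluding their $O(\ell)$ vertices, which costs only $o(n)$. Alternatively, if a concatenated walk repeats vertices, I shortcut it, but that would break compatibility. So the cleaner option is to require $z\notin V(P_y)$, which removes $O(\ell)$ choices, and to argue that the paths reaching $z$ can be chosen to avoid $V(P_y)$. I do this by choosing, for the $R_2$ side, paths that avoid a fixed set $P_y$ from the start (redefine $R_2$ relative to $P_y\cup S$, noting that $|P_y\cup S|\le 2\alpha n$).

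The main obstacle will be the expansion step. The difficulty is that different witness edges $e_y$ into the same vertex $y$ forbid different sets. A set $R$ of size about $(1-\delta/2)n$ could be closed under compatible extension, since the good neighbours of $y$ number only about $(\delta/2)n$. The counting must therefore use the minimum degree $\delta>1/2$ together with the edges entering $R$ from outside. What I would try first is the following. Suppose $R=R_i^k$ is closed, that is, $R_i^{k+1}\subseteq R\cup o(n)$. Then each $y\in R$ has at least $(\delta/2+\rho\delta/2)n$ good neighbours inside $R$. In addition, I will keep for each $y$ several witness edges (one per distinct predecessor). The union of the allowed sets over a predecessor set of size larger than $\mu n$ covers all of $N(y)$ except those $z$ that are forbidden with every predecessor. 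Using that each $yz$ is incompatible with at most $\mu n$ edges at $y$, if $y$ has more than $\mu n$ predecessors in $R$ then every neighbour of $y$ is reachable. So $N(y)\subseteq R$, which forces $|R|\ge\delta n$. Iterating this argument should push $R$ up to nearly all of $V(G)$.
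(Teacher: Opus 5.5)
Your proposal has a genuine gap at the expansion step. You flag this step as the main obstacle yourself, and it is essentially the whole content of the lemma. Your observation that a vertex with more than $\mu n$ compatible predecessors can be left along \emph{every} edge is the right one: it is exactly the paper's set $V_i$ of vertices of in-degree at least $(\mu+\eta)n$ in $D_{i-1}$. But nothing in your sketch shows that many vertices have this property. Averaging over a closed $R$ only shows that the \emph{average} number of compatible predecessors is about $(\delta-\mu)n>\mu n$. That is consistent with a few vertices of huge in-degree and a large remainder whose in-degrees stay below $\mu n$. This is precisely the case that must be excluded, and ``iterating should push $R$ up to nearly all of $V(G)$'' does not exclude it.

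The missing idea is the paper's double count of $e_{D_i}(U_i,W_i)$, where $W_i=U_i\setminus V_i$:
\begin{itemize}
\item Since every edge leaving a vertex of $V_i$ is usable, $e_{D_i}(V_i,W_i)=e_D(V_i,W_i)=e_D(W_i,V_i)\ge e_{D_i}(W_i,V_i)$, and hence $e_{D_i}(U_i,W_i)\ge e_{D_i}(W_i,U_i)$.
\item If $U_i$ stops growing, then together with the out-degree bound $d^+_{D_i}(y)\ge(\delta-\mu-\rho/4)n$ for $y\in U_i$ this gives $e_{D_i}(W_i,U_i)\gtrsim(\delta-\mu-\rho/4)n|W_i|$. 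That out-degree bound is itself an averaging over in-edges which needs the robust definition of $D_i$.
\item The in-degree bound on $W_i$ gives $e_{D_i}(U_i,W_i)\lesssim(\mu+\eta)n|W_i|$.
\end{itemize}
These two estimates contradict $\mu<\delta/2$ unless $W_i$ is tiny. In that case $\delta(G)\ge(1/2+\varepsilon)n$ forces $|V_i|\ge(1/2+\varepsilon/2)n$.

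Your expansion claim as stated also stops at $(1-\delta/2+\rho\delta/8)n$. Your joining inequality, however, needs $|R_1|,|R_2|$ roughly above $(1-\rho\delta)n$, so even granting the claim the argument does not close.

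The joining step has a second, independent problem. Your count of good pairs $(y,z)$ relies on each $z\in R_2$ having one fixed witness $e_z$, so that $z$ kills at most $\mu n$ pairs. To make the two paths disjoint, though, you redefine $R_2$ relative to $V(P_y)$. Then $e_z$ depends on $y$ and this bound is lost: for a fixed $y$, nothing prevents every witness $e_z$ from being incompatible with $zy$. With a single witness path per vertex you also have no mechanism for re-routing around a prescribed set.

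The paper resolves both issues through robustness. An edge $yz$ enters $D_{i+1}$ only if it is compatible with at least $\eta^2 n$ edges of $D_i$ entering $y$. This allows paths to be built backwards greedily while avoiding any set of size $\eta^2 n/2$. The two paths are then joined at a vertex $v_0\in V_t^{x_1x_1'}\cap V_t^{x_2x_2'}$, which exists because both sets exceed $n/2$. Since $v_0$ is fully reachable from $x_2x_2'$, the second path can be required to end with the edge $v_0v_1$, which is the last edge of the first path traversed backwards. So compatibility at the meeting point is automatic, and no pair-counting is needed.
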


Here, we sketch the proof idea. 
Our aim is to find a large vertex set~$V^{x_1x'_1}$ such that, for any edge $vu$ with $v \in V^{x_1x'_1}$, there are many short compatible paths $x_1x_1'P vu$.
The lemma will then follow from the fact that $V^{x_1x'_1} \cap V^{x_2x'_2} \ne \emptyset$.
To bound the size of~$V^{x_1x'_1}$ from below, we define an auxiliary digraph~$D_i$, where $uv \in E(D_i)$ if there are many compatible paths $x_1x'_1 v_1 v_2\dots v_{i}$ with $(v_{i-1} v_i) = (u,v)$. 
We show that $D_i$ gets denser and denser as $i$ increases.  
More importantly, we will show that if a vertex $v$ has indegree greater than~$(\mu +\eps)n$, then  $v \in V^{x_1x_1'}$.
A similar idea was used in~\cite{AsymptoticBollobasErdos}.

\begin{proof}[Proof of \cref{lem:connecting}] 
Fix an additional constant $\eta$ such that $1/\ell \ll \eta \ll \rho, \eps$ and denote $t\coloneqq \lceil3/\eta\rceil$.

It will be convenient throughout the proof to view edges as directed, so for clarity we consider the digraph $D$ obtained from $G$ by replacing each undirected edge $uv \in E(G)$ with both directed edges $uv$ and $vu$.
We generalise the notion of compatibility to digraphs in the natural way and, with a slight abuse notation, given two directed edges $uv,vw$, we write $\{uv,vw\}\in \mathcal{F}$ when this holds when considered as undirected edges.

Let $uv\in E(D)$. Denote
\[U^{uv}_1 \coloneqq \{w \in N_D^+(v): \{uv,vw\}\notin \mathcal{F}\}
\text{ and }
V_{1}^{uv} = \emptyset,
\]
and define $D_1^{uv}$ to be the spanning subdigraph of~$D$ such that
\[E(D_1^{uv}) = \{yz \in E(D): y \in U_1^{uv} \mbox{ and } \{vy,yz\}\notin \mathcal{F}\}.\]
For each $i \geq 1$, denote
\begin{align*}
U_{i+1}^{uv} \coloneqq \{w \in V(D): d^-_{D_i^{uv}}(w) \geq \eta n\}
\text{ and }
 V_{i+1}^{uv}  \coloneqq 
\{w \in V(D): d^-_{D_{i}^{uv}}(w) \geq (\mu + \eta)n\},
\end{align*}
and define $D_{i+1}^{uv}$ to be the spanning subdigraph of~$D$ such that
\begin{align*}
       E(D_{i+1}^{uv}) =  E(D_i^{uv})   \cup  \{yz \in E(D):  y \in U_{i+1}^{uv} \mbox{ and } \left| \left\{xy \in E(D_i^{uv}): \{xy,yz\}\notin \mathcal{F}\}\right|\geq \eta^2n \right\}.
\end{align*}
Note that clearly $V_{i}^{uv} \subseteq U_{i}^{uv}$ and $D_{i}^{uv} \subseteq D_{i+1}^{uv}$ for each $i \geq 1$ and this in turn gives $U_{i}^{uv} \subseteq U_{i+1}^{uv}$ for each $i \geq 2$ since $d^-_{D_i^{uv}}(v) \geq \eta n$ implies $d^-_{D_{i+1}^{uv}}(v) \geq \eta n$, and similarly, $V_i^{uv}\subseteq V_{i+1}^{uv}$.

The following claim states that we can construct short compatible paths from $uv$ to any vertex in $V_t^{uv}$.

\begin{claim}\label{claim:path}
    Let $uv, v_0w\in E(D)$ be disjoint with $v_0\in V_t^{uv}$. Then, for any $S'\subseteq V(D)$ with $|S'|\leq \eta^2n/2$, there exists a compatible path $P=uv\dots v_0w$ in $D$ of length at most $t+2$ and such that $V(P)\cap (S\cup S')\subseteq \{u,v,v_0,w\}$.
\end{claim}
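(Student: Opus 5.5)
The plan is to build the path backwards from $v_0w$. At each step we pick a new predecessor edge in one of the digraphs $D_i^{uv}$, and the index $i$ strictly decreases until we reach $D_1^{uv}$, whose edges hook directly onto $uv$. For an edge $yz\in E(D_{t}^{uv})$, define its \emph{level} as the least $i\ge 1$ with $yz\in E(D_i^{uv})$. By construction, if $yz$ has level $i\ge 2$, then $y\in U_i^{uv}$ and there are at least $\eta^2 n$ edges $xy\in E(D_{i-1}^{uv})$ with $\{xy,yz\}\notin\mathcal F$. If $yz$ has level $1$, then $y\in U_1^{uv}$, so $\{uv,vy\}\notin\mathcal F$, $\{vy,yz\}\notin\mathcal F$ and $vy\in E(D)$.

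\textbf{First step.} Since $v_0\in V_t^{uv}$, we have $d^-_{D_{t-1}^{uv}}(v_0)\ge(\mu+\eta)n$. At most $\mu n$ of the edges $xv_0$ are incompatible with $v_0w$. Hence at least $\eta n$ in-neighbours $x$ of $v_0$ in $D_{t-1}^{uv}$ satisfy $\{xv_0,v_0w\}\notin\mathcal F$. Among these we choose $x_1$ outside $S\cup S'\cup\{u,v,w\}$. This is possible because $\eta n>\alpha n+\eta^2n/2+3$, using $\alpha\ll\eta$.

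\textbf{Iteration.} Suppose we have a compatible path $x_kx_{k-1}\dots x_1v_0w$ with $x_1,\dots,x_k\notin S\cup S'\cup\{u,v,w\}$, and with $x_kx_{k-1}$ an edge of level $i_k$, where $i_1\le t-1$ and $i_k<i_{k-1}$. If $i_k\ge 2$, we pick $x_{k+1}$ such that $x_{k+1}x_k\in E(D_{i_k-1}^{uv})$ and $\{x_{k+1}x_k,x_kx_{k-1}\}\notin\mathcal F$. There are at least $\eta^2n$ candidates, and we need to avoid $S\cup S'\cup\{u,v,w,v_0,x_1,\dots,x_k\}$. That set has size at most $\alpha n+\eta^2n/2+t+4<\eta^2 n$, using $1/n\ll\alpha\ll\eta$. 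The new edge $x_{k+1}x_k$ has level at most $i_k-1<i_k$, so the levels strictly decrease and the process stops after at most $t-1$ steps with an edge $x_kx_{k-1}$ of level $1$.

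\textbf{Closing the path.} At that point $x_k\in U_1^{uv}$, so $vx_k\in E(D)$, $\{uv,vx_k\}\notin\mathcal F$ and $\{vx_k,x_kx_{k-1}\}\notin\mathcal F$. Then
\[P=uvx_kx_{k-1}\dots x_1v_0w\]
is a path, since all $x_j$ are distinct and avoid $u,v,w,v_0$. Consecutive pairs of edges are compatible by the choices made, and $\mathcal F$ only involves intersecting edges, so $P$ is compatible. It has length at most $(t-1)+3=t+2$, and its only possible vertices in $S\cup S'$ are $u,v,v_0,w$.

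The only delicate point is that the vertex we attach to $v$ is determined by the level-$1$ edge itself. It is harmless here because that vertex was one of the $x_j$ chosen freely, so it already avoids $u$, $v$ and the forbidden sets.
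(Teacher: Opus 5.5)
Your proof is correct and is essentially the paper's argument. You build the path backwards from $v_0w$, at each step choosing a predecessor edge in $D_{i-1}^{uv}$ that is compatible with the current edge, of which there are at least $\eta^2 n$, against only $\alpha n+\eta^2n/2+O(t)$ forbidden vertices. You stop at an edge of $D_1^{uv}$ and attach it to $uv$ through $U_1^{uv}$. The paper phrases the same thing as a maximal-path contradiction starting from $v_0w\in E(D_t^{uv})$. Your explicit use of the minimal index (``level'') makes the step ``by definition of $D_{i'}^{uv}$'' fully rigorous, since that step needs the edge to have been added at stage $i'$ rather than earlier.
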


\begin{proofclaim}
    Let $i\geq 0$ be maximum such that there exists a compatible path $v_{i}v_{i-1}\dots v_0w$ in $D$ such that $v_i, \dots, v_1\in V(D)\setminus (S\cup S'\cup \{u,v,v_0,w\})$ and $v_iv_{i-1}\in E(D_{i'}^{uv})$ for some $i'\leq t-i$.
    If $v_iv_{i-1}\in E(D_1^{uv})$, then $v_i\in U_1^{uv}$ and so $uvv_{i}v_{i-1}\dots v_0w$ is a compatible path by definition of $U_1^{uv}$ and $D_1^{uv}$, as desired.
    If $v_iv_{i-1}\notin E(D_1^{uv})$, then by definition of $D_{i'}^{uv}$, there is at least $\eta^2n-|S|-|S'|-4-t\geq 1$ vertex $v_{i+1}\in V(D)\setminus (S\cup \{u,v,v_i, \dots, v_0, w\})$ such that $v_{i+1}v_i\in E(D_{i'-1}^{uv})$ and is compatible with $v_iv_{i-1}$, a contradiction to the maximality of $i$.
\end{proofclaim}

Suppose that there exists $v_0 \in \left( V_t^{x_1x_1'}\cap V_t^{x_2x_2'} \right) \setminus (S\cup \{x_1,x_1',x_2,x_2'\})$. Then, applying \cref{claim:path} with $x_1x_1'$ and $\{x_2,x_2'\}$ playing the roles of $uv$ and $S'$, and an arbitrary edge $v_0w\in E(D)$, we obtain a compatible path $P_1=x_1x_1'\dots v_1v_0$ in $D$ of length at most $t+1$ and with $V(P_1)\cap (S\cup S' \cup \{x_2,x_2'\})\subseteq\{x_1,x_1'\}$. 
Now applying \cref{claim:path} again with $x_2x_2',V(P_1)$, and $v_0v_1$ playing the roles of $uv,S'$, and $v_0w$ gives a compatible path $P_2=x_2x_2'\dots v_0v_1$ in $D$ of length at most $t+2$ and such that $V(P_2)\cap (S\cup V(P_1))\subseteq \{x_2,x_2',v_0,v_1\}$. By definition of $D$, the undirected path $x_1x_1'\dots v_1v_0 \dots x_2'x_2$ is a compatible path of $G$, of length at most $2t+2$ and such that $V(P)\cap S\subseteq \{x_1,x_1',x_2,x_2'\}$.
Thus, it suffices to show that $|V_t^{uv}|\geq (1+\varepsilon) n/2$ for all $uv \in E(D)$.

Let $uv\in E(D)$.
For ease of notation, given $i\geq 1$, we write $U_i, V_i, D_i$ in place of $U_i^{uv}, V_i^{uv}, D_i^{uv}$ and denote $W_i \coloneqq U_i \setminus V_i$. 
We start with a few easy observations.  Since $\delta(G)\geq \delta n$ and $\mathcal{F}$ is $\mu n$-bounded, given $xy\in E(D)$, there are at least $(\delta - \mu )n - 1$ vertices $z\in N_D^+(y)$ such that $xy$ and $yz$ are compatible. Together with the definition of~$U_2$, we obtain
\[\frac{(\delta - \mu)^2n^2}{2}\leq |U_1|((\delta- \mu)n -1) \leq e(D_1)\leq |U_2|n + \eta n ^2,\]
which implies that
\begin{equation}\label{eq:sizeofU2}
    |U_2| \geq \frac{(\delta - \mu)^2n}{2} - \eta n \geq 2\eta^{1/2}n.
\end{equation}
Moreover, we claim that for each $i \geq 2$ and $y \in U_{i}$, we have
\begin{equation}\label{eq:outdeginEi}d^+_{D_{i}}(y) \geq d^+_D(y) - (\mu + \rho/4)n \geq (\delta -  \mu - \rho/4)n.\end{equation}
Indeed, suppose not for a contradiction. Then, there are at least $(\mu + \rho/4)n$ edges $yz\in E(D)$ which are incompatible with at least $d^-_{D_{i-1}}(y) - \eta^2 n \geq (1 - \eta)d^-_{D_{i-1}}(y)$ edges $xy\in D_{i-1}$ (where we used that $d^-_{D_{i-1}}(y) \geq \eta n$ by definition of~$U_i$). By averaging, there is some $xy \in E(D_{i-1})$ which is incompatible with at least $(1 - \eta)(\mu + \rho/4)n > \mu n$ edges $yz\in E(D)$, a contradiction.

Observe that if $yz\in E(D)$ with $y\in V_i$, then by definition of~$V_i$ and since $\mathcal{F}$ is $\mu n$-bounded, $yz$ is compatible with at least $\eta n\geq \eta^2 n$ edges $xy\in E(D_{i-1})$ and so $yz\in E(D_i)$. In other words,
\begin{equation}\label{eq:edgesfromYi}D_i[V_i, V(D)] = D[V_i, V(D)]\end{equation}
and so, in particular,
\begin{equation}\label{eq:Xi-Yi}
    e_{D_i}(W_i, V_i) \leq e_D(W_i, V_i) = e_D(V_i, W_i) =  e_{D_i}(V_i, W_i).
\end{equation}

The following claim analyses the growth rates of $|U_{i}|$ and $|V_i|$.

\begin{claim}\label{clm:growth}
    For each $uv\in E(D)$ and $i \geq 2$, one of the following holds. 
\begin{enumerate}[label = \textnormal{(\roman*)}]
    \item $|V_{i+1}| \geq |V_i| + \eta n$;\label{Yeta}
    \item $|U_{i+1}| \geq |U_i| + \eta n$;\label{Ueta}
    \item $|V_i| \geq (1/2 + \eps/2)n$.\label{Ylarge}
\end{enumerate}
\end{claim}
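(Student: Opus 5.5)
The plan is to argue by contradiction and double count edges of $D_i$. Fix $uv$ and $i\ge 2$, and suppose that none of \ref{Yeta}, \ref{Ueta}, \ref{Ylarge} holds. Then $|U_{i+1}\setminus U_i|<\eta n$ and $|V_{i+1}\setminus V_i|<\eta n$ (recall $U_i\subseteq U_{i+1}$ and $V_i\subseteq V_{i+1}$). Writing $W':=U_{i+1}\setminus V_{i+1}$, this gives $|W'|\le |W_i|+\eta n$. Every edge of $D_i$ starts in $U_i$, by construction. The idea is to compare the out-degree lower bounds for $U_i$ coming from \eqref{eq:outdeginEi} and \eqref{eq:edgesfromYi} with the in-degree upper bounds that the definitions of $U_{i+1}$ and $V_{i+1}$ impose on vertices outside them.

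\emph{Step 1 (upper bound).} Split the edges of $D_i$ according to where their heads land.
\begin{itemize}
\item Vertices outside $U_{i+1}$ have in-degree below $\eta n$ in $D_i$, so they receive fewer than $\eta n^2$ edges in total.
\item Vertices of $W'$ have in-degree below $(\mu+\eta)n$, so they receive at most $|W_i|(\mu+\eta)n+\eta n^2$ edges.
\item Edges landing in $V_{i+1}$ either land in $V_{i+1}\setminus V_i$, giving at most $\eta n^2$ edges, or land in $V_i$. Those landing in $V_i$ come from $V_i$ or from $W_i$. By \eqref{eq:Xi-Yi}, $e_{D_i}(W_i,V_i)\le e_D(V_i,W_i)$, so altogether at most $e_D(V_i,V_i)+e_D(V_i,W_i)=e_D(V_i,U_i)$ edges land in $V_i$.
\end{itemize}
Hence
\[ e(D_i)\le e_D(V_i,U_i)+|W_i|\mu n+4\eta n^2. \]

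\emph{Step 2 (lower bound).} By \eqref{eq:edgesfromYi}, $e(D_i)\ge e_D(V_i,V(D))+e_{D_i}(W_i,V(D))$. By \eqref{eq:outdeginEi}, $e_{D_i}(W_i,V(D))\ge |W_i|(\delta-\mu-\rho/4)n$. Comparing with Step 1 gives
\[ e_D\bigl(V_i,V(D)\setminus U_i\bigr)+|W_i|(\delta-2\mu-\rho/4)n\le 4\eta n^2 . \]
Since $2\mu=(1-\rho)\delta$ and $\delta\ge1/2$, we have $\delta-2\mu-\rho/4=\rho\delta-\rho/4\ge\rho/4$. Both conclusions follow:
\begin{itemize}
\item $|W_i|\le 16\eta n/\rho\le\eta^{1/2}n$, using $\eta\ll\rho$;
\item $e_D(V_i,V(D)\setminus U_i)\le 4\eta n^2$.
\end{itemize}

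\emph{Step 3 (conclusion).} By \eqref{eq:sizeofU2} and $U_2\subseteq U_i$, we get $|V_i|=|U_i|-|W_i|\ge 2\eta^{1/2}n-\eta^{1/2}n=\eta^{1/2}n$, so $V_i$ is not tiny. Each $y\in V_i$ has $d_D(y)\ge\delta n$. By averaging the bound $e_D(V_i,V(D)\setminus U_i)\le 4\eta n^2$ over $V_i$, some $y\in V_i$ has at most $4\eta^{1/2}n$ out-neighbours outside $U_i$. Therefore
\[ |V_i|\ge |U_i|-|W_i|\ge d_D(y)-4\eta^{1/2}n-\eta^{1/2}n\ge(\delta-5\eta^{1/2})n\ge(1/2+\eps/2)n, \]
using $\delta\ge1/2+\eps$ and $\eta\ll\eps$. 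This contradicts the failure of \ref{Ylarge}.

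I expect the main obstacle to be Step 1, specifically the handling of edges whose heads land in $V_i$. These cannot be bounded by in-degree thresholds, since the in-degrees there can be large. The trick is the symmetry \eqref{eq:Xi-Yi}, which trades edges $W_i\to V_i$ for edges $V_i\to W_i$ that are already counted among the out-edges of $V_i$. If the bookkeeping with the $\eta n^2$ error terms from $U_{i+1}\setminus U_i$ and $V_{i+1}\setminus V_i$ turns out tighter than expected, I would shrink $\eta$ relative to $\rho$ and $\eps$.
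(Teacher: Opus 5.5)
Your argument is correct. It uses the same ingredients as the paper: the in-degree thresholds defining $U_{i+1}$ and $V_{i+1}$, together with \eqref{eq:sizeofU2}, \eqref{eq:outdeginEi}, \eqref{eq:edgesfromYi} and the symmetry \eqref{eq:Xi-Yi}. What differs is how you organise them.

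The paper first turns the failure of \ref{Ueta} into the per-vertex statement \eqref{eq:deginsideUi}, and then splits into two cases.
\begin{itemize}
\item If $|W_i|\ge\eta^{1/3}n$, it double counts only $e_{D_i}(U_i,W_i)$. The $V_{i+1}$ threshold gives the upper bound; \eqref{eq:Xi-Yi} and \eqref{eq:outdeginEi} give the lower bound.
\item If $|W_i|\le\eta^{1/3}n$, it uses \eqref{eq:edgesfromYi} to show that every vertex of $V_i$ sends many edges out of $U_i$, contradicting \eqref{eq:deginsideUi}.
\end{itemize}
You instead double count all $D_i$-edges leaving $U_i$ at once. The extra term $e_D(V_i,V(D)\setminus U_i)$ on the left of your inequality is exactly what the paper's second case needs. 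So one inequality does the work of both cases, with slightly cleaner constants ($\eta^{1/2}$ instead of $\eta^{1/3}$ and $\eta^{1/6}$). The paper's count is more local, but it needs the case split to keep the $\eta n^2$ error small relative to $|W_i|n$.

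There is one slip to fix. The sentence ``every edge of $D_i$ starts in $U_i$'' is false. Edges of $D_1$ start in $U_1$, and $U_1\not\subseteq U_2$ in general: membership in $U_2$ requires in-degree at least $\eta n$ in $D_1$, which a vertex of $U_1$ need not have. So $D_i$ may contain edges from $U_1\setminus U_i$ into $V_i$, and your Step~1 upper bound on $e(D_i)$ does not account for them.

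The repair is immediate: run both steps with $e_{D_i}(U_i,V(D))$ in place of $e(D_i)$.
\begin{itemize}
\item The head-based bounds in Step~1 hold a fortiori for edges with tails in $U_i$.
\item The $V_i$-term becomes exactly $e_{D_i}(V_i,V_i)+e_{D_i}(W_i,V_i)$, which is the quantity you bounded via \eqref{eq:Xi-Yi}.
\item Step~2 becomes the identity $e_{D_i}(U_i,V(D))=e_D(V_i,V(D))+e_{D_i}(W_i,V(D))$.
\end{itemize}
With this change the rest of your proof goes through as written.
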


\begin{proofclaim} Let $uv\in E(D)$ and $i\geq 2$. 

We may assume that \cref{Ueta} does not hold, otherwise we are done. Then, by definition of $U_{i+1}$, there are fewer than $\eta n$ vertices $y \in V(D) \setminus U_i$ with $d^-_{D_i}(y) \geq \eta n$ and so $e(D_i[U_i, V(D) \setminus U_i])<\eta n^2 + \eta n^2=2\eta n^2$. Therefore, 
\begin{equation}\label{eq:deginsideUi}
    \mbox{all but at most }\eta^{1/2} n \mbox{ vertices }w \in U_i \mbox{ satisfy } d^+_{D_i}(w, V(D)\setminus U_i) < 2\eta^{1/2} n.
\end{equation}

First suppose that $|W_i| \geq \eta^{1/3}n$.
We may assume that \cref{Yeta} does not hold, for otherwise we are done. Then, by definition of $V_{i+1}$, there are fewer than $\eta n$ vertices $w\in W_i$ such that $d_{D_i}^-(w) \geq (\mu + \eta)n$. Thus, we have, on the one hand, that
\begin{align*}
    e_{D_i}(U_i, W_i) &< \eta n^2 + |W_i|(\mu + \eta)n  \leq |W_i|(\mu + 2\eta^{2/3})n\leq |W_i|(\delta - \mu - \rho/3)n .
\end{align*}
On the other hand, since $W_i = U_i \setminus V_i$, we have
\begin{align*}
        e_{D_i}(U_i, W_i) &= e_{D_i}(W_i, W_i) + e_{D_i}(V_i, W_i) \stackrel{\eqref{eq:Xi-Yi}}{\geq} e_{D_i}(W_i, W_i) + e_{D_i}(W_i, V_i)\\ 
        &= e_{D_i}(W_i, U_i)     \stackrel{\eqref{eq:outdeginEi},\eqref{eq:deginsideUi}}{\geq} (|W_i| - \eta^{1/2}n)(\delta - \mu - \rho/4 - 2\eta^{1/2})n \\
        &\geq (1- \eta^{1/6}) |W_i|(\delta - \mu - \rho/4 - 2\eta^{1/2})n
        \geq |W_i|(\delta - \mu - \rho/3)n ,
\end{align*}
a contradiction.

We may therefore assume that $|W_i| \leq \eta^{1/3}n$. 
We may also assume that \cref{Ylarge} does not hold, for otherwise we are done.
Together with \eqref{eq:edgesfromYi}, this implies that for each $w\in V_i\subseteq U_i$,
\begin{align*}
d^+_{D_i}(w, V(D) \setminus U_i) & = d^+_D(w, V(D)\setminus (V_i \cup W_i)) \geq \delta n - |V_i| - |W_i| \\
& \geq \delta n-(1/2 + \eps/2)n-\eta^{1/3}n\geq 2\eta^{1/2}n.
\end{align*}
But
\[|V_i| = |U_i \setminus W_i| \stackrel{\eqref{eq:sizeofU2}}{\geq} 2\eta^{1/2}n - \eta^{1/3}n > \eta^{1/2}n,\]
a contradiction to \eqref{eq:deginsideUi}.
This completes the proof of the claim. 
\end{proofclaim}

\Cref{clm:growth} implies that, for each $uv\in E(D)$, we have $|V_t^{uv}| \geq (1/2 + \eps/2)n$, as desired. 
Indeed, otherwise we have
$2n\geq |U^{uv}_t| + |V_t^{uv}| \geq (t-2) \eta n > \frac{2}{\eta} \cdot \eta n = 2n$,
a contradiction.
This completes the proof of \cref{lem:connecting}.\end{proof}

\section{Absorbing lemma}\label{sec:absorbing}

We will now prove an absorbing lemma. The idea is that we can set aside a small set of vertices such that given a compatible linear forest without too many components, we can glue together the paths of the linear forest into a compatible Hamilton cycle using the reserved vertices.

\begin{lemma} \label{lem:absorbing}
Let $1/n \ll \beta \ll \alpha \ll \rho, \eps$.
Let $\delta \ge 1/2+ \eps$ and $\mu \coloneqq (1- \rho) \delta/2$. 
Suppose that $G$ is an $n$-vertex graph with $\delta(G) \ge \delta n$ and $\mathcal{F}$ is a $\mu n$-bounded incompatibility system for~$G$. 
Then there exists~$A\subseteq V(G)$ with $|A|\leq \alpha n$ such that, given any compatible linear forest~$H$ in $G \setminus A$ with at most $\beta n$ components, there exists an $\mathcal{F}$-compatible cycle with vertex set $A \cup V(H)$.
\end{lemma}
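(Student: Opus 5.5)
We set aside a random reservoir from which we can both join up the paths of $H$ and absorb every reserved vertex into the final cycle.

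\textbf{Step 1: the reservoir.} Choose a set $A_0\subseteq V(G)$ uniformly at random with $|A_0|=\alpha n/2$. By \cref{lm:chernoff}, with high probability every vertex has about $\delta|A_0|$ neighbours in $A_0$. Moreover, every edge $e$ at a vertex $v$ has at most about $\mu|A_0|$ edges into $A_0$ that are incompatible with it at $v$. So $G[A_0]$, together with edges into it, inherits the minimum degree and boundedness hypotheses up to negligible error. This lets us run \cref{lem:connecting} inside $G[A_0 \cup \{\text{endpoints}\}]$: its proof only uses $\delta(G)\ge\delta n$ and $\mu n$-boundedness, which transfer to the reservoir with slightly worse constants (absorbed by $\rho,\eps$).

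\textbf{Step 2: absorbing the leftover reservoir vertices.} A connecting path generally does not use all of $A_0$, so $A_0$ must also be absorbable. The plan is to build, before seeing $H$, a single compatible path $Q$ through all of $A_0$ with the absorbing property.

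\begin{enumerate}
\item Pick a random set $R\subseteq A_0$ of size $\sqrt{\alpha}\,|A_0|$, or more generally a flexible set.
\item Cover $A_0\setminus R$ by a compatible path $Q_0$, using \cref{lem:connecting} repeatedly with a forbidden set $S$ that grows as we go. Each step uses $O(\ell)$ vertices; we need $|S|\le\alpha' n$, so the paths must draw their interior vertices from $R$.
\item For each reserved vertex $x\in R$, build a small absorber: a compatible path segment $P_x$ of $Q$ and an alternative compatible segment $P'_x$ with the same endpoints and end-edges, with $V(P'_x)=V(P_x)\cup\{x\}$. Concretely, take an edge $ab$ with $a,b\in N(x)$ such that $axb$, $ab$ and the neighbouring edges are pairwise compatible.
\end{enumerate}

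Such edges $ab$ are plentiful. $x$ has $\delta n$ neighbours, and for each $a$ the edges $xb$ compatible with $xa$ number at least $(\delta-\mu)n$. The conditions at $a$ and $b$ with respect to the path edges remove at most $O(\mu n)$ further choices. Since $\delta-\mu>\delta/2>1/4$ and $N(a)\cap N(x)$ has size at least $(2\delta-1)n$, some care is needed here. I would instead use the connecting lemma to create absorbers of the form $a\,Q_{ab}\,b$ versus $a\,x\,b$-routes, which avoids needing an explicit triangle.

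\textbf{Step 3: closing up.} Given $H$ with at most $\beta n\ll\alpha n$ paths, connect consecutive paths through unused vertices of $R$ via \cref{lem:connecting}. We take $S$ to be the set of already used vertices, which has size at most $\beta n\ell+|A\setminus R|\le$ the allowed bound. Each connection's end-edges are the terminal edges of the paths, so compatibility at the junctions is guaranteed by the lemma's statement, which includes the given end-edges $x_1x_1'$ and $x_2x_2'$. Finally, connect the last path of $H$ to $Q$ and $Q$ back to the first path. The vertices of $R$ not used by connectors are absorbed by switching $P_x$ to $P'_x$.

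\textbf{Main obstacle.} The main obstacle is Step 2: making absorbers that are compatible in both configurations, and ensuring every $x\in R$ has many vertex-disjoint absorbers despite the incompatibility constraints. I would try first to derive absorbers from the digraph growth argument of \cref{lem:connecting}, namely the sets $V_t^{uv}$ of size above $n/2$. The aim is to show that for every $x$ there are $\Omega(n^{c})$ absorbing structures, and then select a disjoint family by random sampling and \cref{lm:Markov}.
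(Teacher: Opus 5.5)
\textbf{There is a genuine gap: the absorbing gadget, which is the heart of this lemma, is never constructed.} Step~2.3 is left open, as you acknowledge, and neither of your candidates works.

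The triangle gadget needs $b\in N(a)\cap N(x)$ with $\{xa,xb\}\notin\mathcal F$, and this can fail outright when $\delta\le 2/(3+\rho)$. For example, join $x$ to a set $B$ of $\delta n$ vertices spanning a $(2\delta-1)n$-regular graph, add all other edges not at $x$ except inside $B$, and forbid at $x$ every pair $\{xa,xb\}$ with $ab\in E(G[B])$. This system is $\mu n$-bounded, and no usable triangle through $x$ exists.

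The fallback ``$a\,Q_{ab}\,b$ versus $a\,x\,b$'' is not an absorber. The route through $x$ drops the interior of $Q_{ab}$, so $V(P'_x)\ne V(P_x)\cup\{x\}$.

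The missing idea (\cref{lem:absorbinggadget}) is to demand no compatibility inside $N(x)$ at all:
\begin{itemize}
\item Take $y_1,y_2\in N(x)$ with $y_1xy_2$ compatible, and an arbitrary, not necessarily compatible, path $y_1z_1w_1w_2z_2y_2$. This path exists since $\delta>1/2$.
\item At each switch vertex $v\in\{y_i,z_i,w_i\}$, attach a pendant edge $v'v$ that is compatible with both edges that can follow $v$ in the two routes. This is possible because only $2\mu n\le\delta n-\rho n/2$ choices are excluded.
\item Join $z'_2$ to $z'_1$ and $w'_2$ to $y'_2$ by paths $Q_1,Q_2$ from \cref{lem:connecting}, and extend from $w'_1$ by a tail $Q_3$.
\end{itemize}
The two routes $y'_1y_1z_1z'_1Q_1z'_2z_2y_2y'_2Q_2w'_2w_2w_1w'_1Q_3u'u$ and $y'_1y_1xy_2y'_2Q_2w'_2w_2z_2z'_2Q_1z'_1z_1w_1w'_1Q_3u'u$ use the same connecting paths in a different order. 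So they differ exactly by $x$, share end-edges, and every consecutive pair of edges is compatible by construction.

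\textbf{Two further steps fail as written.}

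In Step~2.2 you try to cover a \emph{prescribed} random set $A_0\setminus R$ by one compatible path. \cref{lem:connecting} tolerates an avoidance set of only a tiny fraction of its host graph. Run inside $R$ (or $A_0$), it yields only $o(|A_0|)$ connections, far fewer than the $\Theta(|A_0\setminus R|)$ you need; run in $G$, it leaves $A_0$. The absorber set must instead be defined a posteriori, as whatever the gadgets and their connectors use.

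In Step~3, every connector must lie in $R$, because the cycle must have vertex set exactly $A\cup V(H)$. So you must apply \cref{lem:connecting} inside $G[R\cup\{\text{end vertices}\}]$, which random sampling as in your Step~1 permits. You cannot apply it in $G$ with $S$ the set of used vertices. Single-vertex components of $H$ also need separate handling, since the lemma joins edges.

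With these repairs and the gadget above, your architecture would be a valid alternative: a random reservoir $R$, one dedicated gadget per vertex of $R$ built greedily and chained, and the components of $H$ joined through $R$. It would even dispense with supersaturation.

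The paper avoids a reservoir altogether:
\begin{itemize}
\item The same gadget, with $P_1=y'_1y_1x'_1x_1$, absorbs a whole path with end-edges $x_1x'_1,x_2x'_2$.
\item \cref{lem:randomsubset} and \cref{cor:absorbinggadget} give at least $8\ell\sqrt\beta\, n^{\ell}$ such gadgets for every vertex and every pair of edges.
\item \cref{lem:absorbingset} samples a disjoint family containing $\beta n$ absorbers for each vertex and each pair of edges.
\item The family is chained into a cycle $C$ and $A\coloneqq V(C)$. Each component of $H$ is then swallowed whole by its own absorber, so no leftover vertices ever arise.
\end{itemize}
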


Note that the statement of \cref{lem:absorbing} differs somewhat from the typical absorbing lemma for Hamilton cycles, as seen in e.g.~\cite{rodl2008approximate}: Usually, one reserves an absorber $A$, as well as a separate small set of vertices $R$ called a reservoir. Then, $R$ and the connecting lemma (\cref{lem:connecting}) are first used to glue all the paths in the linear forest $H$ into a long cycle (the connecting step). Finally, $A$ is used to transform this cycle into a Hamilton cycle (the absorbing step). For cosmetic reasons, we merge these two steps into a single statement where the absorber $A$ directly turns any linear forest into a Hamilton cycle.

\bigskip

In this section, we use the following conventions. We assume that all paths are an ordered sequence of vertices, so that $v_1 \dots v_{\ell}$ and $v_{\ell} \dots v_1$ are distinct (when $\ell \ge 2$). 
If $P = v_1 \dots v_\ell$ we write $P^{-1}$ for $v_{\ell} \dots v_1$.

Let $P = v_1 \dots v_\ell$ be a path. 
We say that $v_1v_2$ and $v_{\ell} v_{\ell -1}$ are the \emph{end-edges} of~$P$. 
Note that implicitly there is an ordering on the vertices of $v_1v_2$ and $v_\ell v_{\ell -1 }$ and we will always write these edges in this way with the endpoints of $P$ first.

\bigskip

The proof of the absorbing lemma follows a fairly standard approach. Given a path~$Q$ in the linear forest, an absorbing structure will be itself a compatible path $P$ that can be altered to form a new compatible path with the same end-edges as $P$ that covers all of $V(P) \cup V(Q)$. 
The vertex set $A$ that we set aside will be the vertices of a compatible cycle made by connecting together absorbing paths. Suppose that for each compatible linear forest on $G \setminus A$ we can pair each component $Q$ of the linear forest with a suitable absorbing path $P$ in this cycle. Then by definition we can alter $P$ to a path with the same end-edges that also covers all the vertices of the corresponding path $Q$. Doing this absorption for each component of the linear forest in turn gives a compatible Hamilton cycle on $G$.

We first prove that given any pair of edges (the end-edges for a non-trivial path in the linear forest) or any vertex (for trivial components of the linear forest) there exists an absorbing path. By a supersaturation result we further show that there are many such absorbing paths for every pair of edges and every vertex. Then taking absorbing paths at random and gluing them into a not-too-large cycle gives the result.

\subsection{Absorbing structures}

Let $P$ be a path with end-edges~$v_1v'_1$ and~$v_2v'_2$.
For edges $x_1x'_1, x_2x_2'$, we say that $P$ is an \emph{absorbing path for}~$(x_1x'_1, x_2x_2')$ if $V(P) \cap \{x_1,x'_1, x_2,x_2'\} = \emptyset$, $P$ is compatible and there exist compatible paths $P_1$, $P_2$ such that $V(P_1) \cup V(P_2) = V(P) \cup \{x_1,x'_1,x_2,x_2'\}$, $V(P_1) \cap V(P_2) = \{x_1,x_1'\}\cap \{x_2,x_2'\}$ and each $P_i$ has end-edges $v_iv'_i$ and $x_ix'_i$.

We use these absorbing paths to absorb a path~$Q$ with end-edges $x_1x'_1$ and $x_2x_2'$. 
The condition $V(P_1) \cap V(P_2) = \{x_1,x_1'\}\cap \{x_2,x_2'\}$ is to deal with the case when $Q$ has length~$2$ or fewer. 

\begin{fact} \label{fact:absorb}
Let $G$ be a graph and  $\mathcal{F}$ be an incompatibility system for~$G$. 
Let $Q$ be a compatible path with end-edges $x_1x'_1$ and $x_2x_2'$. 
Let $P$ be an absorbing path for $(x'_1x_1,x'_2x_2)$ with $V(P) \cap V(Q) = \emptyset$.
Then there exists a compatible path~$P^*$ on $V(Q) \cup V(P)$ with the same end-edges as~$P$.
\end{fact}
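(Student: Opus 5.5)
The plan is to build $P^*$ by concatenating three compatible paths: the path $P_1$ supplied by the absorbing property, then $Q$, then $P_2^{-1}$. Consecutive pieces overlap exactly in the end-edges $x_1x_1'$ and $x_2x_2'$, and we glue along these edges.

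\emph{Setup.} Since $P$ is an absorbing path for $(x_1'x_1,x_2'x_2)$, there are compatible paths $P_1,P_2$ with $V(P_1)\cup V(P_2)=V(P)\cup\{x_1,x_1',x_2,x_2'\}$ and $V(P_1)\cap V(P_2)=\{x_1,x_1'\}\cap\{x_2,x_2'\}$. Moreover, $P_i$ has end-edges $v_iv_i'$ and $x_i'x_i$. With the paper's convention that the endpoint is written first, $P_i$ starts at $v_i$ and its last two vertices are $x_i$ then $x_i'$. Thus $P_1=v_1v_1'\dots x_1x_1'$, and hence $P_1^{-1}$ begins with the edge $x_1'x_1$. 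On the other hand, $Q=x_1x_1'\dots x_2'x_2$ begins at $x_1$ with the edge $x_1x_1'$.

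\emph{Gluing.} I therefore define $P^*$ as the walk that traverses $P_1$ up to $x_1$, then all of $Q$ from $x_1$ to $x_2$, and finally $P_2^{-1}$ from $x_2$ onwards. Since $P_2^{-1}=x_2'x_2\dots v_2'v_2$, this last stretch starts with the edge $x_2x_2'$ read backwards.

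More precisely, I remove the final vertex $x_1'$ from $P_1$, since it is reached through $Q$ instead, and similarly remove the initial vertex $x_2'$ from $P_2^{-1}$. Written out, this gives
\[P^*=v_1v_1'\dots x_1\,x_1'\dots x_2'\,x_2\dots v_2'v_2,\]
where the middle section is $Q$. Its end-edges are $v_1v_1'$ and $v_2v_2'$, the same as those of $P$.

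\emph{Vertex set.} The union of the vertex sets is $V(P_1)\cup V(Q)\cup V(P_2)=V(P)\cup V(Q)$.

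\emph{$P^*$ is a path.} I check that no vertex is repeated. We have $V(P_1)\setminus\{x_1,x_1'\}\subseteq V(P)$ and $V(P_2)\setminus\{x_2,x_2'\}\subseteq V(P)$, and $V(P)\cap V(Q)=\emptyset$. Hence the $P$-parts meet $Q$ only in the designated shared vertices. The two $P$-parts are disjoint from each other apart from $\{x_1,x_1'\}\cap\{x_2,x_2'\}$, and those vertices lie on $Q$, where they are used only once. This is exactly what the intersection condition in the definition of an absorbing path was built for.

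\emph{Compatibility.} Compatibility is local: it suffices to check, at each vertex of $P^*$, that its two incident edges in $P^*$ are compatible.
\begin{itemize}
\item At a vertex interior to one of the three pieces, both incident edges are consecutive edges of that piece, which is compatible.
\item At $x_1$, the two edges are the edge of $P_1$ preceding $x_1$ and the edge $x_1x_1'$. These are consecutive in $P_1$, so they are compatible.
\item At $x_1'$, both edges lie in $Q$.
\item The vertices $x_2$ and $x_2'$ are handled symmetrically using $P_2$.
\end{itemize}

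\emph{Main obstacle.} The only delicate part is degenerate short $Q$: length $1$, where $x_1=x_2'$ and $x_1'=x_2$, or length $2$, where $x_1'=x_2'$. In these cases the overlap of $P_1$ and $P_2$ is nonempty. I will verify directly that the gluing identifications still produce each vertex exactly once, and that at each shared vertex the pair of incident edges is consecutive in one of $P_1$, $Q$ or $P_2$.
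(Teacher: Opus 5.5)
Your proposal is correct and is the same argument as the paper's, which simply sets $P^* = P_1QP_2^{-1}$, gluing along the shared end-edges $x_1x_1'$ and $x_2'x_2$. Your vertex-disjointness argument and your local compatibility check at $x_1, x_1', x_2', x_2$ just spell out what the paper leaves implicit, and they already cover the degenerate cases where $Q$ has length $1$ or $2$.
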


\begin{proof}
Let $v_1v'_1$ and~$v_2v'_2$ be end-edges of~$P$. 
By the definition of~$P$, there exists compatible paths $P_1$, $P_2$ such that $V(P_1) \cup V(P_2) = V(P) \cup \{x_1,x'_1,x_2,x_2'\}$, $V(P_1) \cap V(P_2) = \{x_1,x'_1\} \cap \{x_2,x_2'\}$ and each $P_i$ has end-edges $v_iv'_i$ and $x'_ix_i$.
We are done by setting $P^* = P_1QP_2^{-1}$.
\end{proof}

Similarly for a vertex~$x$, we say that $P$ is an \emph{absorbing path for}~$x$ if $x \notin V(P)$, $P$ is compatible and there exists a compatible path~$Q$ such that $V(Q) = V(P) \cup \{x\}$ and $Q$ has the same end-edges as~$P$.
The proof of the following lemma is inspired by~\cite[Section~4]{LoPatel}.

\begin{lemma} \label{lem:absorbinggadget}
Let $1/n \ll 1/\ell \ll \rho, \eps$.
Let $\delta \ge 1/2+ \eps$ and $\mu \coloneqq (1- \rho) \delta/2$. 
Suppose that $G$ is an $n$-vertex graph with $\delta(G) \ge \delta n$ and $\mathcal{F}$ is a $\mu n$-bounded incompatibility system for~$G$. 
Then for any $x_1x'_1, x_2x_2'\in E(G)$, there exists an $\mathcal{F}$-compatible absorbing path for~$(x_1x'_1, x_2x_2')$ of length~$3\ell$.
For any $x\in V(G)$, there exists an $\mathcal{F}$-compatible absorbing path for~$x$ of length~$3\ell$.
\end{lemma}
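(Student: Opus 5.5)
The plan is to build both absorbing paths out of a small local gadget, and then use the connecting lemma (\cref{lem:connecting}) to pad and glue so that the path has length exactly $3\ell$. (Length exactly $3\ell$ needs some care. Below I make all connections with paths of controlled length and adjust the length at the end, since \cref{lem:connecting} only gives length at most $\ell$.)

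\emph{Vertex case.} Fix $x$ and choose two neighbours $a,b$ of $x$ such that $\{ax,xb\}\notin\mathcal{F}$. This is possible since $d(x)\ge\delta n>\mu n+1$. Next I want a vertex $c$ together with edges $ac'$, $c'$-side structure such that replacing the edge $ab$ by $axb$ is compatible. The cleanest version is the following. Pick an edge $ab$ of $G$ with $a,b\in N(x)$ such that $\{ax,xb\}\notin\mathcal{F}$. The number of such compatible triangles through $x$ is large: $N(x)$ has size at least $\delta n>n/2$, so for each $a\in N(x)$ it has at least $(2\delta-1)n$ neighbours $b$ in $N(x)$. Among those, at most $\mu n$ have $\{ax,xb\}\in\mathcal{F}$, and I must ensure $(2\delta-1)n>\mu n$.

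This may fail for $\delta$ near $1/2$, so instead I use a longer detour. I take $a,b\in N(x)$ with $\{ax,xb\}\notin\mathcal{F}$, pick a neighbour $a'$ of $a$ and $b'$ of $b$, and let $R$ be a compatible path from $ab'$-side to $ba'$-side obtained from \cref{lem:connecting}.

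More concretely, the absorbing path for $x$ will be $P=a'aRbb'$ extended at both ends, where $R$ is a short compatible path from $a$ to $b$ avoiding $x$. The absorbed path is $Q=a'a\,x\,b\,R^{-1}$... The difficulty is that reversing is not allowed. So I design $P$ as $\dots a' a\, c\, \dots$ with a \emph{switch}: I choose a connecting path $R_1$ from edge $a'a$ to edge $bb''$ and a second one, so that $P=S_1\,a'a\,c\,R_2\,b\,b'\,S_2$, while $Q=S_1\,a'a\,x\,b\,R_2^{-1}c\,\dots$.

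To avoid reversal issues altogether, I use the standard $LoPatel$-style structure. $P = S_1\, u_1 R_1 w_1\, w_2 R_2 u_2\, S_2$, where $u_1 x u_2$ is a compatible cherry, $u_1w_1$-type edges are arranged, and $R_1,R_2$ are connecting paths from \cref{lem:connecting}. The alternative route is $S_1 u_1\, x\, u_2 R_2^{-1} w_2 w_1 R_1^{-1} \dots$. Compatibility of undirected paths is symmetric under reversal, so reversed subpaths are fine; only the junction pairs must be checked. Each junction pair is a constraint excluding at most $\mu n$ choices at a step where I have at least $\delta n-O(1)$ options. Together with the fact that \cref{lem:connecting} lets me prescribe the end-edges (so junctions inside connecting paths are automatically compatible), this gives the gadget. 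The edge-pair case is analogous: $P_1$ and $P_2$ are obtained by splitting $P$ in the middle and routing each half to $x_i'x_i$ through a connecting path. So I construct $P$ as $T_1\,y_1y_1'\,C\,y_2'y_2\,T_2$, where the edges $y_iy_i'$ are chosen so that connecting paths $y_i'\to x_i'x_i$ exist avoiding the rest. Here $P_i$ consists of $T_i$, $y_iy_i'$ and that connecting path, and the middle path $C$ gets absorbed into one side.

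The main obstacle is exact length $3\ell$ together with vertex-disjointness. I would handle disjointness by iteratively applying \cref{lem:connecting} with the forbidden set $S$ being everything already used, which has $O(\ell)\le\alpha n$ vertices. For the length, I would pad each half with connecting paths through extra intermediate edges. Each connection has length between $3$ and $\ell'$ with $\ell'\ll\ell$, so I concatenate enough of them and then adjust the final one by inserting single vertices greedily. Each greedy extension $y\to z$ only needs $z\in N(y)$ compatible with the previous edge and avoiding used vertices, which leaves at least $(\delta-\mu)n-O(\ell)>0$ choices. So greedy extension of a path by arbitrary extra lengths is always possible, and the total length can be tuned to exactly $3\ell$.
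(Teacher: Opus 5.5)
\textbf{There is a genuine gap: the proposal never produces a gadget that satisfies the definition of an absorbing path.} Your length and disjointness bookkeeping is fine; greedy padding is exactly what the paper does with $Q_3$. The problem is that the structures you do write down fail.

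\emph{Vertex case.} With $P=S_1u_1R_1w_1w_2R_2u_2S_2$, your alternative route $S_1u_1xu_2R_2^{-1}w_2w_1R_1^{-1}\ldots$ runs back to $u_1$ (or its neighbour on $R_1$). Meanwhile $S_2$ hangs off $u_2$, which is already used, so the route cannot finish with the end-edge of $S_2$. Reversing both $R_1$ and $R_2$ while keeping $w_1w_2$ is just reversing one block, so the split achieves nothing. Closing the route up needs a further cross edge, e.g.\ from the $u_1$-end of the reversed block to the first vertex of $S_2$, and you never arrange one.

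\emph{Edge case.} The definition demands $V(P_1)\cup V(P_2)=V(P)\cup\{x_1,x_1',x_2,x_2'\}$, so $P_1,P_2$ may not use connecting paths ``avoiding the rest''. Suppose instead they use only $V(P)$ and $C$ is absorbed wholly into one side. Then the other side must step straight from $y_j'$ to $x_j'$, and the gadget collapses to cutting a single edge $ab$ of $P$ with $a\in A$ and $b\in B$. Here $A\subseteq N(x_1')$ and $B\subseteq N(x_2')$ are the compatible continuations of $x_1x_1'$ and $x_2x_2'$. Such an edge need not exist, since $|A|,|B|$ are only guaranteed to be about $(\delta-\mu)n$. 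For $\delta$ near $1/2$, take $V=A\cup B\cup C$ with $|A|=|B|=(\delta-\mu)n$ and all adjacencies present except $A$--$B$ edges. Make $x_i'\in C$ adjacent only to its own set plus $\mu n$ vertices of $C$, all forbidden by $\mathcal F$ after $x_ix_i'$. This graph has minimum degree at least $\delta n$, yet has no usable cut edge.

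\textbf{Junction counting.} Your claim that every junction constraint leaves $\delta n-O(1)$ options is false at switch vertices. These must lie in common neighbourhoods, of size possibly only $(2\delta-1)n<\mu n$, so no compatibility can be imposed on them directly.

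\textbf{The missing idea (the paper's construction).}
First fix an \emph{arbitrary}, not necessarily compatible, path $y_1z_1w_1w_2z_2y_2$, using only $\delta>1/2$, with each $y_i$ a compatible continuation of $x_ix_i'$ at $x_i'$.
Then choose buffer vertices $y_i',z_i',w_i'$ so that each switch vertex is compatible both in $P$ and in the rerouted path. This excludes at most $2\mu n$ choices and works because $\delta-2\mu=\rho\delta>0$; this is precisely where $\mu<\delta/2$ is used.
Finally, join $z_2z_2'$ to $z_1'z_1$ and $w_2w_2'$ to $y_2'y_2$ by \cref{lem:connecting}.
Then $P=y_1'y_1z_1z_1'Q_1z_2'z_2y_2y_2'Q_2w_2'w_2w_1w_1'Q_3u'u$ and $P_2$ traverse the same blocks in a different order, and $P_1=y_1'y_1x_1'x_1$. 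The vertex case uses the same frame with a compatible cherry $y_1xy_2$.
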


\begin{proof}
Let $x_1x'_1, x_2 x_2'\in E(G)$. 
Since $\delta(G)-\mu n>n/4$, there exist distinct $y_1,y_2\in V(G) \setminus \{x_1,x'_1, x_2,x_2'\}$ such that $x_ix'_iy_i$ is a compatible path in $G$ for each $i\in [2]$.
Since $\delta(G)\geq(1/2 + \eps)n$, there exist distinct $z_1,z_2,w_1,w_2 \in V(G) \setminus \{x_1,x'_1, x_2,x_2',y_1,y_2\}$ such that $y_1z_1w_1w_2z_2y_2$ is a (not necessarily compatible) path in~$G$.
Since $\delta(G)-2\mu n\ge \rho n /2$, there exist distinct $y'_1,y'_2,z'_1,z'_2,w'_1,w'_2\in V(G) \setminus \{x_1,x'_1, x_2,x_2',y_1,y_2,z_1,z_2,w_1,w_2\}$ such that, for all $i \in [2]$,
\begin{align*}
	\{ y'_i y_i, y_i z_i\}, \{ y'_i y_i, y_i x_i'\},
	\{ z'_i z_i, z_i y_i\}, \{ z'_i z_i, z_i w_i\},
	\{ w'_i w_i, w_i z_i\}, \{ w'_i w_i, w_1w_2\} \notin \mathcal{F}.
\end{align*}
Let $S \coloneqq \{x_i,x_i',y_i,y'_i, z_i, z_i', w_i, w'_i : i \in [2]\}$.

We construct our absorbing path as follows (see also Figure~\ref{fig:absorbing}). Apply~\cref{lem:connecting} with $z_2z_2'$ and $z_1z_1'$ playing the roles of $x_1x_1'$ and $x_2x_2'$ to obtain a compatible path $z_2z'_2 Q_1 z'_1 z_1$ of $G$, of length at most $\ell$ and such that $V(Q_1)\cap S=\{z_2',z_1'\}$. Apply \cref{lem:connecting} again with $w_2w_2', y_2y_2'$, and $S\cup V(Q_1)$ playing the roles of $x_1x_1',x_2x_2'$, and $S$ to obtain a compatible path $w_2 w'_2 Q_2 y_2' y_2$ of $G$, of length at most $\ell$ and such that $V(Q_2)\cap (S\cup V(Q_1))=\{w_2',y_2'\}$. 
Since $\delta(G)-\mu n> n/4$, we can greedily extend $w_1w'_1$ into a compatible path~$w_1w_1'Q_3u'u$ of length $3 \ell - (8 + e(Q_1) +e(Q_2))$, where $u u'$ is an arbitrary edge and such that $V(Q_3)\cap (S\cup V(Q_1)\cup V(Q_2))=\{w_1'\}$.

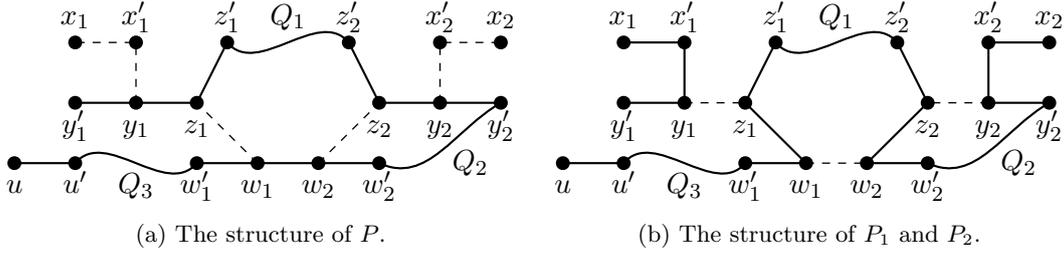
\begin{figure}[ht]
\centering
\subfloat[\label{fig:P}The structure of $P$.]{
\begin{tikzpicture}[scale =.8, line width=2pt]
	
   \foreach \x/\y/\z in {1/0/w, 2/0/{w'}, 2/1/z , 3/1/y , 4/1/{y'}}
			{
			\filldraw[fill=black] ({\x-0.5},\y) circle (2pt);
			\node[anchor=base]  at ({\x-0.5},{\y-0.5}) {$\z_2$};
			\filldraw[fill=black] ({-\x+0.5},\y) circle (2pt);
			\node[anchor=base]  at ({-\x+0.5},{\y-0.5}) {$\z_1$};
			}
   \foreach \x/\y/\z in {1.5/2/{z'}, 3/2/{x'},4/2/x}
			{
			\filldraw[fill=black] ({\x-0.5},\y) circle (2pt);
			\node[anchor=base]  at ({\x-0.5},{\y+0.3}) {$\z_2$};
			\filldraw[fill=black] ({-\x+0.5},\y) circle (2pt);
			\node[anchor=base]  at ({-\x+0.5},{\y+0.3}) {$\z_1$};
			}

		\begin{scope}[line width=0.5pt, dashed]
		\draw (-0.5,0)--(-1.5,1);	
		\draw (-2.5,1)--(-2.5,2)--(-3.5,2);	
		\draw (0.5,0)--(1.5,1);	
		\draw (2.5,1)--(2.5,2)--(3.5,2);	
		\end{scope}
		
		\begin{scope}[thick]
		\draw (-1.5,0)--(1.5,0);	
		\draw (-3.5,1)--(-2.5,1)--(-1.5,1)--(-1,2);	
		\draw (3.5,1)--(2.5,1)--(1.5,1)--(1,2);	
		\end{scope}
		
		\draw[thick] (-1,2) to[out = -50, in = -230] (1,2);
		\node[anchor=base]  at (0,2.3) {$Q_1$};
		
		\draw[thick] (1.5,0) to[out = -25, in = -150] (3.5,1);
		\node  at (3,0) {$Q_2$};

		\filldraw[fill=black] (-3.5,0) circle (2pt);
		\filldraw[fill=black] (-4.5,0) circle (2pt);
		\node[anchor=base]  at (-3.5,-0.5) {$u'$};
		\node[anchor=base]  at (-4.5,-0.5) {$u$};
		\draw[thick] (-3.5,0)--(-4.5,0);
		\draw[thick] (-3.5,0) to[out = 50, in = 230] (-1.5,0);
		\node[anchor=base]  at (-2.5,-0.5) {$Q_3$};

\end{tikzpicture}
}
\subfloat[\label{fig:P1P2}The structure of $P_1$ and $P_2$.]{
\begin{tikzpicture}[scale =.8, line width=2pt]
		
   \foreach \x/\y/\z in {1/0/w, 2/0/{w'}, 2/1/z , 3/1/y , 4/1/{y'}}
			{
			\filldraw[fill=black] ({\x-0.5},\y) circle (2pt);
			\node[anchor=base]  at ({\x-0.5},{\y-0.5}) {$\z_2$};
			\filldraw[fill=black] ({-\x+0.5},\y) circle (2pt);
			\node[anchor=base]  at ({-\x+0.5},{\y-0.5}) {$\z_1$};
			}
   \foreach \x/\y/\z in {1.5/2/{z'}, 3/2/{x'},4/2/x}
			{
			\filldraw[fill=black] ({\x-0.5},\y) circle (2pt);
			\node[anchor=base]  at ({\x-0.5},{\y+0.3}) {$\z_2$};
			\filldraw[fill=black] ({-\x+0.5},\y) circle (2pt);
			\node[anchor=base]  at ({-\x+0.5},{\y+0.3}) {$\z_1$};
			}
		\begin{scope}[thick]
		\draw (-1.5,0)--(-0.5,0)--(-1.5,1)--(-1,2);	
		\draw (-3.5,1)--(-2.5,1)--(-2.5,2)--(-3.5,2);	
		\draw (1.5,0)--(0.5,0)--(1.5,1)--(1,2);	
		\draw (3.5,1)--(2.5,1)--(2.5,2)--(3.5,2);	
		\end{scope}
		
		\begin{scope}[line width=0.5pt, dashed]
		\draw (-.5,0)--(.5,0);	
		\draw (-2.5,1)--(-1.5,1);	
		\draw (2.5,1)--(1.5,1);	
		\end{scope}
		
		\draw[thick] (-1,2) to[out = -50, in = -230] (1,2);
		\node[anchor=base]  at (0,2.3) {$Q_1$};
		
		\draw[thick] (1.5,0) to[out = -25, in = -150] (3.5,1);
		\node  at (3,0) {$Q_2$};

		\filldraw[fill=black] (-3.5,0) circle (2pt);
		\filldraw[fill=black] (-4.5,0) circle (2pt);
		\node[anchor=base]  at (-3.5,-0.5) {$u'$};
		\node[anchor=base]  at (-4.5,-0.5) {$u$};
		\draw[thick] (-3.5,0)--(-4.5,0);
		\draw[thick] (-3.5,0) to[out = 50, in = 230] (-1.5,0);
		\node[anchor=base]  at (-2.5,-0.5) {$Q_3$};
\end{tikzpicture}
}
\caption{An absorbing path for $(x_1x_1',x_2x_2')$.}
\label{fig:absorbing}
\end{figure}

Let 
\begin{align*}
P \coloneqq y'_1 y_1 z_1 z_1' Q_1 z'_2 z_2 y_2 y_2' Q_2 w'_2 w_2 w_1w_1' Q_3 u' u,
\end{align*} 
as in Figure~\ref{fig:absorbing}(a).
Note that $P$ is a compatible path of length~$3\ell$ with end-edges~$y'_1y_1$ and~$u u'$.
Then,
\begin{align*}
P_1 \coloneqq y'_1 y_1 x'_1 x_1
\qquad \text{ and } \qquad
P_2 \coloneqq uu' Q_3^{-1} w_1'w_1z_1 z'_1 Q_1^{-1} z_2' z_2 w_2 w_2' Q_2^{-1} y'_2y_2 x'_2x_2
\end{align*}
(see Figure~\ref{fig:absorbing}(b)) witness that $P$ is an absorbing path for~$(x_1x'_1, x_2x_2')$.

Let $x \in V(G)$. An absorbing path for~$x$ can be constructed similarly. 
Let $y_1, y_2\in N_G(x)$ be distinct such that $y_1xy_2$ is compatible.
Choose distinct $z_1,\allowbreak z_2,\allowbreak w_1,\allowbreak w_2,\allowbreak y_1', \allowbreak y_2',\allowbreak z_1',\allowbreak z_2',\allowbreak w_1',\allowbreak w_2'\in V(G)$ satisfying the same properties as above, with $x$ playing the role of $x_1'$ and $x_2'$.
Using \cref{lem:connecting}, we can construct vertex-disjoint paths $Q_1,Q_2,Q_3$ with the same properties as above. Then,
\begin{align*}
Q \coloneqq y'_1 y_1 x y_2 y'_2 Q_2 w_2' w_2 z_2 z_2' Q_1 z'_1z_1 w_1w_1' Q_3u' u
\end{align*}
witnesses that
\begin{align*}
P \coloneqq y'_1 y_1 z_1 z_1' Q_1 z'_2 z_2 y_2 y_2' Q_2 w'_2 w_2 w_1w_1' Q_3 u' u
\end{align*} 
is an absorbing path for $x$.
\end{proof}

\subsection{Supersaturation}

By examining the proofs of Lemmas~\ref{lem:absorbinggadget} and~\ref{lem:connecting}, one could deduce that there are many absorbing paths, see Corollary~\ref{cor:absorbinggadget} for the statement.
However, counting the number of ways we have for constructing (absorbing) paths in those lemmas is cumbersome, so we use the following result instead, whose proof is based on a result of Mubayi and Zhao~\cite[Lemma 2.1]{MubayiZhao}.

\begin{lemma} \label{lem:randomsubset}
Let $0 <1/n \ll 1/m \ll \eps \ll  \mu, 1/s $, and let $\delta\geq \mu/2$.
Suppose that $G$ is an $n$-vertex graph with $\delta(G) \ge \delta n$ and $\mathcal{F}$ is a $\mu n$-bounded incompatible system for~$G$. 
Let $S \subseteq V(G)$ be such that $|S| =s$. 
Then, there are at least $\binom{n-s}{m-s}/2$ sets $M \subseteq V(G) \setminus S$ of size $m-s$ such that $\delta (G[M \cup S]) \ge (\delta - \eps) m$ and $\mathcal{F}[M \cup S]$ is a $(\mu+\eps)m$-bounded incompatibility system for~$G[M \cup S]$.
\end{lemma}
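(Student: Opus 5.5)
We choose $M$ uniformly at random among all $(m-s)$-subsets of $V(G)\setminus S$ and show that it fails each of the two required properties with probability less than $1/4$. Then at least half of the $\binom{n-s}{m-s}$ sets are good. Since $s$ is fixed and $1/m\ll\eps$, the vertices of $S$ contribute at most $s\le \eps m/2$ to any degree count. It therefore suffices to control neighbourhoods inside $M$.

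\textbf{Minimum degree.} Fix $v\in V(G)$. Then $|N_G(v)\setminus S|\ge \delta n-s\ge(\delta-\eps/4)(n-s)$. The quantity $|N_G(v)\cap M|$ is hypergeometric with mean at least $(\delta-\eps/4)(m-s)$. By \cref{lm:chernoff} (or \cref{cor:chernoff2}), we get
\[\mathbb{P}\bigl(|N_G(v)\cap M|<(\delta-\eps/2)(m-s)\bigr)\le 2e^{-c\eps^2 m}\]
for some absolute constant $c>0$. We need this only for vertices $v\in M\cup S$. For $v\in S$ there are only $s$ events. For $v\notin S$, we condition on $v\in M$: then $M\setminus\{v\}$ is a uniform $(m-s-1)$-subset of the remaining vertices, and the same bound holds. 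Taking expectations, the expected number of vertices $v\in M\cup S$ with $d_{G[M\cup S]}(v)<(\delta-\eps)m$ is at most $m\cdot 2e^{-c\eps^2m}$, which is far below $1/8$ because $1/m\ll\eps$. By Markov's inequality (\cref{lm:Markov}), the probability that any such vertex exists is below $1/8$. The degree in $G[M\cup S]$ also absorbs the $\pm s$ and $\pm1$ errors, since $s\ll\eps m$.

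\textbf{Boundedness of $\mathcal{F}$.} For a vertex $v$ and an edge $e\ni v$, let $B(e,v)$ be the set of $w$ with $\{e,vw\}\in\mathcal{F}$, so $|B(e,v)|\le\mu n$. We need $|B(e,v)\cap (M\cup S)|\le(\mu+\eps)m$ whenever $e\subseteq M\cup S$. For an upper tail with an upper bound on the mean, we enlarge $B(e,v)$ arbitrarily to a set of size exactly $\lceil\mu n\rceil$ and apply \cref{lm:chernoff}. Here $\mu$ is a constant with $\eps\ll\mu$, so the relative deviation $\eps/(2\mu)$ is at most $1$. This gives failure probability at most $2e^{-c'\eps^2 m/\mu}$, after conditioning on the at most two vertices of $e$ lying in $M$. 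The number of relevant pairs $(e,v)$ with $e\subseteq M\cup S$ is at most $m^2$. Hence the expected number of bad pairs is at most $2m^2e^{-c'\eps^2m/\mu}<1/8$, and Markov's inequality again finishes this part.

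\textbf{Main obstacle.} The main point needing care is that the set of edges of $G[M\cup S]$ is itself random. We cannot take a union bound over all pairs $(e,v)$ in $G$: there are about $n^2$ of them, while each failure probability is only $e^{-\Theta(m)}$ with $m$ constant. The fix is to bound the expected number of bad vertices and pairs lying inside $M\cup S$, conditioning on the endpoints being in $M$. This keeps the count polynomial in $m$ rather than in $n$.

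Combining the two parts, $M$ fails either property with probability less than $1/4<1/2$. So at least $\binom{n-s}{m-s}/2$ choices of $M$ are good.
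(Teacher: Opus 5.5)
Your proof is correct and is essentially the paper's argument. The paper also bounds, for each vertex and each ordered edge, the number of sets $M$ for which it lies in $M\cup S$ and is bad; the factors $\binom{n-s-1}{m-s-1}$ and $\binom{n-s-2}{m-s-2}$ there do exactly what your conditioning on membership in $M$ does, and the only cosmetic difference is that for the boundedness of $\mathcal{F}$ the paper applies the lower-tail \cref{cor:chernoff2} to the set $Y(uv)$ of vertices $w$ with $\{uv,uw\}\notin\mathcal{F}$, rather than padding the incompatible set to size $\lceil \mu n\rceil$ and using an upper tail.
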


\begin{proof}
We say that $M$ is \emph{bad for} a vertex $x$ if $x \in M \cup S$ and $d_G(x,M \cup S) \le (\delta - \eps)m$.
Let $x\in V(G)$. Note that $d_G(x,V(G)\setminus S) \ge \delta n - s \ge (\delta- \eps/4) (n-s)$. 
By Corollary~\ref{cor:chernoff2} (applied with $n-s, N_G(x), \delta-\varepsilon/2, \varepsilon/4, m - |\{x\} \cup  S|$ playing the roles of $n, Y, \alpha, \lambda,m$), we have 
\begin{align*}
	& \left| \left\{ M \in \binom{V(G)\setminus S}{m-s} \colon \text{$M$ is bad for $x$} \right\} \right| \\
    & \qquad  \le 
    \left| \left\{ M \in \binom{V(G)\setminus (\{x\} \cup S)}{m-|\{x\} \cup  S|} \colon d_G(x,M) \le (\delta - \eps)m \right\} \right| 
	\\ 
    & \qquad  \le 
    \left| \left\{ M \in \binom{V(G)\setminus (\{x\} \cup S)}{m-|\{x\} \cup  S|} \colon d_G(x,M) \le (\delta - \eps/2)(m-|\{x\} \cup  S|) \right\} \right| 
	\\ 
    &\qquad \le 
	\begin{cases}
	2\binom{n-s}{m-s} e^{-(\eps/4)^2 (m-s) /3} 
			& \text{if $x \in S$}\\
	2\binom{n-s-1}{m-s-1} e^{-(\eps/4)^2 (m-s-1) /3} 
			& \text{if $x \in V(G) \setminus S$}
	\end{cases}\\
    &\qquad \leq \dbinom{n-s}{m-s}\frac{1}{12} \cdot \begin{cases}
	\frac{1}{s} 
			& \text{if $x \in S$}\\
	\frac{1}{n}
			& \text{if $x \in V(G) \setminus S$}.
	\end{cases}
\end{align*}

Consider an (ordered) edge~$uv$ in~$G$.
Let $Y(uv)$ be the set of vertices $w \in V(G)\setminus (S \cup \{u,v\})$ such that $\{uv,uw\} \notin \mathcal{F}$, 
so $|Y(uv)| \ge (1- \mu) n -s-2 \ge (1-\mu -\eps/4) (n-s)$.  
We say that $M$ is \emph{bad for}~$uv$ if $u,v \in M \cup S$ and $|(M \cup S) \cap Y(uv)| \le (1- \mu-\eps)m$.
If $M$ is not bad for~$uv$, then there are at most $(\mu + \eps )m$ vertices $w \in M \cup S$ such that $\{uv,uw\} \in \mathcal{F}$. 
Similarly, by Corollary~\ref{cor:chernoff2} (applied with $n-s, Y(uv), 1-\mu-\varepsilon/2, \varepsilon/4,m-|\{u,v\} \cup S|$ playing the roles of $n, X, \alpha, \lambda,m$), we have
\begin{align*}
	&\left| \left\{M \in \binom{V(G)\setminus S}{m-s} : \text{$M$ is bad for $uv$} \right\} \right|
	\\
    & \qquad\le 
    \left| \left\{M \in \binom{V(G)\setminus (\{u,v\} \cup S)}{m-|\{u,v\} \cup S|} : |M  \cap Y(uv)| \le (1- \mu-\eps/2)(m-|\{u,v\} \cup S|) \right\} \right|
	\\
    &\qquad\le
		\begin{cases}
	2\binom{n-s}{m-s} e^{-(\eps/4)^2 (m-s) /3} 
			& \text{if $|\{u,v\} \cap S | =2$}\\
	2\binom{n-s-1}{m-s-1} e^{-(\eps/4)^2 (m-s-1) /3} 
			& \text{if $|\{u,v\} \cap S | =1$}\\
	2\binom{n-s-2}{m-s-2} e^{-(\eps/4)^2 (m-s-2) /3}
			& \text{if $|\{u,v\} \cap S | = 0$}
	\end{cases}
    \\
    &\qquad\le
		\binom{n-s}{m-s} \frac{1}{12} \cdot \begin{cases}
	\frac{1}{s^2} 
			& \text{if $|\{u,v\} \cap S | =2$}\\
	\frac{1}{sn} 
			& \text{if $|\{u,v\} \cap S | =1$}\\
	\frac{1}{n^2}
			& \text{if $|\{u,v\} \cap S | = 0$}.
	\end{cases}
\end{align*}
Therefore, the number of $M\in \binom{V(G)\setminus S}{m-s}$ which are not bad for any vertex or edge is at least
\begin{align*}
	\left( 1 - s \cdot\frac{1}{12s} - n \cdot\frac{1}{12n} - s^2 \cdot\frac{1}{12s^2} - 2sn \cdot\frac{1}{12sn} - n^2 \cdot \frac{1}{12n^2}\right) \binom{n-s}{m-s} \geq \frac{1}{2}\binom{n-s}{m-s} .
\end{align*}
For each such $M$, we have $\delta(G[M\cup S]) \ge (\delta - \eps)m$ and $\mathcal{F}[M \cup S]$ is $(\mu + \eps) m$-bounded. 
\end{proof}

Using Lemma~\ref{lem:randomsubset}, we now prove a supersaturation version of Lemma~\ref{lem:absorbinggadget}.
For a vertex $x \in V$, let $\mathcal{L}_{\ell}(x)$ denote the set of absorbing paths for~$x$ of length~$\ell$.
For edges $x_1x'_1, x_2x_2'$, let $\mathcal{L}_{\ell}(x_1x'_1, x_2x_2')$ denote the set of absorbing paths for~$(x_1x'_1, x_2x_2')$ of length~$\ell$.

\begin{corollary} \label{cor:absorbinggadget}
Let $1/n \ll \beta \ll 1/\ell \ll \rho, \eps$ and suppose $3$ divides~$\ell$.
Let $\delta \ge 1/2+ \eps$ and $\mu \coloneqq  (1- \rho) \delta/2$. 
Suppose that $G$ is an $n$-vertex graph with $\delta(G) \ge \delta n$ and $\mathcal{F}$ is a $\mu n$-bounded incompatibility system for~$G$. 
Then, for any~$x\in V(G)$ and~$x_1x'_1, x_2x_2'\in E(G)$, we have $|\mathcal{L}_\ell(x)|, |\mathcal{L}_\ell(x_1x'_1, x_2x_2')| \ge 8\ell \sqrt\beta n^{\ell}$.
\end{corollary}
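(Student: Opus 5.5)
We count absorbing paths by random sampling, using \cref{lem:randomsubset} to pass to many small subgraphs and \cref{lem:absorbinggadget} to find one absorbing path inside each. Fix $x$, or the two edges $x_1x'_1,x_2x'_2$, and let $S$ be the set of these fixed vertices, so $s=|S|\le 4$. Choose a constant $m$ with $1/\ell$-dependent hierarchy $\beta \ll 1/m \ll 1/\ell$, and fix an auxiliary $\eps'$ with $1/m\ll\eps'\ll\rho,\eps$. By \cref{lem:randomsubset} (applied with $\eps'$ in the role of $\eps$), at least $\binom{n-s}{m-s}/2$ sets $M\subseteq V(G)\setminus S$ of size $m-s$ are \emph{good}: $G'\coloneqq G[M\cup S]$ has $\delta(G')\ge(\delta-\eps')m$ and $\mathcal{F}[M\cup S]$ is $(\mu+\eps')m$-bounded.

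For each good $M$ we check that $G'$ meets the hypotheses of \cref{lem:absorbinggadget} on $m$ vertices. Set $\delta'\coloneqq\delta-\eps'\ge1/2+\eps/2$. Since $\eps'\ll\rho$, we have $(\mu+\eps')m\le(1-\rho/2)\delta' m/2$. Thus \cref{lem:absorbinggadget}, applied with $\rho/2,\eps/2,\ell/3$ in place of $\rho,\eps,\ell$, gives an $\mathcal{F}$-compatible absorbing path of length $\ell$ for $x$ (respectively for $(x_1x'_1,x_2x'_2)$) inside $G'$. Compatibility in $G'$ with respect to $\mathcal{F}[M\cup S]$ is the same as compatibility in $G$, since the relevant pairs of edges lie inside $M\cup S$. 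Likewise, the witnessing paths $P_1,P_2$ (or $Q$) lie in $G'$, so the path is absorbing in $G$. Its vertex set avoids $S$, so it is contained in $M$.

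Now double count. Each good $M$ yields at least one path in $\mathcal{L}_\ell(\cdot)$ whose vertex set (of size $\ell+1$) lies in $M$. Conversely, a fixed path with $\ell+1$ vertices in $V(G)\setminus S$ lies in at most $\binom{n-s-\ell-1}{m-s-\ell-1}$ sets $M$. Hence
\[
|\mathcal{L}_\ell|\ \ge\ \frac{\binom{n-s}{m-s}}{2\binom{n-s-\ell-1}{m-s-\ell-1}}\ \ge\ \frac{1}{2}\Big(\frac{n-s-\ell}{m}\Big)^{\ell+1}\ \ge\ \frac{n^{\ell+1}}{4m^{\ell+1}},
\]
which exceeds $8\ell\sqrt\beta\, n^{\ell}$ because $\beta\ll1/m,1/\ell$ and $n$ is large.

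The main obstacle is the parameter bookkeeping. First, the hypothesis $\delta\ge\mu/2$ of \cref{lem:randomsubset} holds trivially. Second, and more importantly, the small loss $\eps'$ in degree and in boundedness must be absorbed into the slack $\rho$ and $\eps$, so that \cref{lem:absorbinggadget} still applies in $G'$. This requires the hierarchy $1/m\ll\eps'\ll\rho,\eps$ together with $1/m\ll1/\ell$. Finally, the length must come out as exactly $\ell$, which is why we apply \cref{lem:absorbinggadget} with $\ell/3$ and use the assumption $3\mid\ell$.
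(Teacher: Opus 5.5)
Your proposal is correct and follows essentially the same route as the paper: use \cref{lem:randomsubset} to show that at least half of the $m$-sets containing the fixed vertices induce a subgraph with slightly weaker degree and boundedness parameters, apply \cref{lem:absorbinggadget} (with $\ell/3$) inside each such set, and double count using the number of sets $M$ that contain a fixed path. The only difference is in the constants: the paper takes the loss parameter to be $\rho/8$ (after assuming $\rho\le 4\eps$) and applies the gadget lemma with $\rho/4$, whereas you introduce a separate $\eps'\ll\rho,\eps$ and use $\rho/2$.
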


\begin{proof}
Fix an additional constant $m$ such that $ \beta \ll 1/m \ll 1/\ell$. We may assume without loss of generality that $\rho \leq 4\varepsilon$. 
Let $\delta' \coloneqq \delta - \rho/8$ and $\mu' \coloneqq ( 1- \rho/4)\delta'/2$.
Note that $\delta' \ge 1/2+ \eps/2$ and 
\begin{align*}
	\mu+ \rho/8 
	\le \frac{(1-\rho)\delta'+ \rho/8}{2} + \frac{\rho}8 
	= (1- \rho) \frac{\delta'}2 + \frac{3 \rho}{16} 
	\le (1- \rho) \frac{\delta'}2 + \frac{3 \rho \delta'}{8}
	= \mu'.
\end{align*}
Fix $x \in V(G)$. 
Let $\mathcal{M}$ be the set of $M \in \binom{V(G) \setminus \{x\}}{m-1}$ such that $\delta(G[M \cup \{x\}]) \ge \delta' m$ and $\mathcal{F}[M \cup \{x\}]$ is $\mu' m$-bounded.
By Lemma~\ref{lem:randomsubset}, applied with $x$ and $\rho/8$ playing the roles of $S$ and $\varepsilon$, we have $|\mathcal{M}| \ge \binom{n-1}{m-1}/2$. 
For each $M \in \mathcal{M}$, Lemma~\ref{lem:absorbinggadget} (applied with $G[M\cup \{x\}], \delta', \rho/4, \varepsilon/2$, and $\mu'$ playing the roles of $G, \delta, \rho, \varepsilon$, and $\mu$) implies that there is an absorbing path for~$x$ of length~$\ell$ in $G[M]$ (recall that an absorbing path for~$x$ does not contain $x$).
For any path $P$ of length $\ell$, there are at most $\binom{n-2-\ell}{m-2-\ell}$ sets $M \in \mathcal{M}$ such that $P\subseteq G[M]$.
Hence,
\begin{align*}
	|\mathcal{L}_{\ell}(x)| \ge \frac{|\mathcal{M}| }{\binom{n-2-\ell}{m-2-\ell}} \ge \frac{ \frac12 \binom{n-1}{m-1}}{\binom{n-2-\ell}{m-2-\ell}} = \frac12 \frac{n-1}{m-1}\frac{ \binom{n-2}{\ell} }{ \binom{m-2}{\ell} } \ge 8\ell \sqrt \beta n^{\ell}.
\end{align*}
A corresponding argument implies that for any~$x_1x'_1, x_2x_2'\in E(G)$, we have 
\begin{align*}
	|\mathcal{L}_{\ell}(x_1x'_1, x_2x_2')| \ge \frac{ \frac12 \binom{n-4}{m-4}}{\binom{n-5-\ell}{m-5-\ell}} = \frac12 \frac{n-4}{m-4}\frac{ \binom{n-5}{\ell} }{ \binom{m-5}{\ell} } \ge 8\ell \sqrt \beta n^{\ell},
\end{align*}
as desired.
\end{proof}

\subsection{Proof of Lemma~\ref{lem:absorbing}}

Using a simple random argument, we obtain the following result. 

\begin{lemma} \label{lem:absorbingset}
Let $1/n \ll \beta \ll \alpha \ll 1/\ell \ll \rho, \eps$ and suppose $3$ divides~$\ell$.
Let $\delta \ge 1/2+ \eps$ and $\mu \coloneqq (1- \rho) \delta/2$. 
Suppose that $G$ is an $n$-vertex graph with $\delta(G) \ge \delta n$ and $\mathcal{F}$ is a $\mu n$-bounded incompatibility system for~$G$. 
Then there exists a family $\mathcal{Q}$ of vertex-disjoint $\mathcal{F}$-compatible paths each of length~$\ell$ such that 
\begin{align*}
	| \mathcal{Q}| &\le \alpha n, &
	| \mathcal{Q} \cap \mathcal{L}_{\ell}(x)| &\ge \beta n,&
	| \mathcal{Q} \cap \mathcal{L}_{\ell}(x_1x_1',x_2x_2')| &\ge \beta n
\end{align*}
for all $ x \in V(G)$ and edges $x_1x_1',x_2x_2' \in E(G)$.
\end{lemma}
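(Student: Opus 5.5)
We use the standard random selection argument, with \cref{cor:absorbinggadget} supplying the counts. Let $\mathcal{L}$ be the set of all $\mathcal{F}$-compatible paths of length $\ell$ in $G$, viewed as ordered sequences of $\ell+1$ vertices, so that $|\mathcal{L}|\le n^{\ell+1}$. Choose each $P\in\mathcal{L}$ independently with probability $p\coloneqq \gamma n^{-\ell}$, where $\gamma \coloneqq \sqrt\beta/\ell$; any choice $\beta\ll\gamma\ll\alpha$ with $\gamma\ell\le\sqrt\beta$ will work. Let $\mathcal{Q}_0$ be the resulting random family.

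\textbf{Key quantities.} We control three random variables.
\begin{enumerate}[label=\textnormal{(\roman*)}]
\item The size: $\mathbb{E}|\mathcal{Q}_0|\le \gamma n$. By Chernoff (\cref{lm:chernoff}), $|\mathcal{Q}_0|\le 2\gamma n\le \alpha n$ with probability $1-o(1)$.
\item The coverage: for each $x\in V(G)$ and each pair of edges $x_1x_1',x_2x_2'$,
\[\mathbb{E}|\mathcal{Q}_0\cap\mathcal{L}_\ell(x)|,\ \mathbb{E}|\mathcal{Q}_0\cap\mathcal{L}_\ell(x_1x_1',x_2x_2')| \ge p\cdot 8\ell\sqrt\beta n^{\ell}=8\ell\gamma\sqrt\beta\, n\]
by \cref{cor:absorbinggadget}. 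Since $\ell\gamma=\sqrt\beta$, this lower bound equals $8\beta n$. Chernoff gives at least $4\beta n$ with probability $1-e^{-\Omega(\beta n)}$. There are at most $n+n^4$ objects $x$ or $(x_1x_1',x_2x_2')$, so a union bound covers all of them.
\item The overlaps: let $Z$ count ordered pairs of distinct chosen paths sharing a vertex. There are at most $(\ell+1)^2 n^{2\ell+1}$ such pairs in $\mathcal{L}$, so
\[\mathbb{E}Z\le (\ell+1)^2 n^{2\ell+1}p^2=(\ell+1)^2\gamma^2 n\le 2\beta n.\]
By Markov (\cref{lm:Markov}), $Z\le 3\beta n$ with probability at least $1/3$.
\end{enumerate}

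\textbf{Cleaning.} Fix an outcome in which all three events hold. Delete one path from each intersecting pair, removing at most $3\beta n$ paths, to obtain a vertex-disjoint family $\mathcal{Q}$. Then $|\mathcal{Q}|\le\alpha n$, and every count drops to at least $4\beta n-3\beta n=\beta n$, as required.

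\textbf{Main obstacle.} The delicate point is the bookkeeping of constants. The overlap count $Z$ must be smaller than the coverage lower bound, which is why we fix $\gamma\ell=\sqrt\beta$ exactly: the coverage mean is then linear in $\beta$ with constant $8$, while $\mathbb{E}Z$ is of order $\ell^2\gamma^2 n=\beta n$, and the factor $8\ell\sqrt\beta$ in \cref{cor:absorbinggadget} is tuned for this. If the constants are slightly off, we compensate by choosing $p$ a constant factor smaller and using the slack in the coverage Chernoff bound. A smaller point is that absorbing paths are ordered, so $\mathcal{L}$ should consist of ordered paths, with the orientation convention for end-edges matching the definition of $\mathcal{L}_\ell(\cdot)$. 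A path and its reverse are two different elements of $\mathcal{L}$; both may be selected, but they are then removed in the cleaning step like any other intersecting pair.
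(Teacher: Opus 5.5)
Your strategy is the paper's: sample candidate paths independently, use Chernoff for the size and the coverage counts, use Markov for the number of intersecting pairs, then delete one path per intersecting pair. However, step (ii) is off by a factor of $n$. With $p=\gamma n^{-\ell}$ and the bound $|\mathcal{L}_\ell(x)|\ge 8\ell\sqrt\beta\, n^{\ell}$ as stated in \cref{cor:absorbinggadget}, you get
\[
p\cdot 8\ell\sqrt\beta\, n^{\ell}=8\ell\gamma\sqrt\beta=8\beta ,
\]
which is a constant, not $8\beta n$. So the Chernoff and union-bound step fails, and so does the comparison with $Z\le 3\beta n$.

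Adjusting $p$ by a constant factor, as your last paragraph suggests, does not repair this. Your scaling $p=\Theta(n^{-\ell})$ is forced by your (correct) choice to sample sequences of $\ell+1$ vertices. There are $\Theta(n^{\ell+1})$ compatible paths of length $\ell$ in $G$, and you need $|\mathcal{Q}_0|\le\alpha n$ and $\mathbb{E}Z=O(\beta n)$. What the argument really requires is that absorbing paths have positive density among all such sequences, namely $|\mathcal{L}_\ell(x)|,|\mathcal{L}_\ell(x_1x_1',x_2x_2')|\ge 8\ell\sqrt\beta\, n^{\ell+1}$.

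This stronger count is available, but from the proof of \cref{cor:absorbinggadget} rather than its statement. That proof gives
\[
|\mathcal{L}_\ell(x)|\ge \frac12\cdot\frac{n-1}{m-1}\cdot\frac{\binom{n-2}{\ell}}{\binom{m-2}{\ell}}\ge c(m,\ell)\,n^{\ell+1},
\]
and similarly for pairs of edges. Since $\beta\ll 1/m$, you may take $c(m,\ell)\ge 8\ell\sqrt\beta$. With this, your computation $p\cdot 8\ell\sqrt\beta\, n^{\ell+1}=8\beta n$ is correct, and the rest of your argument goes through: $|\mathcal{Q}_0|\le 2\gamma n\le\alpha n$, $\mathbb{E}Z\le(\ell+1)^2\gamma^2 n\le 2\beta n$, and cleaning leaves at least $\beta n$ absorbers per target.

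For comparison, the paper samples from $V(G)^{\ell}$ with $p=\sqrt\beta\, n^{-(\ell-1)}/\ell$. That matches the stated $n^{\ell}$ count but is off by one vertex relative to paths of length $\ell$. Your $(\ell+1)$-vertex bookkeeping is the right one, but it must be paired with the $n^{\ell+1}$ count. Restricting the sample to compatible paths from the start, rather than discarding non-compatible sequences at the end as the paper does, is a harmless simplification.
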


\begin{proof}
Let $p  \coloneqq \sqrt{\beta} n^{-(\ell-1)}/\ell$.
Let $\mathcal{Q}'$ be a random subset of $V(G)^{\ell}$, where each $S \in V(G)^{\ell}$ is chosen with probability~$p$ independently. 
For each $S=(v_1, \dots, v_\ell) \in V(G)^{\ell}$, we also view $S$ as the path $v_1\dots v_{\ell}$ (which may not exist in~$G$). 
By Corollary~\ref{cor:absorbinggadget}, we have
\begin{align*}
    \mathbb{E} [|\mathcal{Q}' \cap \mathcal{L}_{\ell}(x)|],\, \mathbb{E} [|\mathcal{Q}' \cap \mathcal{L}_{\ell}(x_1x_1', x_2x_2')|]  \ge p \cdot 8\ell  \sqrt \beta n^{\ell} = 8 \beta n 
\end{align*}
for any $x\in V(G)$ and~$x_1x'_1, x_2x_2'\in E(G)$.
By Chernoff's inequality (Lemma~\ref{lm:chernoff}), we have with probability at least $3/4$ that
\begin{align}
	| \mathcal{Q}'| &\le \sqrt{\beta} n \le \alpha n , &
	| \mathcal{Q}' \cap \mathcal{L}_{\ell}(x)| &\ge 3\beta n ,&
	| \mathcal{Q}' \cap \mathcal{L}_{\ell}(x_1x_1',x_2x_2')| &\ge 3\beta n
    \label{eqn:absorbingset}
\end{align}
for any~$x\in V(G)$ and~$x_1x'_1, x_2x_2'\in E(G)$.

We say that two members of $V(G)^{\ell}$ form an \textit{intersecting pair} if they share at least one vertex (not necessarily in the same position). 
Let $Z$ be the number of intersecting pairs in~$\mathcal{Q}'$, so 
$
\mathbb{E} [Z]  \le p^2  \cdot \ell n^{\ell}  \cdot \ell n^{\ell-1}
    \le \beta n.
$
By Markov's inequality (\cref{lm:Markov}), 
$   \mathbb{P}( Z \ge 2\beta n) \le 
    \mathbb{P}( Z \ge 2 \mathbb{E} [Z]) \le  1/2$.
Thus, there exists $\mathcal{Q}' \subseteq V(G)^{\ell}$ such that $Z \le 2\beta n $ and \eqref{eqn:absorbingset} holds. 
We obtain $\mathcal{Q}$ from~$\mathcal{Q}'$ by deleting one member from each interesting pair and those that are not $\mathcal{F}$-compatible paths in~$G$. 
\end{proof}

We now prove Lemma~\ref{lem:absorbing}.

\begin{proof}[Proof of Lemma~\ref{lem:absorbing}]
Let $1/n \ll \beta \ll \alpha' \ll \alpha \ll 1/\ell \ll \rho , \eps$.
Apply Lemma~\ref{lem:absorbingset} with $\alpha'$ playing the role of $\alpha$ to obtain a family $\mathcal{Q}$ of vertex-disjoint $\mathcal{F}$-compatible paths each of length~$\ell$ such that 
\begin{align*}
	| \mathcal{Q}| &\le \alpha' n, &
	| \mathcal{Q} \cap \mathcal{L}_{\ell}(x)| &\ge \beta n,&
	| \mathcal{Q} \cap \mathcal{L}_{\ell}(x_1x_1',x_2x_2')| &\ge \beta n
\end{align*}
for all $ x \in V(G)$ and edges $x_1x_1',x_2x_2' \in E(G)$.

Denote $\mathcal{Q} = \{ Q_1, Q_2, \dots, Q_q\}$ and for each $i\in [q]$ let $x_ix_i'$ and $y_iy_i'$ be the end-edges of $Q_i$. 
For each $i \in [q]$ in turn, we apply Lemma~\ref{lem:connecting}, with $y_i'y_i, x_{i+1}'x_{i+1},$ and $V(\mathcal{Q})\cup V(Q_1')\cup \dots V(Q_{i-1}')$ playing the roles of $x_1x_1',x_2x_2'$, and $S$, to obtain compatible path $y_i'y_iQ_i'x_{i+1}x_{i+1}'$ of length at most~$\ell$ such that $Q_i Q'_i Q_{i+1}$ is a compatible path (with $Q_{q+1}\coloneqq Q_1$). 
Furthermore, $Q_1, Q'_1, \dots, Q_q,Q'_q$ are vertex-disjoint. 
Let $C$ be the compatible cycle $Q_1Q'_1Q_2Q'_2 \dots Q_qQ_q'$ and $A = V(C)$. 
Note that $|A| \le 2 \ell |\mathcal{Q}| \le 2 \ell \alpha' n \le \alpha n$. 

We now show that $A$ has the desired property. 
Consider any compatible linear forest~$H$ in $G \setminus A$ with at most $\beta n$ components.
Let the components of $H$ be vertices $z_1, \dots, z_{s}$ and paths $P_{s+1}, \dots, P_{s+t}$ of length at least~$1$. 
For each $j \in [t]$, let $z_{s+j} z'_{s+j}$ and $w_{s+j} w'_{s+j}$ be the end-edges of~$P_{s+j}$. 
By \cref{lem:absorbingset}, there exist distinct $Q_{i_1}, Q_{i_2}, \dots, Q_{i_{s+t}}\in \mathcal{Q}$ such that $Q_{i_j}$ is an absorbing path for $z_j$ if $j \in [s]$ and for $(z'_{s+j}z_{s+j}, w'_{s+j}w_{s+j})$
if $j \in [s+1, s+t]$. 
For $j \in [s]$, there exists a compatible path~$Q^*_{i_j}$ on $V(Q_{i_j}) \cup z_j$ with the same end-edges as~$Q_{i_j}$. 
For $j \in  [s+1, s+t]$, by Fact~\ref{fact:absorb}, there exists a compatible path~$Q^*_{i_j}$ on $V(Q_{i_j}) \cup V(P_j)$ with the same end-edges as~$Q_{i_j}$. 
Let $C^*$ be obtained from~$C$ by replacing~$Q_{i_j}$ with~$Q^*_{i_j}$ for each $j \in [s+t]$. 
Note that $C^*$ is a compatible cycle on $V(H) \cup A$. 
\end{proof}

\section{Proof of the main theorems}

We are now ready to derive \cref{thm:main,thm:reg}. For clarity, we prove each result separately, although the strategy is identical: We first apply \cref{lem:absorbing} to obtain an absorber $A$ with the property that if $G'\coloneqq G-A$ contains a spanning compatible linear forest with few components, then $G$ contains a compatible Hamilton cycle. To find such a linear forest, we first use \cref{thm:regular_subgraph} (in the case of \cref{thm:main}) or \cref{lm:regularising} (in the case of \cref{thm:reg}) to obtain a dense spanning regular subgraph $G''\subseteq G'$. Finally, we apply \cref{cor:nearspanninglinearforest} to obtain the desired linear forest in $G''$.

\begin{proof}[Proof of \cref{thm:main}]
    Let $ \rho >0$ and $\delta>  1/2$. 
    Let $\varepsilon \coloneqq \delta - 1/2$. 
    Fix additional constants satisfying
    \[0<1/n_0\ll \beta \ll \alpha , \eps' \ll\eps,\rho.\]

    Let $\mu\coloneqq (1-\rho)(\delta + \sqrt{2\delta - 1})/4\leq (1-\rho)\delta/{2}$ and let $G$ be a graph on $n\geq n_0$ vertices of minimum degree at least $\delta n$. Let $\mathcal{F}$ be a $\mu n$-bounded incompatibility system for $G$. 

    Let $A\subseteq V(G)$ be the absorber obtained by applying \cref{lem:absorbing}.
    Let $G'\coloneqq G-A$ and observe that $G'$ is a graph on $n'\geq (1-\alpha)n$ vertices. Since $G'$ has minimum degree $\delta(G')\geq (\delta - \alpha) n$, setting $ \delta' \coloneqq \delta(G')/n'$, we see that $\delta' \ge \delta - \alpha \ge 1/2+\eps'$. Moreover, $\mathcal{F}$ induces a $\mu'n'$-bounded incompatibility system for $G'$, where $\mu'\coloneqq \mu/(1-\alpha)$. Let $G''$ be a largest regular spanning subgraph of $G'$ and let $d'$ be such that $G''$ is $d'n'$-regular. We claim that $G''$ contains a large compatible linear forest.

     We have by \cref{thm:regular_subgraph} that
     \[d'\geq \frac{\delta' -\eps'+ \sqrt{2\delta' - 1}}{2} 
     \ge \frac{\delta - \alpha -\eps'+ \sqrt{2\delta - 2\alpha - 1}}{2}.\]
    Note that $\sqrt{2\delta - 1} - \sqrt{2\delta - 2\alpha - 1} \le \frac{2\alpha}{\sqrt{2\delta -1}} \le \frac{2\alpha}{\sqrt{\eps}}$%
    \COMMENT{$\sqrt{2\delta - 1} - \sqrt{2\delta - 2\alpha - 1} \le \frac{2\alpha}{\sqrt{2\delta -1}}$ is equivalent to $(2\delta-1)-\sqrt{(2\delta-1)^2-2\alpha(2\delta-1)}\leq 2\alpha$. Since \[(2\delta-1)^2-2\alpha(2\delta-1)\geq (2\delta-1)^2-4\alpha(2\delta-1)+4\alpha^2=((2\delta-1)-2\alpha)^2,\]
    we indeed have
    \[(2\delta-1)-\sqrt{(2\delta-1)^2-2\alpha(2\delta-1)}\leq(2\delta-1)-((2\delta-1)-2\alpha)=2\alpha.\]}
    and thus we see that
    \[d'
    \ge 
    \left(1-\frac{\alpha +\eps' + \tfrac{2\alpha}{\sqrt{\eps}}}{\delta + \sqrt{2\delta - 1}}\right)\frac{\delta + \sqrt{2\delta - 1}}{2}
     \ge \left(1-\frac{\rho}{2}\right)\frac{\delta + \sqrt{2\delta - 1}}{2}.\]
    In particular,
    \[\mu'= \frac{1-\rho}{1-\alpha}\cdot\frac{\delta + \sqrt{2\delta - 1}}{4}\leq \frac{1-\rho}{(1-\alpha)\left(1-\tfrac{\rho}{2}\right)}\frac{d'}{2}\leq \left(1-\frac{\rho}{2}\right)\frac{d'}{2}.\]
    Thus, \cref{cor:nearspanninglinearforest} implies that $G''$ contains a spanning $\mathcal{F}$-compatible linear forest which contains at most $n^{32/33}\leq \beta n$ components. By \cref{lem:absorbing}, $G$ has an $\mathcal{F}$-compatible Hamilton cycle.
\end{proof}

\begin{proof}[Proof of \cref{thm:reg}]
    Let $\eps, \rho >0$. 
    Fix additional constants satisfying
    \[0< 1/n_0\ll \beta \ll \alpha ,\gamma \ll \eps' \ll \eps,\rho.\]
    
    Let $d \ge 1/2+ \eps$ and $\mu\coloneqq (1-\rho)d/2\leq (1-\rho/2)(d-\gamma)/{2}$, let $G$ be a graph on $n\geq n_0$ vertices and suppose that $(d-\gamma)n\leq d_G(v)\leq (d +\gamma)n$ for each $v\in V(G)$. Let $\mathcal{F}$ be a $\mu n$-bounded incompatibility system for $G$. 

    Let $A\subseteq V(G)$ be the absorber obtained by applying \cref{lem:absorbing} with $d-\gamma, \rho/2$, and $\varepsilon'$ playing the roles of $\delta, \rho$, and~$\varepsilon$, respectively.
    Let $G'\coloneqq G-A$ and observe that $G'$ is a graph on $n'\geq (1-\alpha)n$ vertices. Since $G'$ has minimum degree $\delta(G')\geq (d-\gamma - \alpha) n$, setting $ \delta' \coloneqq \delta(G')/n'$, we see that $\delta' \ge d -\gamma - \alpha \ge 1/2+\eps'$. Moreover, $\mathcal{F}$ induces a $\mu'n'$-bounded incompatibility system for $G'$, where $\mu'\coloneqq \mu/(1-\alpha)$. Let $G''$ be a largest regular spanning subgraph of $G'$ and let $d'$ be such that $G''$ is $d'n'$-regular. We claim that $G''$ contains a large compatible linear forest.

    Observe that  $G'$ has 
    maximum degree $\Delta(G) \le (d+\gamma)n \leq (1+2\alpha)(\delta'+2\gamma+\alpha)n' \leq (\delta'+\eps'^2/5)n'$. Thus \cref{lm:regularising} implies that $d'\geq \delta' -\eps' \ge d - \gamma- \alpha  - \eps' \ge (1-3\eps')d$.
    In particular,  
    \[\mu'= \frac{(1-\rho)}{(1-\alpha)}\frac{d}{2}
    \leq \frac{(1-\rho)}{(1-\alpha)(1-3\eps')}\frac{d'}{2}
    \leq \left(1-\frac{\rho}{2}\right)\frac{d'}{2}\] 
    and so we can apply \cref{cor:nearspanninglinearforest} to obtain a spanning compatible linear forest in $G''$ containing at most $n^{32/33}\leq \beta n$ components. By \cref{lem:absorbing}, $G$ has an $\mathcal{F}$-compatible Hamilton cycle.   
\end{proof}

\section{Open Problems}\label{sec:open_problems}

The  most pertinent open questions remain~\cref{q:main} and the following generalisation to $\mu$ as a function of $\delta$:
\begin{question}\label{q:more_general}
    What is the value of the function $\mu(\delta)$ given by Definition~\ref{def:mu}?
\end{question} 
We anticipate that the answer to~\cref{q:more_general} will be either the upper bounds given by~\cref{prop:upper_bound_reg,prop:upper_bound_nonreg} or the lower bound given by~\cref{thm:main}, though we do not make a guess as to which is correct.

\bigskip
\cref{thm:main,thm:reg} only give asymptotic bounds on $\mu$, and only apply to graphs with minimum degree at least $\left( 1/2+\eps \right)n$. One open problem is whether the $\eps$ and $\rho$ in the statements of these theorems can be removed. This is likely to prove difficult: indeed,~\cref{thm:reg} with $\rho = 0$  and $d = 1$ implies the full Bollob\'{a}s-Erd\H{o}s Conjecture.

As mentioned in the introduction, a different generalisation of Dirac's Theorem to edge-coloured graphs has been to seek rainbow Hamilton cycles, where all edges have distinct colours.
This inspires a generalisation of the local concept of incompatibility systems to a global one. A \emph{global incompatibility system}~$\mathcal{F}$ for~$G$ is a family of `forbidden' pairs of  edges of~$G$ (where the pairs are not necessarily intersecting). As before, we say that a subgraph $H$ in~$G$ is \emph{$\mathcal{F}$-compatible} if there does not exist $e,e' \in E(H)$ such that $\{e,e'\} \in \mathcal{F}$. 
We say a global incompatibility system~$\mathcal{F}$ is \emph{$\Delta$-bounded} if for each edge $e$ there are at most $\Delta$ other edges $e'$ such that $\{e,e'\}\in \mathcal{F}$.

For example, consider an edge-coloured graph~$G$.
If $\mathcal{F}_{\textrm{mono}}'$ consists of \emph{all} pairs of edges of the same colour, then $\mathcal{F}_{\textrm{mono}}'$-compatible is equivalent to rainbow.
Coulson and Perarnau~\cite{CoulsonPerarnau} showed that there exists some sufficiently small $\mu>0$ such that if an edge-coloured graph $G$ with $\delta(G) \ge n/2$  has no colour appearing more than $\mu n$ times, then it contains a rainbow Hamilton cycle. This is equivalent to the statement that if $\mathcal{F}_{\textrm{mono}}'$ is $\mu n$-bounded then $G$ has an $\mathcal{F}_{\textrm{mono}}'$-compatible Hamilton cycle.

\begin{question}\label{q:global}
Does there exist a constant $\mu$ such that every $n$-vertex graph~$G$ with $\delta(G) \ge n/2 $ and a $\mu n$-bounded global incompatibility system~$\mathcal{F}$ has an $\mathcal{F}$-compatible Hamilton cycle?
\end{question} 
A positive answer to this question would generalise both Krivelevich, Lee and Sudakov's result on (local) incompatibility systems as well as Coulson and Perarnau's result on rainbow Hamilton cycles.

\section*{Acknowledgements}

The research leading to this paper was initiated at the Extremal and Probabilistic Combinatorics workshop organised by Shoham Letzter, Richard Montgomery, and Katherine Staden, and hosted by the International Centre for Mathematical Studies (ICMS) in Edinburgh.

While working on this project, Natalie Behague was supported by the European Research Council (ERC) under the European Union Horizon 2020 research and innovation programme (grant agreement No. 947978).
Bertille Granet was supported by the Australian Research Council grant DP240101048.

\bibliographystyle{abbrv}
\bibliography{reference}

\end{document}